\documentclass[reqno,11pt]{amsart}
\usepackage[showonlyrefs]{mathtools}
\usepackage{amsmath,amsthm,amssymb,amsfonts,color,xcolor,pifont,cite,a4wide}
\usepackage[colorlinks]{hyperref}
\usepackage{framed,enumerate}
\usepackage[normalem]{ulem}

\normalem
\newcounter{corr}
\definecolor{violet}{rgb}{0.580,0.,0.827}
\newcommand{\corr}[3]{\typeout{Warning : a correction remains in page
		\thepage}
	\stepcounter{corr}        
	{\color{blue}\ifmmode\text{\,\sout{\ensuremath{#1}}\,}\else\sout{#1}\fi}
	{\color{red}#2}
	{\color{violet} #3}}

\usepackage{pgf,tikz}
\usepackage{mathrsfs,ulem}
\usetikzlibrary{arrows}

\DeclarePairedDelimiter\floor{\lfloor}{\rfloor}

\newcommand{\new}[1]{{\color{red}{#1}}}
\newcommand{\newer}[1]{{\color{blue}{#1}}}
\renewcommand{\newer}[1]{{#1}}
\renewcommand{\new}[1]{{#1}}

\newtheorem{theorem}{Theorem}[section]
\newtheorem{proposition}[theorem]{Proposition}
\newtheorem{corollary}[theorem]{Corollary}
\newtheorem{lemma}[theorem]{Lemma}

\newtheorem{definition}[theorem]{Definition}
\theoremstyle{remark}
\newtheorem{remark}[theorem]{Remark}

\def\LM#1{\hbox{\vrule width.2pt \vbox to#1pt{\vfill \hrule width#1pt height.2pt
}}}
\def\LL{{\mathchoice {\>\LM7\>}{\>\LM7\>}{\,\LM5\,}{\,\LM{3.35}\,}}}
\def\restr{{\LL}}

\newcommand{\R}{\mathbb R}
\newcommand{\N}{\mathbb N}
\newcommand{\Z}{\mathbb Z}

\newcommand{\Om}{\Omega}
\DeclareMathOperator{\Div}{div}
\DeclareMathOperator{\dom}{dom}
\newcommand{\ess}{\textup{ess}\,}
\DeclareMathOperator{\sign}{sign}
\newcommand{\p}{\varphi}
\newcommand{\vp}{\varphi}

\newcommand{\loc}{{\text{loc}}}
\newcommand{\wto}{\rightharpoonup}

\def\dist{\mathrm{dist}}

\def\H{\mathcal H}
\def\eps{\varepsilon}

\author{Antonin Chambolle}
\address{CEREMADE, CNRS and Universit\'e Paris Dauphine, Paris, France}
\email{chambolle@ceremade.dauphine.fr}

\author{Matteo Novaga}
\address{Department of Mathematics, University of Pisa, Pisa, Italy} 
\email{matteo.novaga@unipi.it} 

\numberwithin{equation}{section}

\title{$L^1$--gradient flow of convex functionals}

\begin{document}
\bibliographystyle{plain}

\maketitle

\begin{abstract}
We are interested in the gradient flow of a general first order convex functional with respect to the $L^1$-topology.
By means of an implicit minimization scheme, we show existence of a global limit solution, which satisfies an energy-dissipation estimate, and
solves a non-linear and non-local gradient flow equation, under the assumption of strong convexity of the energy.
Under a monotonicity assumption we can also prove uniqueness of the limit solution,  
even though this remains an open question in full generality. We also consider a geometric evolution corresponding to the 
$L^1$-gradient flow of the anisotropic perimeter. When the initial set is convex, we show that the limit solution 
is monotone for the inclusion, convex and unique until it reaches the Cheeger set of the initial datum. 
Eventually, we show with some examples that uniqueness cannot be expected in general in the geometric case.
\end{abstract}

\tableofcontents

\section{Introduction}
We consider the functional
\[
  \Phi(u):= \int_\Om F(D u) \qquad u\in BV(\Om),
\]
where $\Om$ is a bounded, connected, open subset of $\R^d$,
and $F:\R^d\to [0,+\infty]$ is a convex function 
with \new{$F(\xi)\ge c|\xi|-c'$ for some $c>0$, $c'\ge 0$ and $F(0)=0$}.
\new{Here, $F(Du)$ is understood in the sense of the celebrated paper of
Demengel and Temam~\cite{DM86}:
when the recession function $F^\infty$ (see~\eqref{eq:recession}) of $F$ is not infinite, 
\[
F(Du)=F(\nabla u)\,dx+ F^\infty\left(\frac{D^s u}{|D^s u|}\right)|D^s u|,
\]
where $Du= \nabla u(x)\,dx + D^s u$ is the Radon-Nikod\'ym decomposition
of $Du$ with respect to the Lebesgue measure.
If $F$ is superlinear and hence $F^\infty\equiv +\infty$, then $\Phi(u)<+\infty$
only if $u\in W^{1,1}(\Om)$ and the singular part vanishes in the above formula (see Appendix~\ref{appendixconvex} for details).}

We are interested in the gradient flow of $\Phi$ with respect to the $L^1(\Om)$-topology,
with either homogeneous Neumann, or Dirichlet boundary conditions;
in the latter case the functional
has to be relaxed, with an appropriate boundary integral, if the function $F$ has
linear growth, \new{see Section~\ref{app:Diri}}.

In order to show existence of a \new{gradient flow},
we follow the general approach in~\cite{DG} (see also the comprehensive reference~\cite{AGS}),
which is known as the {\it minimizing movement scheme} and applies to functionals on metric spaces,
under general assumptions.
However, most of the theory developed in~\cite{AGS} does not apply to our setting, since
the Banach space $L^1(\Om)$ does not satisfy the Radon-Nikod\'ym property (see~\cite[Remark~1.4.6]{AGS}).
In particular, we cannot derive uniqueness of gradient flow solution from general results, 
and we are able to prove it only in some special cases.

For this reason, the are few results in the literature concerning $L^1$-gradient flows. 
In~\cite{Dayrens} the author considers the $L^1$-gradient flow of a second order functional related to the Willmore energy,
and studies in detail rotationally symmetric solutions.
We also mention~\cite{RossiStefanelliThomas} where the authors, motivated by a model
of delamination between elastic bodies, study a monotone geometric flow by means of a minimizing movement scheme 
reminiscent to the one in Section~\ref{sec:per}. They show existence of a limit solution and discuss some examples.

\new{We recall that the De~Giorgi minimizing movements scheme for building gradient flows in a metric space~\cite{AGS} typically builds $u^{n+1}\approx u((n+1)\tau)$ as a minimizer of
\[
\min_u \Phi(u) + \frac{1}{2\tau}\dist(u,u^n)^2.
\]
The exponent $2$ here is crucial, as it ensures formally that $\dist(u^{n+1},u^n)\approx \tau|D\Phi(u^{n+1})|$ (for an appropriate definition of the latter expression), as expected in an (implicit) Euler scheme. In particular, a motion always occur if the initial point $u^0$ is not critical for $\Phi$, contrarily to
what would happen, in the present paper, if we used an exponent $1$ and the
distance induced by the $L^1$ norm (then as soon as $\Phi$
 has a subgradient at $u^0$ which is bounded, one easily obtains that no motion occurs for $\tau$ small enough).
}
\smallskip

The plan of the paper is the following: in Section~\ref{sec:existence} we introduce the minimizing movements and 
we show convergence of the discrete solutions to a limit solution. We also show a general dissipation 
estimate from which we derive, under the assumption of strong convexity of $F$, a gradient flow equation satisfied by the limit solution.

In Section~\ref{sec:monotone} we analyze the case when the initial datum is a subsolution (see Definition~\ref{defsub})\new{,
  in the case where $F$ is strictly convex with superlinear growth}. 
In such case the limit solution is non-decreasing  in time and it is indeed unique.

In Section~\ref{sec:dirichlet} we consider the simplest possible functional, that is, the Dirichlet energy. In this particular case we can show
a stronger uniqueness result, namely that the limit gradient flow equation always admits a unique solution.

Finally, in Section~\ref{sec:per} we consider the geometric evolution corresponding to the $L^1$-gradient flow of the anisotropic perimeter.
Even if we are not able to characterize the limit flow as we do in the case of functions, when the initial set is convex, we can prove that 
the evolution is unique, monotone for the inclusion, and remains convex until it reaches the Cheeger set of the initial set. 
In two dimensions we also show that it stays convex until it becomes a Wulff Shape, and then shrinks to a point in finite time. 
Simple examples show that the geometric evolution is in general non-unique, after reaching the Cheeger set. \new{Appendix~\ref{appendixconvex} discusses the definition
and main properties of the functional $\Phi$.}

\smallskip
{\it Acknowledgements.} The second author is member of INDAM-GNAMPA and was supported by the PRIN Project 2019/24. Part of this work was done while he visited CEREMADE, supported by Univ.~Paris-Dauphine PSL.

\section{Existence of solutions}\label{sec:existence}

\subsection{Minimizing movements}

Following \cite{DG}, we introduce the $L^1$-minimizing movement scheme. Given $u^0\in L^1(\Om)$,
we let $u^n$, for $n\ge 1$, be a minimizer of
\begin{equation}\label{eq:MMPhi}
  \min_{\new{u\in L^1(\Om)}} \Phi(u) + \frac{1}{2\tau} \left(\int_\Om |u-u^{n-1}|dx\right)^2.
\end{equation}

If $F$ has superlinear growth, then $u^n\in W^{1,1}(\Om)$. Assuming
in addition that $F$ is strictly convex, we deduce that if $u'$ is
another solution, $Du'=Du^n$ a.e., and $u'-u^n$ is a constant.
As a consequence, any other solution is of the form $u^n+c$
where $c$ is a minimizer of $\|u^n-u^{n-1}-c\|_1$, that is, a median
value of $u^n-u^{n-1}$. Notice that, by convexity,
the set of median values is an interval.
If $u^{n-1}\in W^{1,1}(\Om)$
(which is true for $n\ge 2$, and which will we assume for $n=1$),
then, since $u^n-u^{n-1}\in W^{1,1}(\Om)$ and $\Om$ is connected,
it has a unique median value, hence we have the following result.

\begin{lemma}
  Assume that $F$ is stricly convex with superlinear growth,
  and that $\Phi(u^0)<+\infty$.
  Then for any $n\ge 1$,
  there is a unique minimizer to~\eqref{eq:MMPhi}.
\end{lemma}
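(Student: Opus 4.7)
The plan is to formalize the argument sketched by the authors in the paragraph preceding the lemma.

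For existence, I would apply the direct method to $J(u) := \Phi(u) + \frac{1}{2\tau}\|u - u^{n-1}\|_{L^1(\Om)}^2$. This functional is finite at $u = u^{n-1}$ (using $\Phi(u^0)<+\infty$ and propagating $W^{1,1}$ regularity inductively), bounded below by $0$, and coercive on $W^{1,1}(\Om)$: superlinearity of $F$ yields Dunford--Pettis equi-integrability of $\nabla u$ along a minimizing sequence, while the $L^1$-distance term controls $\|u\|_{L^1}$. Rellich--Kondrachov compactness together with $L^1$-lower semicontinuity of $\Phi$ (superlinearity rules out a singular part in the weak limit of the gradients) then delivers a minimizer $u^n \in W^{1,1}(\Om)$. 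For uniqueness of the gradient, if $u'$ is another minimizer, convexity of $J$ gives $J\bigl((u^n+u')/2\bigr) \leq \tfrac12(J(u^n)+J(u'))$; strict convexity of $F$ forces strict inequality in the $\Phi$-part whenever $\nabla u^n \neq \nabla u'$ on a set of positive measure, contradicting minimality. Hence $\nabla u' = \nabla u^n$ a.e.\ on $\Om$, and connectedness of $\Om$ gives $u' = u^n + c$ for some $c \in \R$.

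It remains to show the constant $c$ is uniquely determined. It must minimize $c \mapsto \|f-c\|_{L^1(\Om)}$ where $f := u^n - u^{n-1} \in W^{1,1}(\Om)$, i.e.\ $c$ must be a median of $f$. The set of medians is a closed interval $[a,b]$; suppose toward contradiction that $a < b$. Since $c \mapsto \|f-c\|_{L^1}$ is constant on $[a,b]$, its derivative $|\{f<c\}|-|\{f>c\}|$ vanishes on $(a,b)$, which (using monotonicity of the two terms) forces $|\{a<f<b\}|=0$. The truncation $g := \max(a,\min(f,b))$ then belongs to $W^{1,1}(\Om)$ and takes only the two values $a$ and $b$ almost everywhere, so $\mathbf{1}_{\{f \leq a\}} = (b-g)/(b-a)$ is a Sobolev characteristic function on the connected open set $\Om$, hence a.e.\ constant. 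Thus $f \leq a$ a.e.\ or $f \geq b$ a.e.\ on $\Om$, in which case only one of $a$, $b$ actually satisfies the median condition --- contradicting $[a,b]\subset M$.

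The main obstacle is this final step: it rests on the twin facts that $f \in W^{1,1}(\Om)$, which is why one must propagate Sobolev regularity of the iterates (hence the hypothesis $\Phi(u^0)<+\infty$), and that $\Om$ is connected. Without either, one can easily exhibit $W^{1,1}$ functions whose set of medians is a nondegenerate interval, and the argument collapses.
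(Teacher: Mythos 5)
Your proposal is correct and follows essentially the same route as the paper's own argument (the paragraph preceding the lemma): strict convexity forces $Du'=Du^n$, so minimizers differ by a constant which must be a median of $u^n-u^{n-1}\in W^{1,1}(\Om)$, and connectedness of $\Om$ makes that median unique. You merely supply the details the paper leaves implicit --- the direct-method existence step and the truncation argument showing a $W^{1,1}$ function on a connected domain cannot have a nondegenerate interval of medians --- both of which are carried out correctly.
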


\begin{remark}\label{rem:surfmin}
In case $F$ is not strictly convex or the growth is not superlinear,
the uniqueness is not guaranteed. However, in that case,
\begin{enumerate}
\item \label{rem-l1u} by strong convexity in $u\mapsto \|u-u^{n-1}\|_1$ of the
  energy, one easily sees that given any two minimizers $u,u'$
  of~\eqref{eq:MMPhi}, $\|u-u^{n-1}\|_1=\|u'-u^{n-1}\|_1$\new{. Indeed,
    one has, for $\theta\in (0,1)$,
    \begin{multline*}
      \|\theta u+(1-\theta) u'-u^{n-1}\|_1^2
      \le \left( \theta \|u-u^{n-1}\|_1+(1-\theta)\|u'-u^{n-1}\|_1\right)^2
      \\ \le \theta \|u-u^{n-1}\|_1^2
      +(1-\theta)  \|u'-u^{n-1}\|_1^2
      -\theta(1-\theta) \left(\|u-u^{n-1} \|_1-\|u'-u^{n-1}\|_1 \right)^2,
    \end{multline*}
    showing that $(u+u')/2$ would be otherwise a better minimizer};
\item \label{rem-ms} one can easily build measurable selections of the
  solutions $\tau\mapsto u^n$ as $\tau$ varies, as follows.
  A first observation is that for any $p \in[1,d/(d-1)]$, if the
  energy of $u$ in~\eqref{eq:MMPhi} is finite, then $u\in L^p(\Om)$,
  by Sobolev's embedding and using that $\Phi(u)$ controls the
  total variation of $u$. Then, given $p\in (1,d/d-1)$,
  for $\eps>0$, one can consider
  the unique minimizer $u^\eps_\tau$ of the strictly convex energy:
  \[
    \Phi(u) + \frac{1}{2\tau} \left(\int_\Om |u-u^{n-1}|dx\right)^2
    +\eps\int_\Om |u|^p dx
  \]
  and one easily shows that $\tau\mapsto  u^\eps_\tau$ is continuous
  (in $L^1(\Om)$, as well as $L^p(\Om)$).
  Sending $\eps\to 0$ we find that $u^\eps_\tau\to u_\tau$, the solution
  of~\eqref{eq:MMPhi} with minimal $L^p$ norm. 
  \new{Indeed, if $u$ is another solution, one can write
  \begin{align*}
  \Phi(u^\eps_\tau)+ \frac{1}{2\tau} &\left(\int_\Om |u^\eps_\tau-u^{n-1}|dx\right)^2
    +\eps\int_\Om |u^\eps_\tau|^p dx \\ &
    \le
    \Phi(u)+ \frac{1}{2\tau} \left(\int_\Om |u-u^{n-1}|dx\right)^2
    +\eps\int_\Om |u|^p dx \\ &
    \le \Phi(u^\eps_\tau)+ \frac{1}{2\tau} \left(\int_\Om |u^\eps_\tau-u^{n-1}|dx\right)^2
    +\eps\int_\Om |u|^p dx.
  \end{align*}
  Hence, $\int_\Om |u^\eps_\tau|^p dx \le \int_\Om |u|^p dx$ and we conclude
  thanks to the lower-semicontinuity of the $p$-norm.
  The limit} $u_\tau$ is thus a \new{(Bochner)-}measurable
  selection. \new{We also obtain~\cite[Thm~8.28]{Leoni2} that
  $(\tau,x)\mapsto u_\tau(x)$ is measurable.}
\end{enumerate}
\end{remark}

We can now define $u_\tau(t) := u^{\floor{t/\tau}}$ where $\floor{\cdot}$ is
the integer part, and we show the following theorem (whose proof is classical).
\begin{theorem}\label{th:minMM}
  Assume that $\Phi(u^0)<+\infty$. Then, there exists
  $u\in C^0([0,+\infty); L^1(\Om))$ and a subsequence $\tau_k\to 0$
  such that $u_{\tau_k}\to u$ in $L^\infty([0,T];L^1(\Om))$, for all $T>0$,
  and
  \[
    \|u(s)-u(t)\|_1 \le \sqrt{2\Phi(u^0)}\sqrt{|t-s|}
\]
for any $t,s\in [0,T]$.
\end{theorem}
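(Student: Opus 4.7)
The plan is to follow the standard De~Giorgi argument: derive a discrete energy/dissipation inequality, turn it into a uniform H\"older-$1/2$ estimate in time, extract a convergent subsequence by Ascoli--Arzel\`a, and finally upgrade pointwise to uniform convergence.

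First I would establish the basic energy estimate by using $u^{n-1}$ as a competitor in the minimization problem~\eqref{eq:MMPhi} defining $u^n$. This yields
\[
\Phi(u^n) + \frac{1}{2\tau}\|u^n-u^{n-1}\|_1^2 \;\le\; \Phi(u^{n-1}),
\]
hence, telescoping from $1$ to $N$,
\[
\Phi(u^N) + \sum_{n=1}^N \frac{1}{2\tau}\|u^n-u^{n-1}\|_1^2 \;\le\; \Phi(u^0).
\]
In particular $\Phi(u^n)\le \Phi(u^0)$ for every $n$, and the coercivity $F(\xi)\ge c|\xi|-c'$ (together with the boundedness of $\Om$) gives a uniform BV bound on $u_\tau(t)$, independent of $t$ and $\tau$. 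By BV compactness, the family $\{u_\tau(t):\tau>0\}$ is relatively compact in $L^1(\Om)$ for each $t$.

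Next, for $0\le s<t$, setting $m=\floor{s/\tau}$ and $n=\floor{t/\tau}$, I would chain the triangle inequality with Cauchy--Schwarz and the dissipation bound:
\[
\|u_\tau(t)-u_\tau(s)\|_1 \le \sum_{k=m+1}^n \|u^k-u^{k-1}\|_1
\le \sqrt{n-m}\,\Bigl(\sum_{k=m+1}^n \|u^k-u^{k-1}\|_1^2\Bigr)^{1/2}
\le \sqrt{2(n-m)\tau\,\Phi(u^0)}.
\]
Since $(n-m)\tau\le t-s+\tau$, we obtain $\|u_\tau(t)-u_\tau(s)\|_1 \le \sqrt{2\Phi(u^0)}\,\sqrt{t-s+\tau}$, an almost-H\"older-$1/2$ bound with a $\tau$-correction that vanishes in the limit.

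To produce the limit, I would fix a countable dense set $D\subset [0,+\infty)$ (say $D=\mathbb Q_+$) and, by a Cantor diagonal argument combined with the pointwise $L^1$-compactness, extract a subsequence $\tau_k\to 0$ along which $u_{\tau_k}(t)\to u(t)$ in $L^1(\Om)$ for every $t\in D$. Passing to the limit $\tau_k\to 0$ in the bound of the previous paragraph yields, for $s,t\in D$,
\[
\|u(t)-u(s)\|_1 \le \sqrt{2\Phi(u^0)}\,\sqrt{|t-s|},
\]
so $u$ extends uniquely to a H\"older-$1/2$ curve on $[0,+\infty)$ with this same constant. Finally, given $T>0$ and $\delta>0$, covering $[0,T]$ by finitely many points of $D$ spaced by $\delta$ and combining the H\"older estimates for $u_{\tau_k}$ and for $u$ with convergence at those finitely many points, one obtains $\sup_{[0,T]}\|u_{\tau_k}(t)-u(t)\|_1\to 0$, i.e.\ convergence in $L^\infty([0,T];L^1(\Om))$.

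The only step requiring attention is the measurability needed to make $u_\tau$ a well-defined $L^\infty_t L^1_x$-valued map when~\eqref{eq:MMPhi} has several minimizers: this is handled by Remark~\ref{rem:surfmin}\eqref{rem-ms}, so that $u_\tau$ is a legitimate object and the Ascoli-type argument applies verbatim; apart from that, everything reduces to the estimate above and classical compactness.
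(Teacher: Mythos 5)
Your proposal is correct and follows essentially the same route as the paper: the telescoped energy--dissipation inequality from comparing $u^n$ with $u^{n-1}$, Cauchy--Schwarz to get the uniform $\sqrt{2\Phi(u^0)}\sqrt{\tau+|t-s|}$ estimate, compactness in $L^1(\Om)$ from the uniform control of $\Phi(u_\tau(t))$, and an Ascoli--Arzel\`a-type extraction (which you simply write out by hand via a diagonal argument over a countable dense set). No substantive difference or gap.
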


\begin{remark} If we consider the piecewise affine interpolant $\hat{u}_\tau$ of $u^n$ in time,
defined as $u^n + (t/\tau-1)(u^{n+1}-u^n)$ for $n\tau\le t\le (n+1)\tau$,
  rather than the piecewise constant interpolant, then the convergence is also in $C^0([0,T];L^1(\Om))$.
\end{remark}

\begin{proof}
For any $0\le m <n$, we have
\[
\begin{aligned}
 \|u_\tau(n\tau)-u_\tau(m\tau)\|_1^2 &\le 
 \left(\sum_{k=m}^{n-1}\|u_\tau((k+1)\tau)-u_\tau(k\tau)\|_1\right)^2
 \\
 &\le (n-m)\sum_{k=m}^{n-1}\|u_\tau((k+1)\tau)-u_\tau(k\tau)\|_1^2
  \\
 &\le 2\tau(n-m) \sum_{k=m}^{n-1}\left( \Phi(u_\tau(k\tau)-\Phi(u_\tau((k+1)\tau)\right) 
 \\
 &= 2 (\Phi(u^m)-\Phi(u^n))  (n\tau -m\tau) 
\\ & \le 2 \Phi(u^0)  (n\tau -m\tau),
\end{aligned}
\]
where we used the Cauchy-Schwarz inequality and the minimality of $u_\tau(k\tau)$,
and the fact that the sequence $(\Phi(u^n))_{n}$ is non-increasing.

We deduce in addition that $\Phi(u_\tau(t))\le \Phi(u^0)$ for any $t>0$, so that,
thanks to the assumptions on $F$ and
together with the bound on $\|u_\tau (t)-u^0\|_1$, we find that there is a compact
subset of $L^1(\Om)$ (even $L^p(\Om)$, for $p<d/(d-1)$) which contains $u_\tau(t)$
for any $t>0$.

For any $t,s\ge 0$, if follows that
\[
  \|u_\tau(t)-u_\tau(s)\|_1 \le \sqrt{2\Phi(u^0)}\sqrt{\left|\floor{t/\tau}-\floor{s/\tau}\right|\tau}
  \le \sqrt{2\Phi(u^0)}\sqrt{\tau+|t-s|}.
\]

The compactness and convergence is then deduced by the Ascoli-Arzel\`a Theorem.
\end{proof}

By a simple interpolation argument, we can
show a slightly improved convergence for the previous theorem.

\begin{proposition}\label{prop:interpol}
  Let $p\in [1,d/(d-1))$. Then the subsequence $(u_{\tau_k})_k$
  in Theorem~\ref{th:minMM}
  also converges to $u$ in $L^\infty([0,T];L^p(\Om))$ for any
  $T>0$, while the piecewise-affine interpolants $\hat{u}_{\tau_k}$
  converge in $C^0([0,T];L^p(\Om))$.
\end{proposition}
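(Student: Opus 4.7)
The plan is to upgrade the $L^\infty([0,T];L^1(\Omega))$ convergence from Theorem~\ref{th:minMM} by interpolating between $L^1(\Omega)$ and $L^{d/(d-1)}(\Omega)$, using the uniform $BV$ bound coming from $\Phi(u_\tau(t))\le \Phi(u^0)$.

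First I would record a uniform $L^{d/(d-1)}$ bound. Since $F(\xi)\ge c|\xi|-c'$, the inequality $\Phi(u_\tau(t))\le \Phi(u^0)$ forces
\[
|Du_\tau(t)|(\Omega) \le \frac{1}{c}\bigl(\Phi(u^0)+c'|\Omega|\bigr)
\]
uniformly in $t$ and $\tau$. Combined with the uniform $L^1$ bound already proved in Theorem~\ref{th:minMM}, the Sobolev embedding $BV(\Omega)\hookrightarrow L^{d/(d-1)}(\Omega)$ yields a constant $M$ with $\|u_\tau(t)\|_{L^{d/(d-1)}(\Omega)}\le M$ for all $t$ and all small $\tau$; the same bound passes to the limit $u$ by lower semicontinuity. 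Then, for $p\in[1,d/(d-1))$, pick $\theta\in[0,1)$ with $\tfrac1p=(1-\theta)+\theta\tfrac{d-1}{d}$ and apply H\"older:
\[
\|u_{\tau_k}(t)-u(t)\|_p \le \|u_{\tau_k}(t)-u(t)\|_1^{\,1-\theta}\,\|u_{\tau_k}(t)-u(t)\|_{d/(d-1)}^{\,\theta}.
\]
Taking the supremum in $t\in[0,T]$, the second factor is bounded by $(2M)^\theta$ while the first vanishes thanks to Theorem~\ref{th:minMM}. This gives the claimed $L^\infty([0,T];L^p(\Omega))$-convergence.

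For the piecewise-affine interpolant I would first note that on $[n\tau,(n+1)\tau]$ the value $\hat{u}_\tau(t)$ is a convex combination of $u^n$ and $u^{n+1}$, so by convexity of $\Phi$ we still have $\Phi(\hat{u}_\tau(t))\le \Phi(u^0)$, hence the same uniform $L^{d/(d-1)}$ bound as above. Moreover, the increment estimate used inside the proof of Theorem~\ref{th:minMM} gives
\[
\|\hat{u}_\tau(t)-u_\tau(t)\|_1 \le \|u^{n+1}-u^n\|_1 \le \sqrt{2\tau\bigl(\Phi(u^n)-\Phi(u^{n+1})\bigr)} \le \sqrt{2\tau\,\Phi(u^0)},
\]
uniformly in $t$, so $\hat u_{\tau_k}\to u$ in $L^\infty([0,T];L^1(\Omega))$; the same H\"older interpolation then promotes this to $L^\infty([0,T];L^p(\Omega))$. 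To upgrade further to $C^0([0,T];L^p(\Omega))$, I would observe that $\hat u_\tau$ is affine in $t$ into $L^1(\Omega)$ on each subinterval, hence continuous in $L^1$, and bounded in $L^{d/(d-1)}$; applying the interpolation inequality to $\hat{u}_\tau(t)-\hat{u}_\tau(s)$ yields continuity of $\hat{u}_\tau$ as a map into $L^p(\Omega)$. A uniform limit of continuous maps being continuous, $u$ itself lies in $C^0([0,T];L^p(\Omega))$ and $\hat u_{\tau_k}\to u$ in this space.

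There is no serious obstacle; the only point to be careful about is to keep the $L^{d/(d-1)}$ bound \emph{uniform in time} both for the piecewise-constant and the piecewise-affine interpolants, which is automatic here because both enjoy $\Phi(\cdot)\le \Phi(u^0)$ by monotonicity of $\Phi(u^n)$ and by convexity of $\Phi$, respectively.
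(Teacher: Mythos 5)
Your proof is correct and uses essentially the same mechanism as the paper: H\"older interpolation of the $L^p$ norm between $L^1$ and $L^{d/(d-1)}$, with the latter controlled uniformly in $t$ and $\tau$ via $\Phi(u_\tau(t))\le\Phi(u^0)$, the coercivity $F(\xi)\ge c|\xi|-c'$, and the Sobolev embedding of $BV(\Om)$. The only (harmless) difference is that you apply the interpolation inequality directly to $u_{\tau_k}(t)-u(t)$ and to $\hat u_{\tau}(t)-\hat u_\tau(s)$, whereas the paper applies it to the increments $u_\tau(t)-u_\tau(s)$ to get an explicit H\"older-in-time modulus in $L^p$; your treatment of the piecewise-affine interpolants and of the $C^0$ claim is complete and consistent with the paper's intent.
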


\begin{proof}
  By construction, for $t\in [0,T]$ the norms $\|u_\tau(t)\|_{d/(d-1)}$ are
  uniformly bounded and for $1<p<d/(d-1)$, there is a compact
  set of $L^p(\Om)$ such that $u_\tau(t)\in C_p$.
  
For $0<\epsilon<1$, writing $|u_\tau(t)-u_\tau(s)|^p=|u_\tau(t)-u_\tau(s)|^{1-\epsilon}|u_\tau(t)-u_\tau(s)|^{p-1+\epsilon}$ and using H\"older's inequality,
  we have
  \[
    \|u_\tau(t)-u_\tau(s)\|_p^p \le \|u_\tau(t)-u_\tau(s)\|_1^{1-\epsilon}
    \left(    \int_\Om |u_\tau(t)-u_\tau(s)|^{\frac{p-1+\epsilon}{\epsilon}}\right)^\epsilon
  \]
  hence if $(p-1)/\epsilon + 1\le d/(d-1)$, for
  instance for $\epsilon=(p-1)(d-1)<1$ (or any $\epsilon<1$ if $d=1$),
  we find that
  \[
    \|u_\tau(t)-u_\tau(s)\|_p \le C\sqrt{\tau + |t-s|}^{\frac{d}{p}-(d-1)}
  \]
  Hence, the convergence is also in $L^\infty([0,T],L^p(\Om))$.
\end{proof}

\subsection{Euler-Lagrange equation}

The Euler-Lagrange equation for $u^n$ minimizing~\eqref{eq:MMPhi} 
takes \new{formally}
the form:
\begin{equation}\label{eq:ELgeneral}
  \begin{cases}
    -\Div z^n + \frac{\|u^n-u^{n-1}\|_1}{\tau} \sign(u^n-u^{n-1})\ni 0  \\[2mm]
    z^n\cdot Du^n = F(Du^n) + F^*(z^n)
  \end{cases}
\end{equation}
\new{(with $F^*$ the convex conjugate of $F$, see Appendix~\ref{appendixconvex})},
where the last statement \new{should be in the sense of~\cite{Anzellotti}}
if $F$ has minimal growth $1$ ($Du$
can be a measure), otherwise we just \new{expect} $z^n\in\partial F(Du^n)$~\new{a.e}.

This follows from~\cite[Prop.~5.6]{EkelandTemam},
applied in $V=L^1(\Om)$ and $V^*=L^\infty(\Om)$. In that case,
$u\mapsto \|u-u^{n-1}\|^2/(2\tau)$ is everywhere continuous
while $\Phi$ is lower semicontinuous. Hence,
$\partial (\Phi(\cdot)+ \|\cdot-u^{n-1}\|_1^2/(2\tau))
= \partial\Phi + \partial\|\cdot-u^{n-1}\|_1^2/(2\tau)$, where
the subgradients are elements of $L^\infty(\Om)$. So a minimizer $(u^n)$ is
characterized by
\begin{equation}\label{eq:ELcompactform}
  0 \in
  \partial\Phi(u^n)  + \frac{\|u^{n}-u^{n-1}\|_1}{\tau}\sign(u^n-u^{n-1})
\end{equation}
where $\sign(t) = \{1\}$ for $t>0$, $\{-1\}$ for $t<0$, and $[-1,1]$ for $t=0$.

\new{Then, in case $F$ is $1$-homogeneous, \eqref{eq:ELgeneral} is deduced
from~\cite[Prop.~3]{MollAnisotropic} (in that case, the second
equation in~\eqref{eq:ELgeneral} is to be understood in the sense of~\cite{Anzellotti}). The more general Lipschitz case is studied
in~\cite{gorny2022dualitybased}. In case both $F$ and $F^*$ are
superlinear, Lemma~\ref{lem:subgrad} (or Lemma~\ref{lem:subgradDiri}) in Appendix~\ref{appendixconvex} also shows~\eqref{eq:ELgeneral}.
A general case (e.g., $F$ neither Lipschitz nor superlinear) remains unclear. 
}

\new{
\begin{remark}\label{rmk:qmeasurable}
For varying $\tau>0$, let us denote $u_\tau$ the minimizer of~\eqref{eq:MMPhi} and
$q_\tau\in\partial\Phi(u_\tau)$ the corresponding subgradient in the Euler-Lagrange
equation~\eqref{eq:ELcompactform}. Then, as in Remark~\ref{rem:surfmin}-(\ref{rem-ms}), one can build a measurable selection of $\tau\mapsto q_\tau$. One first observes
that $q_\tau$ minimizes the dual problem (with $\Phi^*$ the convex
conjugate of $\Phi$)
\[
\min_q \Phi^*(q) -\int_\Om q(x)u^{n-1}(x)\,dx + \frac{\tau}{2}\|q\|_\infty^2.
\]
This is easily deduced from~\eqref{eq:ELcompactform} and the fact
$u_\tau\in\partial\Phi^*(q_\tau)$.
Then, one perturbs this problem by adding a term $\eps\|q\|^2_{p'}/2$ for some
$p'\in (1,+\infty)$ (for instance, $p'=2$ --- using $p'\ge d$ is less crucial
as requiring $p\le d/(d-1)$ in the primal problem, since $q$ has to be bounded anyway).
This allows to define a unique minimizer $q^\eps_\tau$, which in addition
is continuous with respect to $\tau$ in $L^{p'}(\Om)$.

For each $\tau$, as $\eps\to 0$, this minimizer $q^\eps_\tau$ goes to the solution 
$q_\tau$ of the dual problem which is minimal in $L^{p'}$-norm, and is thus
a Bochner-measurable selection (and measurable as a function of $(\tau,x)$).
\end{remark}
}
\subsection{Estimate of the time derivative}

The class of functionals $\Phi$ we are considering satisfies the
following fundamental estimate: for any $u,v\in L^1(\Om)$,
\begin{equation}\label{eq:submod}
  \Phi(u\wedge v) + \Phi(u\vee v) \le \Phi(u)+\Phi(v)
\end{equation}
(with equality if $F$ has superlinear growth)\new{, see Lemma~\ref{lem:submod}}. Here
for $x,y\in\R$,
$x\vee y=\max\{x,y\}$ and $x\wedge y=\min\{x,y\}$ and the notation
extends to real-valued functions.
In this context, we can prove the following:
\begin{lemma}\label{lem:control}
  Let $v\in L^1(\Om)$, $q\in L^\infty(\Om)$ with $q\in\partial \Phi(v)$.
  Let $u$ be a minimizer of:
  \[
    \Phi(u) + \frac{1}{2\tau}\|u-v\|_1^2.
  \]
  Then
  \[
    \frac{\|u-v\|_1}{\tau}\le \|q\|_\infty.
  \]
\end{lemma}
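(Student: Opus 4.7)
The plan is to combine a sharpened form of the variational inequality satisfied by $u$ with the subgradient inequality at $v$, giving two-sided control on $\Phi(v)-\Phi(u)$. The naive strategy of using $v$ itself as a competitor only yields $\Phi(v)-\Phi(u)\ge \tfrac{1}{2\tau}\|u-v\|_1^2$, which combined with the subgradient inequality produces the bound $\|u-v\|_1/\tau\le 2\|q\|_\infty$: a factor $2$ off. The whole point is to upgrade this constant using convexity.

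First I would test minimality of $u$ against the convex interpolant $w_t := (1-t)v + tu$ for $t\in[0,1)$. Since $\|w_t - v\|_1 = t\|u-v\|_1$, minimality reads
\[
\Phi(u) + \tfrac{1}{2\tau}\|u-v\|_1^2 \le \Phi(w_t) + \tfrac{t^2}{2\tau}\|u-v\|_1^2.
\]
Applying convexity of $\Phi$ in the form $\Phi(w_t)\le (1-t)\Phi(v)+t\Phi(u)$ and rearranging gives
\[
(1-t)\bigl(\Phi(u) - \Phi(v)\bigr) + \tfrac{1-t^2}{2\tau}\|u-v\|_1^2 \le 0.
\]
Dividing by $(1-t)>0$ and letting $t\to 1^-$ I obtain the sharpened dissipation
\[
\Phi(v) - \Phi(u) \ge \tfrac{1}{\tau}\|u-v\|_1^2.
\]

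Then I would use that $q\in\partial\Phi(v)$ to bound the same quantity from above: the subgradient inequality applied at the test point $u$ reads $\Phi(u)\ge \Phi(v)+\int_\Omega q(u-v)\,dx$, hence
\[
\Phi(v) - \Phi(u) \le \int_\Omega q(v-u)\,dx \le \|q\|_\infty\,\|u-v\|_1.
\]
Combining the two displays yields $\tfrac{1}{\tau}\|u-v\|_1^2 \le \|q\|_\infty\,\|u-v\|_1$, and dividing by $\|u-v\|_1$ (the claim being trivial if this vanishes) gives the result.

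There is no real obstacle: the argument uses only convexity of $\Phi$ and the definition of a subgradient, and in particular does not rely on the submodularity estimate~\eqref{eq:submod} introduced just above. The one point that deserves emphasis is the convex-interpolation step, which is the discrete-time analogue of the standard De~Giorgi ``slope versus $2$-slope'' trick and is essential to capture the correct constant $1/\tau$ rather than $1/(2\tau)$.
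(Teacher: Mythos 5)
Your argument is correct, and it is genuinely different from the one in the paper. You prove the lemma by pure convex analysis: the interpolation $w_t=(1-t)v+tu$ together with the exact identity $\|w_t-v\|_1=t\|u-v\|_1$ gives the sharpened dissipation $\Phi(v)-\Phi(u)\ge \|u-v\|_1^2/\tau$ (the classical De~Giorgi trick, which indeed recovers the constant that the naive competitor $w=v$ loses), and the subgradient inequality at $v$ bounds the same difference above by $\|q\|_\infty\|u-v\|_1$. All quantities are finite since $v\in\dom\Phi$ forces $\Phi(u)<+\infty$ by minimality, so the division by $1-t$ and by $\|u-v\|_1$ is harmless. The paper instead goes through the submodularity inequality \eqref{eq:submod}, the resulting $T$-monotonicity of the subgradient \eqref{eq:subplus}, and the Euler--Lagrange characterization \eqref{eq:ELcompactform} of the minimizer. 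What your route buys is generality and economy: it uses neither \eqref{eq:submod} nor the Euler--Lagrange equation, and it works verbatim for any convex functional on a normed space with the squared-norm penalization (it is essentially the statement that the metric slope of a convex function controls the step length). What the paper's route buys is the \emph{one-sided} refinement in \eqref{eq:halfcontrol}: the speed is controlled by $\ess\sup_\Om q$ whenever $\{v>u\}$ has positive measure and by $-\ess\inf_\Om q$ whenever $\{v<u\}$ has positive measure, which is strictly stronger than the symmetric bound by $\|q\|_\infty$ and is reused later (its strongly convex upgrade \eqref{eq:halfcontrol2} drives Section~\ref{sec:sc} and the sign/monotonicity remark following Theorem~\ref{th:speedH1}). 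So your proof fully establishes the lemma as stated, but it would not by itself replace the machinery the paper builds around it.
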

\begin{proof}
  The following remark is crucial: if $q\in\partial\Phi(v)$, $p\in\partial\Phi(u)$, then (denoting as usual $x^+=x\vee 0$ and $x^- = (-x)^+$):
  \begin{equation}\label{eq:subplus}
    \int_\Om (q-p)(v-u)^+ dx \ge 0 \quad\textup{ and }\quad
    \int_\Om (q-p)(v-u)^- dx \le 0.
  \end{equation}
  Indeed, one has:
  \begin{equation}\label{eq:vcompar}
    \Phi(u\vee v)\ge \Phi(u) + \int_\Om p(u\vee v-u)dx
    \quad\textup{ and }\quad
    \Phi(u\wedge v)\ge \Phi(v) + \int_\Om q(u\wedge v-v)dx.
  \end{equation}
  Using that $u\vee v-u=(v-u)^+$ and $u\wedge v-v=-(v-u)^+$,
  the first inequality in~\eqref{eq:subplus}
  follows by summing the two previous inequalities and using~\eqref{eq:submod}.
  The second is proved similarly.

  Since in the Lemma, $u$ satisfies the equation
  (\textit{cf}~\eqref{eq:ELcompactform}):
  \[
    \exists p\in  \partial\Phi(u) \cap  -\sign(u-v)\frac{\|u-v\|_1}{\tau},
  \]
  we deduce from~\eqref{eq:subplus} that (here ``sign'' is single-valued
  as the integrand vanishes for $v\le u$):
  \begin{equation}\label{eq:halfcontrol}
    0\le    \int_\Om \left(q-\sign(v-u)\frac{\|u-v\|_1}{\tau}\right)(v-u)^+dx
    \le \left((\ess\sup_\Om q)-\frac{\|u-v\|_1}{\tau}\right)\int_\Om (v-u)^+dx
  \end{equation}
  so that if $\{v>u\}$ has positive measure,
  $\frac{\|u-v\|_1}{\tau}\le \ess\sup_\Om q$.
  Similarly (multiplying with $-(v-u)^-$) we show that
  so that if $\{v<u\}$ has positive measure,
  $\frac{\|u-v\|_1}{\tau}\le -\ess\inf_\Om q$.
  The thesis follows.
\end{proof}
 We deduce immediately the following result, as a consequence of
 Lemma~\ref{lem:control} and the Euler-Lagrange equation~\eqref{eq:ELcompactform}.
 \begin{theorem}\label{th:controlspeed}
   Let $u^0\in L^1(\Om)$, $\tau>0$ and $(u^n)_{n\ge 0}$ defined
   by the minimizing movement scheme. Then \begin{itemize}
   \item[i.] for any $n\ge 1$,
     \[
       \frac{\|u^{n+1}-u^n\|_1}{\tau} \le \frac{\|u^{n}-u^{n-1}\|_1}{\tau}\ ;
     \]
     \newer{
     \item[\new{ii.}] as a result,
       \[
         \frac{\|u^{n+1}-u^n\|_1}{\tau} \le  \sqrt{\frac{2\Phi(u^0)}{(n+1)\tau}}\ ;
       \]
     }
   \item[\new{iii.}] if in addition $\partial \Phi(u^0)\neq\emptyset$, then
     for any $n\ge 0$,
     \[
       \frac{\|u^{n+1}-u^n\|_1}{\tau} \le \|\partial^0\Phi(u^0)\|_\infty
     \]
     where $\partial^0\Phi$ denotes the element in the subgradient with minimal norm.
   \end{itemize}
 \end{theorem}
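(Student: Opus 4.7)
The plan is to deduce the three items in cascade from Lemma~\ref{lem:control}, combined with the Euler-Lagrange equation~\eqref{eq:ELcompactform} and the standard telescoping energy-dissipation estimate already used in the proof of Theorem~\ref{th:minMM}.

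For item (i), I would first observe that, by~\eqref{eq:ELcompactform} applied at step $n\ge 1$, there exists
\[
q^n\in \partial\Phi(u^n)\cap \left(-\sign(u^n-u^{n-1})\frac{\|u^n-u^{n-1}\|_1}{\tau}\right),
\]
so that $\|q^n\|_\infty\le \|u^n-u^{n-1}\|_1/\tau$. Applying Lemma~\ref{lem:control} with $v=u^n$, $q=q^n$, and $u=u^{n+1}$ (which by construction minimizes $\Phi(\cdot)+\|\cdot-u^n\|_1^2/(2\tau)$) immediately yields
\[
\frac{\|u^{n+1}-u^n\|_1}{\tau}\le \|q^n\|_\infty\le \frac{\|u^n-u^{n-1}\|_1}{\tau},
\]
which is (i).

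For item (ii), I would use that by (i) the sequence $a_k:=\|u^{k+1}-u^k\|_1/\tau$ is non-increasing, and combine this with the telescoping bound
\[
\sum_{k=0}^n \frac{\|u^{k+1}-u^k\|_1^2}{2\tau}\le \Phi(u^0)-\Phi(u^{n+1})\le \Phi(u^0)
\]
(obtained by testing the minimality of each $u^{k+1}$ against $u^k$, exactly as in the proof of Theorem~\ref{th:minMM}). Since $a_n^2\le a_k^2$ for all $k\le n$, this gives $(n+1)\tau\, a_n^2\le \sum_{k=0}^n \tau a_k^2\le 2\Phi(u^0)$, whence the claimed bound. For item (iii), by the monotonicity in (i) it suffices to estimate $\|u^1-u^0\|_1/\tau$: taking $q=\partial^0\Phi(u^0)$ (which is well-defined by assumption) and applying Lemma~\ref{lem:control} directly with $v=u^0$ and $u=u^1$ gives $\|u^1-u^0\|_1/\tau\le \|\partial^0\Phi(u^0)\|_\infty$, and then (i) propagates the bound to every $n$.

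There is no serious obstacle here: the only delicate point is that Lemma~\ref{lem:control} requires a subgradient at the ``previous'' step, which is automatic from~\eqref{eq:ELcompactform} for $n\ge 1$ in item (i), and is the standing assumption in item (iii). The rest is bookkeeping with the dissipation identity that was already exploited in Theorem~\ref{th:minMM}.
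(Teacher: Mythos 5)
Your proposal is correct and follows essentially the same route as the paper: items (i) and (iii) are exactly the "immediate consequence" of Lemma~\ref{lem:control} combined with the Euler--Lagrange inclusion~\eqref{eq:ELcompactform} that the paper invokes, and your derivation of (ii) from the monotonicity in (i) together with the telescoped minimality estimate $\sum_{k=0}^n\|u^{k+1}-u^k\|_1^2/(2\tau)\le\Phi(u^0)-\Phi(u^{n+1})$ is word-for-word the paper's argument. No gaps.
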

 \new{We observe that the set of $u^0$ such that $\partial\Phi(u^0)$ contains
   a bounded element is dense in the domain of $\Phi$, see Lemma~\ref{lem:density}.}
 \newer{\begin{proof}
     Only point (ii.)~still needs to be proven. We write for $n\ge 0$ (thanks to point (i.)):
     \[
       \frac{\|u^{n+1}-u^n\|^2_1}{\tau^2}\le \frac{2}{(n+1)\tau}\sum_{k=0}^n \frac{\|u^{k+1}-u^k\|_1^2}{2\tau} \le \frac{2}{(n+1)\tau}\sum_{k=0}^n (\Phi(u^k)-\Phi(u^{k+1})),
     \]
     and the claim follows.         
 \end{proof}}
 \begin{corollary} \label{cor:Lip}
   Let $u$ be an evolution provided by Theorem~\ref{th:minMM}.
   Then $u$ is \newer{locally} Lipschitz in time. \new{Its time derivative is a bounded
     measure of the form $\dot{u}(t)\otimes dt$ which satisfies,} for a.e.~$t\ge 0$,
   \newer{
   \begin{equation}\label{eq:boundtimegeneral}
     |\dot{u}(t)|(\Omega)\le \sqrt{\frac{2\Phi(u^0)}{t}}.
   \end{equation}
   If in addition  $\partial\Phi(u^0)\neq\emptyset$, then}
 \begin{equation}\label{eq:boundtime}
   \new{|\dot{u}(t)|(\Om)} \le \|\partial^0\Phi(u^0)\|_\infty.
 \end{equation}
 \end{corollary}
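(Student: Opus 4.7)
The plan is to transfer the discrete estimates of Theorem~\ref{th:controlspeed} to the limit $u$ via a telescoping argument, and then to make sense of the time derivative as a vector-valued measure, since $L^1(\Om)$ lacks the Radon--Nikod\'ym property and a pointwise $\dot u(t)\in L^1(\Om)$ cannot be expected in general. First I fix $0<s<t$ and $\tau<s$. By Theorem~\ref{th:controlspeed}(i), the sequence $n\mapsto\|u^{n+1}-u^n\|_1/\tau$ is non-increasing for $n\ge 1$, and combined with~(ii) this gives $\|u^{n+1}-u^n\|_1/\tau\le\sqrt{2\Phi(u^0)/s}$ for every $n\ge\lfloor s/\tau\rfloor$. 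Summing along the indices between $\lfloor s/\tau\rfloor$ and $\lfloor t/\tau\rfloor$ and applying the triangle inequality yields
\[
\|u_\tau(t)-u_\tau(s)\|_1\ \le\ \sqrt{\frac{2\Phi(u^0)}{s}}\,(t-s+\tau),
\]
with an analogous estimate valid down to $s=0$ and with the uniform constant $\|\partial^0\Phi(u^0)\|_\infty$ whenever $\partial\Phi(u^0)\neq\emptyset$ (invoking~(iii) instead of~(ii)).

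Next I pass to the limit along the subsequence of Theorem~\ref{th:minMM}: since $u_{\tau_k}\to u$ in $L^\infty([0,T];L^1(\Om))$, sending $\tau_k\to 0$ in the previous display gives $\|u(t)-u(s)\|_1\le\sqrt{2\Phi(u^0)/s}\,(t-s)$ for all $0<s\le t$, so that $u$ is locally Lipschitz on $(0,+\infty)$ as a curve into $L^1(\Om)$, together with the global Lipschitz bound on $[0,+\infty)$ in the subgradient case.

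Finally, to realize the time derivative, I use the continuous embedding $L^1(\Om)\hookrightarrow \mathcal{M}(\overline\Om)=C(\overline\Om)^*$ (for absolutely continuous measures the total variation norm coincides with the $L^1$-norm). Since $C(\overline\Om)$ is separable, any Lipschitz curve into its dual is weak-$*$ differentiable almost everywhere, with derivative bounded in total variation by the local Lipschitz constant (Pettis-type differentiation in separable duals). This produces a weak-$*$ measurable selection $t\mapsto\dot u(t)\in\mathcal{M}(\overline\Om)$ with $|\dot u(t)|(\overline\Om)\le L(t)$ for a.e.~$t$, where $L(t)$ is the local Lipschitz constant obtained above, satisfying
\[
\langle u(t)-u(s),\vp\rangle\ =\ \int_s^t\langle\dot u(r),\vp\rangle\,dr \qquad \forall\,\vp\in C(\overline\Om).
\]
The product measure $\dot u(t)\otimes dt$ on $\Om\times(0,+\infty)$ is then the distributional time derivative of $u$, and both~\eqref{eq:boundtimegeneral} and~\eqref{eq:boundtime} follow at once from $|\dot u(t)|(\overline\Om)\le L(t)$. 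The hard part is precisely this last step: standard a.e.~differentiation of Lipschitz functions fails with target $L^1(\Om)$, so one must enlarge the target to the dual $\mathcal{M}(\overline\Om)$ to recover a meaningful derivative — at the price of allowing $\dot u(t)$ to be a genuinely singular measure, which is why the statement records the derivative in the disintegrated form $\dot u(t)\otimes dt$ rather than as an element of $L^1$.
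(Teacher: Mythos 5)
Your proof is correct, and it reaches the conclusion by a genuinely different (though closely related) route from the paper's. Both arguments start from the same input, namely the discrete speed bounds of Theorem~\ref{th:controlspeed} transferred to the limit curve by telescoping and lower semicontinuity of the $L^1$-norm (you make this transfer explicit; the paper leaves it implicit in the phrase ``thanks to Theorem~\ref{th:controlspeed}''). Where you differ is in how the measure $\dot u(t)$ is produced: the paper tests $u$ against smooth space--time test functions, bounds the resulting difference quotients to conclude that the distributional derivative $\dot u$ is a bounded Radon measure on $(t_1,t_2)\times\Om$ whose time-marginal is absolutely continuous with respect to $dt$, and then invokes disintegration to write $\dot u=\dot u(t)\otimes dt$; you instead view the Lipschitz curve $t\mapsto u(t)$ as taking values in $\mathcal{M}(\overline{\Om})=C(\overline{\Om})^*$ and apply weak-$*$ differentiation of Lipschitz curves into the dual of a separable space, obtaining $\dot u(t)$ as an a.e.~pointwise weak-$*$ limit of difference quotients. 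Your route has the small advantage that the bounds \eqref{eq:boundtimegeneral}--\eqref{eq:boundtime} follow immediately from weak-$*$ lower semicontinuity of the total variation norm applied to $\|u(t\pm h)-u(t)\|_1/h$, whereas the disintegration route needs a short additional localization in time to convert the marginal bound into a pointwise-a.e.~bound on $|\dot u(t)|(\Om)$; the paper's route, on the other hand, avoids any appeal to vector-valued differentiation theory. Two cosmetic remarks: your derivative a priori lives in $\mathcal{M}(\overline{\Om})$ and could in principle charge $\partial\Om$, but since $|\dot u(t)|(\Om)\le|\dot u(t)|(\overline{\Om})$ the stated estimates are unaffected; and the identification of $\dot u(t)\otimes dt$ with the distributional time derivative for test functions depending on both $t$ and $x$ deserves the one-line remark that it suffices to check it on products $\alpha(t)\beta(x)$, which your displayed identity handles.
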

 
   \new{\begin{proof}
       Indeed, we observe first that given $\eta$ a smooth function with compact support
       in $ (t_1,t_2)\times\Om$, $t_1<t_2$, one has for a.e.~$t>0$:
       \begin{align*}
       \int_{t_1}^{t_2}\int_\Om u(t,x)\partial_t\eta(t,x)dt dx
       &= \lim_{\tau\to 0} 
       \int_{t_1}^{t_2}\int_\Om u(t,x)\frac{\eta(t+\tau,x)-\eta(t,x)}{\tau} dt dx\\\
       &= \lim_{\tau\to 0} 
       \int_{t_1}^{t_2}\int_\Om \frac{u(t-\tau,x)-u(t,x)}{\tau}\eta(t,x) dt dx
       \\ & \le \min\left\{\|\partial^0\Phi(u^0)\|_\infty,\sqrt{\frac{2\Phi(u^0)}{t_1}}\right\} \int_{t_1}^{t_2}\|\eta(t,\cdot)\|_\infty dt,
       \end{align*}
       thanks to Theorem~\ref{th:controlspeed}.
We deduce that $\dot u$ is a measure, whose marginals are in addition absolutely continuous with respect to the Lebesgue measure $dt$. Hence one can 
       disintegrate $\dot u$ as $\dot u(t)\otimes dt$, and it follows that
       for a.e.~$t$, \eqref{eq:boundtimegeneral}-\eqref{eq:boundtime} hold.
   \end{proof}}
   \newer{
   \begin{corollary} \label{cor:LipPhi}
     Let $u$ be an evolution provided by Theorem~\ref{th:minMM}, assuming as always $\Phi(u^0)<+\infty$.
     Then $t\mapsto \Phi(u(t))$ is locally Lipschitz. More precisely,
     \begin{itemize}
     \item[i.] If $\partial\Phi(u^0)\neq\emptyset$, then $|\Phi(u(s)-\Phi(u(t)|\le \|\partial^0\Phi(u^0)\|_\infty^2 |s-t|$ for any $s,t\ge 0$;
     \item[ii.] In general, for $s>t>0$,
       \[
         |\Phi(u(s))-\Phi(u(t))|\le \frac{2\Phi(u^0)}{t}|s-t|.
       \]
     \end{itemize}
     In particular, $d\Phi(u(t))/dt$  exists for almost every $t>0$.
   \end{corollary}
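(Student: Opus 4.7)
The plan is to first prove both estimates at the discrete level for the minimizing-movement sequence $(u^n)$, and then pass to the limit $\tau\to 0$. The discrete estimate will combine the subgradient $q^n\in\partial\Phi(u^n)$ obtained from the Euler--Lagrange equation~\eqref{eq:ELcompactform}, which satisfies $\|q^n\|_\infty\le\|u^n-u^{n-1}\|_1/\tau$, with the time-step bounds from Theorem~\ref{th:controlspeed}.

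By convexity, $\Phi(u^{n+1})\ge\Phi(u^n)+\int_\Om q^n(u^{n+1}-u^n)\,dx$, and since $\Phi(u^{n+1})\le\Phi(u^n)$,
\[
0\le\Phi(u^n)-\Phi(u^{n+1})\le\|q^n\|_\infty\|u^{n+1}-u^n\|_1\le\frac{\|u^n-u^{n-1}\|_1}{\tau}\cdot\frac{\|u^{n+1}-u^n\|_1}{\tau}\cdot\tau.
\]
In case (i), Theorem~\ref{th:controlspeed}(iii), combined with the monotonicity (i), bounds both quotients by $\|\partial^0\Phi(u^0)\|_\infty$, so each increment is at most $\|\partial^0\Phi(u^0)\|_\infty^2\,\tau$ and telescoping yields the discrete version of (i). In case (ii), Theorem~\ref{th:controlspeed}(i)--(ii) bounds both quotients by $\sqrt{2\Phi(u^0)/(n\tau)}$, giving $\Phi(u^n)-\Phi(u^{n+1})\le 2\Phi(u^0)\tau/(n\tau)$; summing from $k=n$ to $m-1$ and using $1/(k\tau)\le 1/t$ for $k\ge n=t/\tau$ produces $\Phi(u^n)-\Phi(u^m)\le 2\Phi(u^0)(s-t)/t$, with $s=m\tau$, $t=n\tau$.

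To pass to the limit, set $\Phi_\tau(t):=\Phi(u_\tau(t))$. The discrete estimates make $\Phi_\tau$ non-increasing, bounded by $\Phi(u^0)$, and equi-Lipschitz on any $[\delta,T]$ (on $[0,T]$ in case (i)) up to an $O(\tau)$ defect. Along the subsequence of Theorem~\ref{th:minMM}, Helly's selection combined with Ascoli--Arzel\`a produces a uniform limit $\Psi$ inheriting the Lipschitz constant. Lower semicontinuity of $\Phi$ on $L^1(\Om)$ together with the convergence $u_{\tau_k}(t)\to u(t)$ gives $\Phi(u(t))\le\Psi(t)$. The main obstacle I expect is the converse inequality $\Psi\le\Phi(u(\cdot))$, which amounts to the energy-recovery property for minimizing movements; my plan is to obtain it by passing to the limit in the De~Giorgi dissipation inequality $\Phi(u^n)+\sum_{k=1}^n\|u^k-u^{k-1}\|_1^2/(2\tau)\le\Phi(u^0)$ and invoking lower semicontinuity of both $\Phi$ and the dissipation functional, using the convergence of affine interpolants from Proposition~\ref{prop:interpol}. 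Once $\Psi=\Phi(u(\cdot))$, the Lipschitz bounds transfer, and the differentiability statement is immediate from Rademacher's theorem (or the fundamental theorem of calculus for Lipschitz functions of one real variable).
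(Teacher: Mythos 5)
Your discrete estimates are correct, but the passage to the limit has a genuine gap at exactly the point you flag: the identity $\Psi(t)=\Phi(u(t))$, i.e.\ $\limsup_k\Phi(u_{\tau_k}(t))\le\Phi(u(t))$. Lower semicontinuity only gives $\Phi(u(t))\le\liminf_k\Phi(u_{\tau_k}(t))$, and your proposed fix via the De~Giorgi inequality does not close the gap: from $\Phi(u_\tau(t))\le\Phi(u^0)-D_\tau(t)$ (with $D_\tau$ the discrete dissipation) and $\liminf_\tau D_\tau(t)\ge D(t)$ you only obtain $\limsup_\tau\Phi(u_\tau(t))\le\Phi(u^0)-D(t)$, and to conclude you would still need the reverse energy--dissipation inequality $\Phi(u^0)-D(t)\le\Phi(u(t))$, i.e.\ the chain rule $\frac{d}{dt}\Phi(u(t))=\int_\Om q(t)\dot u(t)\,dx$. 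That chain rule is precisely what the paper says cannot be taken for granted in the $(L^1,L^\infty)$ duality (the space lacks the Radon--Nikod\'ym property), and it is only established later, under strong convexity of $F$, in Section~\ref{sec:sc}. So in the general setting of the corollary your route to energy recovery does not go through.

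The paper sidesteps energy recovery entirely by proving a \emph{slope} inequality rather than an increment inequality: from~\eqref{eq:ELcompactform}, for every fixed $v\in L^1(\Om)$ and every $n\ge1$,
\[
\Phi(v)\ \ge\ \Phi(u^n)-\frac{\|u^n-u^{n-1}\|_1}{\tau}\,\|v-u^n\|_1 .
\]
Because $v$ is fixed independently of $\tau$, the limit $\tau\to0$, $n\tau\to t$ uses lower semicontinuity only on the term $\Phi(u^n)$, and on the favorable side of the inequality, yielding $\Phi(v)\ge\Phi(u(t))-C_t\|v-u(t)\|_1$ with $C_t=\|\partial^0\Phi(u^0)\|_\infty$ in case (i) and $C_t=\sqrt{2\Phi(u^0)/t}$ in case (ii), via Theorem~\ref{th:controlspeed}. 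Only \emph{after} the limit does one take $v=u(s)$ and combine with the $L^1$-Lipschitz bound of Corollary~\ref{cor:Lip} to get the two-sided estimate. Your discrete computation is essentially the special case $v=u^{n+1}$ of this inequality; keeping $v$ arbitrary until after the limit is the missing idea that makes the limit passage work without any energy convergence.
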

   \begin{proof}
     For every $v\in L^1(\Om)$ and $n\ge 1$, one can write thanks to~\eqref{eq:ELcompactform}:
     \[
       \Phi(v)\ge \Phi(u^n) - \frac{\|u^n-u^{n-1}\|_\tau}{\tau}\int_\Om
       \sign(u^n-u^{n-1})(v-u^n)dx \ge
       \Phi(u^n) - \frac{\|u^n-u^{n-1}\|_1}{\tau}\|v-u^n\|_1.
     \]
     Using Theorem~\ref{th:controlspeed} (iii.) and letting $\tau\to 0$ with
     $\tau n\to t$, we find:
     \[
       \Phi(v) \ge \Phi(u(t)) - \|\partial^0\Phi(u^0)\|_\infty \|v-u(t)\|_1
     \]
     for all $t\ge 0$.
     Then we conclude choosing $v=u(s)$, and observing that Corollary~\ref{cor:Lip} yields that $\|u(t)-u(s)\|_1\le
     \|\partial^0\Phi(u^0)\|_\infty|t-s|$.

     Alternatively, we can also bound  $\frac{\|u^n-u^{n-1}\|_1}{\tau}\|v-u^n\|_1$
     using Theorem~\ref{th:controlspeed} (ii.), and we obtain,
     letting again $\tau\to 0$ with $n\tau\to t$:
     \[
       \Phi(u(t))\le\Phi(v) + \sqrt{\frac{2\Phi(u^0)}{t}}\|v-u(t)\|_1.
     \]
     We then choose $v=u(s)$ (for $s<t$ and $s>t$), 
     and the thesis follows from Corollary~\ref{cor:Lip}.
   \end{proof}
   
 }
 \subsection{Dissipation estimate}
 We shall prove the following dissipation estimate. \new{This is a variant of~\cite[Thm.~2.3.3]{AGS}, yet our time derivative is here a measure
 while we still wish to consider the slopes as elements in $L^\infty(\Omega)$.}
 \begin{theorem}\label{th:dissip}
   Let $u^0$ satisfy $\Phi(u^0)<+\infty$ and let $u$ be a limit of minimizing movements
   given by Theorem~\ref{th:minMM}. Then, for any $t> 0$,
   $\dot{u}$ is a measure with marginal $s\mapsto |\dot u(s)|(\Om)$ in $L^2(0,t)$
   and there exists $q\in L^2((0,t);L^\infty(\Om))$ with $q(s)\in -\partial\Phi(u(s))$
   for a.e.~$s\ge 0$ such that
\begin{equation}\label{eq:dissip}
  \Phi(u(t)) + \frac{1}{2} \int_0^t (|\dot{u}(s)|(\Om))^2 ds
  + \frac{1}{2}\int_0^t \|q(s)\|^2_\infty ds\le \Phi(u^0).
\end{equation}  
 \end{theorem}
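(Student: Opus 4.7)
The plan is to adapt the De~Giorgi--Ambrosio--Gigli--Savar\'e dissipation scheme~\cite{AGS}, using the De~Giorgi \emph{variational} interpolant to produce the factor of two that converts the basic one-step inequality $\Phi(u^n)+\|u^n-u^{n-1}\|_1^2/(2\tau)\le\Phi(u^{n-1})$, which alone would yield~\eqref{eq:dissip} only with constant $1/4$, into a genuine energy-dissipation identity at the discrete level. For each $\tau>0$, $n\ge 1$ and $r\in((n-1)\tau,n\tau]$, I let $\tilde u_\tau(r)$ be a minimizer of
\[
u\mapsto\Phi(u)+\frac{\|u-u^{n-1}\|_1^2}{2(r-(n-1)\tau)},
\]
selected measurably in $r$ as in Remark~\ref{rem:surfmin}-(\ref{rem-ms}), and I let $\tilde q_\tau(r)\in-\partial\Phi(\tilde u_\tau(r))$ be the associated subgradient from~\eqref{eq:ELcompactform}, measurably selected as in Remark~\ref{rmk:qmeasurable}, so that $\|\tilde q_\tau(r)\|_\infty=\|\tilde u_\tau(r)-u^{n-1}\|_1/(r-(n-1)\tau)$.

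The first step is the discrete energy identity. The function $g(\sigma):=\min_u[\Phi(u)+\|u-u^{n-1}\|_1^2/(2\sigma)]$ is non-increasing, and $\tilde g(\lambda):=g(1/\lambda)$, being the infimum over $u$ of a family of affine functions of $\lambda$, is concave; hence $g$ is locally Lipschitz on $(0,\tau]$, and a direct envelope computation (the common value $\|\tilde u_\tau(r)-u^{n-1}\|_1$ is independent of the selected minimizer by Remark~\ref{rem:surfmin}-(\ref{rem-l1u})) gives $g'(\sigma)=-\|\tilde q_\tau((n-1)\tau+\sigma)\|_\infty^2/2$ for a.e.~$\sigma$. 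Using $g(0^+)=\Phi(u^{n-1})$, integrating on $(0,\tau]$ and summing over $n=1,\dots,N:=\floor{t/\tau}$ produces
\[
\Phi(u^N)+\frac{1}{2}\int_0^{N\tau}v_\tau(s)^2\,ds+\frac{1}{2}\int_0^{N\tau}\|\tilde q_\tau(r)\|_\infty^2\,dr=\Phi(u^0),
\]
with $v_\tau(s):=\|u^n-u^{n-1}\|_1/\tau$ on $((n-1)\tau,n\tau]$.

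Next I would pass to a subsequence $\tau_k\to 0$ with $\tau_k N_k\to t$. The one-step bound $\|\tilde u_\tau(r)-u^{n-1}\|_1^2\le 2\tau\,\Phi(u^0)$ forces $\tilde u_{\tau_k}\to u$ in $L^\infty([0,t];L^1(\Om))$ jointly with $u_{\tau_k}$ (Theorem~\ref{th:minMM}). Both $v_{\tau_k}$ and $\|\tilde q_{\tau_k}\|_\infty$ are uniformly bounded in $L^2(0,t)$; since $L^2((0,t);L^\infty(\Om))$ is the dual of $L^2((0,t);L^1(\Om))$, Banach--Alaoglu yields a weak-$*$ limit $q$ for $\tilde q_{\tau_k}$ with $\int_0^t\|q(s)\|_\infty^2\,ds\le\liminf_k\int_0^t\|\tilde q_{\tau_k}\|_\infty^2\,ds$. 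For the velocity, the piecewise-affine interpolant $\hat u_{\tau_k}$ has time derivative of $L^1$-norm equal to $v_{\tau_k}(s)$, so by duality against $L^2((0,t);C_0(\Om))$ the distributional time derivative $\dot u$ on $(0,t)\times\Om$ is a bounded measure disintegrable as $\dot u(s)\otimes ds$, with $s\mapsto|\dot u(s)|(\Om)$ in $L^2(0,t)$ and $\int_0^t(|\dot u(s)|(\Om))^2\,ds\le\liminf_k\int_0^t v_{\tau_k}^2\,ds$.

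To conclude I would identify $q(s)\in-\partial\Phi(u(s))$ for a.e.~$s$, by passing to the limit in
\[
\int_0^t\left[\Phi(\tilde u_{\tau_k}(s)+\phi(s,\cdot))-\Phi(\tilde u_{\tau_k}(s))+\int_\Om\tilde q_{\tau_k}(s,x)\phi(s,x)\,dx\right]ds\ge 0
\]
for smooth time-dependent test perturbations $\phi$, and then localizing in $s$. Combined with $\Phi(u(t))\le\liminf_k\Phi(u^{N_k})$ from the lower semicontinuity of $\Phi$, this yields~\eqref{eq:dissip} upon passing to the limit in the discrete identity. The main obstacles I anticipate are (a) arranging a joint measurable selection of $\tilde u_\tau$ and $\tilde q_\tau$ compatible with the envelope computation, which forces me to combine the uniqueness-up-to-constants argument of Remark~\ref{rem:surfmin}-(\ref{rem-ms}) with the dual selection of Remark~\ref{rmk:qmeasurable}, and (b) representing $\dot u$ as a disintegrable measure rather than a Bochner-integrable $L^1(\Om)$-valued function, since $L^1(\Om)$ lacks the Radon-Nikod\'ym property.
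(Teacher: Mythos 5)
Your strategy is the paper's own: De~Giorgi's variational interpolant, an envelope computation for the one-step energy identity, weak compactness for the slopes, a measure-theoretic treatment of $\dot u$, and a final limit passage to identify $q(s)\in-\partial\Phi(u(s))$. The discrete part is correct (your equality version of the envelope derivative is a harmless sharpening of the paper's inequality $h'(s)\le -\|\tilde u_\tau(n\tau+s)-u^n\|_1^2/(2s^2)$), and your treatment of $\dot u$ as a disintegrable measure with $L^2$ marginal matches the paper's duality argument against test functions with controlled sup norm.

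Two steps need repair. First, $L^2((0,t);L^\infty(\Om))$ is \emph{not} the dual of $L^2((0,t);L^1(\Om))$: that identification would require $L^\infty(\Om)=(L^1(\Om))^*$ to have the Radon--Nikod\'ym property, which is precisely what fails in this paper's setting; the dual is the strictly larger space of weak-$*$ measurable maps. The paper sidesteps this by extracting the weak limit of $q_\tau$ in the reflexive space $L^2((0,T);L^{p'}(\Om))$ for a finite conjugate exponent $p'$, and then recovering $\int_0^t\|q(s)\|_\infty^2\,ds\le\liminf_k\int_0^t\|q_{\tau_k}(s)\|_\infty^2\,ds$ from the weak lower semicontinuity of that convex functional. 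Second, and more seriously, your identification inequality
\[
\int_0^t\Bigl[\Phi(\tilde u_{\tau_k}(s)+\phi(s,\cdot))-\Phi(\tilde u_{\tau_k}(s))+\int_\Om\tilde q_{\tau_k}\phi\,dx\Bigr]ds\ge 0
\]
does not pass to the limit as written: rearranged, it reads $\int\Phi(\tilde u_{\tau_k})\le\int\Phi(\tilde u_{\tau_k}+\phi)+\iint\tilde q_{\tau_k}\phi$, and while the left-hand side is handled by lower semicontinuity, the right-hand side requires $\limsup_k\int\Phi(\tilde u_{\tau_k}+\phi)\le\int\Phi(u+\phi)$, i.e.\ upper semicontinuity of $\Phi$ along a moving base point, which is unavailable since $\Phi$ is merely lsc. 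The paper instead tests the subgradient inequality against a \emph{fixed} comparison function $\varphi$, namely $\int\Phi(\varphi)\ge\int\Phi(\tilde u_\tau)+\iint q_\tau(\tilde u_\tau-\varphi)$, so the only delicate term is the pairing $\iint q_\tau\tilde u_\tau$, which converges because $\tilde u_\tau\to u$ strongly in $L^\infty([0,T];L^p(\Om))$ while $q_\tau\rightharpoonup q$ weakly in $L^2([0,T];L^{p'}(\Om))$. With these two substitutions your argument coincides with the paper's.
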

 
 \begin{proof}
We remain in the framework of Theorem~\ref{th:minMM}, assuming
that $\Phi(u^0)<+\infty$ and that $u_\tau$, defined above converges, up to a subsequence,
to a function $u\in C^{0,1/2}([0,T];L^1(\Om))$.

As usual (see for instance~\cite[Sec.~3.2]{AGS}), for $n\tau < t <(n+1)\tau$, we let $\tilde{u}_\tau(t)$
be a minimizer of
\[
  \min_u \Phi(u) + \frac{1}{2(t-n\tau)} \|u-u^n\|^2_1,
\]
which satisfies the Euler-Lagrange equation
\begin{equation}\label{eq:elDG}
  \partial \Phi(\tilde{u}_\tau(t))
  + \frac{\|\tilde{u}_\tau(t)-u^n\|_1}{2(t-n\tau)} \sign(\tilde{u}_\tau(t)-u^n)\ni 0.
\end{equation}
By Remark~\ref{rem:surfmin}-\eqref{rem-l1u}, observe that even if the minimizer might be
non-unique, the value of $\|\tilde{u}_\tau(t)-u^n\|_1$ is. In any
case, as mentioned in Remark~\ref{rem:surfmin}-(\ref{rem-ms}), we assume
that $t\mapsto \tilde u_\tau(t)$ is measurable.
We also have
that $\|\tilde{u}_\tau(t)-u_\tau(t)\|_1 \le \sqrt{2\Phi(u^0)(t-\new{n}\tau)} \new{\le \sqrt{2\Phi(u^0)\tau}}$, so that
$\tilde{u}_\tau$ converges to the same limit
as $u_\tau$, also uniformly in time.

Now, for $n\ge 0$, $0<s<\tau$, we let
$h(s) =
\Phi(\tilde{u}_\tau(n\tau + s))+
\|\tilde{u}_\tau(n\tau + s)-u^n\|^2_1/(2s)$, hence
$h(\tau)=\Phi(u^{n+1})+\|u^{n+1}-u^n\|_1^2/(2\tau)$ and $\lim_{s\to 0} h(s)=\Phi(u^n)$.
It is standard that:
\[
  h'(s) \le -\frac{\|\tilde{u}_\tau(n\tau + s)-u^n\|^2_1}{2s^2}, 
\]
so that (using $h(\tau)=\lim_{\epsilon\to 0} h(\epsilon)+\int_\epsilon^\tau h'(s)ds$)
\[
  \Phi(u^{n+1})+\frac{\|u^{n+1}-u^n\|_1^2}{2\tau}
  \le \Phi(u^n) -\frac{1}{2} \int_0^\tau \frac{\|\tilde{u}_\tau (n\tau + s)-u^n\|_1^2}{s^2} ds.
\]
Thanks to the Euler-Lagrange equation~\eqref{eq:elDG}, we deduce:
\[
  \Phi(u^{n+1})  +
  \frac{1}{2}\int_{n\tau}^{(n+1)\tau} \|\dot{\hat{u}}_\tau(s)\|_1^2ds
  +\frac{1}{2}\int_{n\tau}^{(n+1)\tau} \|q_\tau(s)\|^2_\infty ds
  \le\Phi(u^n),
\]
where for all $t$, $q_\tau(t)\in -\partial\Phi(\tilde{u}_\tau(t))$
\new{(and we also assume, reasoning as in Remark~\ref{rmk:qmeasurable}, that
$q_\tau$ is measurable),}
and $\hat u(t)$ is the piecewise-affine interpolant,
 which also converges to $u$ up to a subsequence
(in $C^0([0,T];L^p(\Om))$ for $1\le p \le d/(d-1)$, see Prop.~\ref{prop:interpol}). Summing this inequality from $n=0$
to $\floor{t/\tau}-1$, for $0<t\le T$, we find:
\[
  \Phi(u_\tau(t))+\frac{1}{2}\int_0^{t-\tau}\|\dot{\hat{u}}_\tau(s)\|_1^2ds
  +\frac{1}{2}\int_{0}^{t-\tau} \|q_\tau(s)\|^2_\infty ds
  \le\Phi(u^0).
\]
By lower-semicontinuity of the convex functions appearing in the
integrals 
we claim that~\eqref{eq:dissip} is deduced,
where $q$ is a weak limit (in $L^2([0,T];L^{p'}(\Om))$) of $q_\tau$,
and $p'$ the conjugate exponent of some $p\in (1,d/(d-1))$.

The only difficulty is with the measure term. Given $\varphi \in
C_c^\infty([0,T)\times \Om)$, it is not difficult to check that
for $\tau$ small enough:
\[
  \frac{1}{2}\int_0^{t-\tau}\|\dot{\hat{u}}_\tau(s)\|_1^2ds
  \ge \int_\Om \varphi(0,x)u^0(x)dx -\int_0^t \int_\Om \dot{\varphi} \hat{u}_\tau dx ds
  - \frac{1}{2}\int_0^t \|\varphi(s)\|_{\infty}^2ds
\]
so that, passing to the limit along an appropriate subsequence,
\[
  \int_\Om\varphi(0)u^0dx- \int_0^t \int_\Om \dot{\varphi} u dx ds
   - \frac{1}{2}\int_0^t \|\varphi(s)\|_{\infty}^2ds
   \le
   \liminf_{\tau\to 0}
     \frac{1}{2}\int_0^{t-\tau}\|\dot{\hat{u}}_\tau(s)\|_1^2ds=:\ell.
\]
In particular (using also that $u(t)\to u^0$ as $t\to 0$),
we deduce immediately that the distribution $\dot{u}$ is
a bounded Radon measure (in $[0,T)\times \Om$), satisfying for all $t\le T$:
\[
  \int_{[0,t]\times \Om} \varphi d\dot u
     - \frac{1}{2}\int_0^t \|\varphi(s)\|_{\infty}^2ds
   \le \ell.
 \]
 Letting $n\ge 1$, $0=t_0<t_1<\cdots< t_n=t$ and considering $m_i\ge 0$, $i=1,\dots,n$,
 and the supremum over all functions $\varphi$ with $\varphi_{|(t_{i-1},t_i)}\in C_c^\infty([t_{i-1},t_i)\times\Om)$ \new{and} $\|\varphi\|_{L^\infty(t_{i-1},t_i)}\le m_i$ we deduce:
 \[
   \sum_{i=1}^n m_i|\dot u|([t_{i-1},t_i)\times\Om) - (t_{i}-t_{i-1})\frac{m_i^2}{2}\le\ell.
 \]
 By uniform approximation of a smooth function $\psi\in C_c^\infty([0,t);\R_+)$ by piecewise
 constant functions, we deduce that the marginal measure $|\dot u|(\Om)$ in $(0,t)$ satisfies:
 \[
   \int_0^t\psi(s)d(|\dot u|(\Om))(s) - \frac{1}{2}\psi^2(s) ds \le \ell
 \]
 and it follows that $|\dot u|(\Om)$ is indeed in $L^2(0,t)$, with
 \[
   \frac{1}{2}\int_0^t (|\dot u|(\Om))^2 ds \le \ell.
 \]

Now, we check that $q(t)\in \partial\Phi(u(t))$ a.e.:
given $\varphi\in C_c^\infty((0,T)\times \Om)$, we have
\[
  \int_0^T \Phi(\varphi(t))dt
  \ge \int_0^T \Phi(\tilde{u}_\tau(t))dt +
  \int_0^T \int_\Om q_\tau(t,x)(\tilde{u}_\tau(t,x)-\varphi(t,x)) dx dt.
\]
Since $\tilde{u}_\tau\to u$ in $L^\infty([0,T];L^p(\Om))$
(using Prop.~\ref{prop:interpol}) and 
$q_\tau\rightharpoonup q$ in $L^2([0,T];L^{p'}(\Om))$, we obtain that
\[
  \int_0^T \int_\Om q_\tau(t,x)\tilde{u}_\tau(t,x) dx dt
  \to \int_0^T\int_\Om q(t,x)u(t,x)dx dt.
\]
It follows that
\[
  \int_0^T \Phi(\varphi(t))dt
  \ge \int_0^T \Phi(u(t))dt +
  \int_0^T \int_\Om q(t,x)({u}(t,x)-\varphi(t,x)) dx dt.  
\]
We deduce that for a.e.~$t$, $-q(t)\in \partial\Phi(u(t))$.
\end{proof}

A dissipation estimate like \eqref{eq:dissip} usually implies
that the flow $u(t)$ is a curve of maximal slope in the sense of~\cite[Def.~1.3.2]{AGS},
satisfying 
\begin{equation}\label{eqslope}
\new{\frac{d\Phi(u(t))}{dt}} = -\int_\Om q(t)\new{\dot u(t)}\,dx\qquad  \text{for a.e.~$t\ge 0$.}
\end{equation}
However, as already observed in the Introduction, the results in \cite{AGS} fail to apply in the $(1,\infty)$-duality,
since $L^1(\Om)$ does not satisfy the Radon-Nikod\'ym property\new{, and it is not obvious to give a meaning to~\eqref{eqslope} in this context}.

We shall rigorously prove \eqref{eqslope} in the next section,
under the additional assumption that $F$ is 
\new{strongly convex}.



\subsection{Strongly convex case}\label{sec:sc}
 In this part, we first assume that in addition there exists $\gamma>0$ such that $F$ is $\gamma$-convex:
 \[
   F(\eta)\ge F(\xi) + p\cdot(\eta-\xi) +\frac{\gamma}{2}|\eta-\xi|^2
 \]
 for any $\eta,\xi\in\R^d$ and $p\in\partial F(\xi)$.
 Then~\eqref{eq:vcompar} becomes\new{, still given $q\in\partial\Phi(v)$, $p\in\partial\Phi(u)$ (and in particular $u,v\in\dom\Phi\subseteq H^1(\Om)$):}
\begin{align*}
&  \Phi(u\vee v)\ge \Phi(u) + \int_\Om p(u\vee v-u)dx+\frac{\gamma}{2}\int_\Om |D(u\vee v -u)|^2dx 
\\
&  \Phi(u\wedge v)\ge \Phi(v) + \int_\Om q(u\wedge v-v)dx+\frac{\gamma}{2}\int_\Om |D(u\wedge v -v)|^2dx .
\end{align*}
One now deduces, following the arguments in the proof of Lemma~\ref{lem:control}:
\begin{equation}\label{eq:halfcontrol2}
\begin{aligned}
  &  \gamma\int_{\{v>u\}} |Dv-Du|^2dx \le \int_\Om (q-p)(v-u)^+dx\\
  &  \gamma\int_{\{v<u\}} |Dv-Du|^2dx \le -\int_\Om (q-p)(v-u)^-dx.
\end{aligned}
\end{equation}
Summing, we find:
\[
  \gamma\int_\Om |Dv-Du|^2dx \le \int_\Om (q-p)(v-u)dx.
\]
Using $v=u^{n}$, $u=u^{n+1}$ and~\eqref{eq:ELcompactform}, it follows for all $n\ge 1$:
\begin{multline*}
  \gamma\int_\Om |Du^{n+1}-Du^n|^2dx
\\  \le \int_\Om \left(-\sign(u^n-u^{n-1})\frac{\|u^n-u^{n-1}\|_1}{\tau}+
    \sign(u^{n+1}-u^{n})\frac{\|u^{n+1}-u^{n}\|_1}{\tau}\right)(u^n-u^{n+1})dx
  \\
  \le -\frac{1}{\tau}\|u^{n+1}-u^n\|^2_1 + \frac{1}{\tau}\|u^n-u^{n-1}\|_1\|u^{n+1}-u^n\|_1
\end{multline*}
(there is an abuse of notation here since ``sign'' is multivalued, however
we use only that $|\sign|\le 1$ and $\sign(u^{n+1}-u^n)(u^n-u^{n+1})=-|u^{n+1}-u^n|$),
which we rewrite \new{as}:
\begin{multline}\label{eq:estimspeed}
  \gamma\tau \int_\Om \left|D\frac{u^{n+1}-u^n}{\tau}\right|^2 dx
  + \frac{1}{2\tau^2}(\|u^{n+1}-u^n\|_1-\|u^{n}-u^{n-1}\|_1)^2
  + \frac{\|u^{n+1}-u^n\|_1^2}{2\tau^2}
\\  \le  \frac{\|u^{n}-u^{n-1}\|_1^2}{2\tau^2}.
\end{multline}
Then, summing~\eqref{eq:estimspeed}, we get the estimate:
\begin{equation}\label{eq:h1speed}
  \gamma\int_{0}^{n\tau}\|D\dot{\hat u}_\tau(t+\tau)\|_2^2dt\le
  \frac{\|u^1-u^0\|^2_1}{2\tau^2} \new{\le \frac{1}{2}\|\partial^0\Phi(u^0)\|_\infty^2}.
\end{equation}
  If the initial speed is not bounded we can sum from $m$ to $n>m$ and
  get
  \begin{equation}\label{eq:h1speedunbounded}
  \gamma\int_{m\tau}^{n\tau}\|D\dot{\hat u}_\tau(t+\tau)\|_2^2\,dt\le
  \frac{\|u_\tau((m+1)\tau)-u_\tau(m\tau)\|^2_1}{2\tau^2}\le
  \newer{\frac{\Phi(u^0)}{(m+1)\tau},}
\end{equation}
\newer{thanks to Theorem~\ref{th:controlspeed} \new{(ii.)}.}
Recalling Theorem~\ref{th:controlspeed} we are in particular able to deduce the following result:
\begin{theorem}\label{th:speedH1}
  Assume $F$ is $\gamma$-convex and let $u$ be given by Theorem~\ref{th:minMM}.
  Then $\dot u\in L^2((t,+\infty);H^1(\Om))$ for any $t>0$, with
  \[
    \new{\gamma}\int_t^{+\infty} \new{\|D\dot{u}\|_2^2}\,ds\le\frac{\Phi(u^0)}{t}.
  \]
  If in addition $\partial\Phi(u^0)\neq\emptyset$, then
  \[
    {\gamma}\int_0^{+\infty} \new{\|D\dot{u}\|_2^2}\,ds\le \new{\frac{1}{2}}
    \|\partial^0 \Phi(u^0)\|_\infty^2.
  \]
\end{theorem}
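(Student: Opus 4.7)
The plan is to pass to the limit $\tau \to 0$ in the uniform discrete estimates~\eqref{eq:h1speed}--\eqref{eq:h1speedunbounded} using weak compactness in $L^2([t,T]; L^2(\Om;\R^d))$ and lower semicontinuity of the $L^2$-norm, then identify the weak limit of $D\dot{\hat u}_\tau$ with the distributional $D\dot u$.

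First I fix $0<t<T$ and apply~\eqref{eq:h1speedunbounded} with $m = m_\tau$ chosen so that $(m_\tau+1)\tau \to t$: after a shift by $\tau$ in the time variable, the sequence $D\dot{\hat u}_\tau$ is uniformly bounded in $L^2([t,T]; L^2(\Om;\R^d))$ with constant $\Phi(u^0)/t + o(1)$. The Banach-Alaoglu theorem in this Hilbert space yields a subsequence converging weakly to some $g \in L^2([t,T]; L^2(\Om;\R^d))$.

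The key step is to identify $g$ with $D\dot u$. For any $\phi \in C_c^\infty((t,T)\times\Om;\R^d)$, integrating by parts in space (since $\phi$ has compact support in $x$) and then in time gives
\[
\int_t^T\!\!\int_\Om D\dot{\hat u}_\tau \cdot \phi\, dx\, ds = \int_t^T\!\!\int_\Om \hat u_\tau\, \partial_s(\Div\phi)\, dx\, ds.
\]
By Proposition~\ref{prop:interpol}, $\hat u_\tau \to u$ in $C^0([0,T]; L^p(\Om))$, so the right-hand side converges to $\int\!\!\int u\, \partial_s(\Div\phi)\, dx\, ds$, which by definition equals $\langle D\dot u,\phi\rangle$ in $\mathcal{D}'((t,T)\times\Om)$. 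Uniqueness of weak limits then gives $g = D\dot u$, upgrading the earlier merely-measure description of $\dot u$ (Corollary~\ref{cor:Lip}) to a genuine $L^2([t,T];H^1(\Om))$-valued function; the $L^2$-in-space part of the $H^1$-norm follows at each time from the bound $|\dot u(s)|(\Om)\le \sqrt{2\Phi(u^0)/s}$ combined with Poincar\'e-Wirtinger. Lower semicontinuity of the $L^2$-norm under weak convergence then yields
\[
\gamma\int_t^T \|D\dot u\|_2^2\, ds \;\le\; \liminf_{\tau\to 0}\gamma\int_t^T \|D\dot{\hat u}_\tau\|_2^2\, ds \;\le\; \frac{\Phi(u^0)}{t},
\]
and $T\to\infty$ via monotone convergence gives the first inequality of the theorem. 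For the regular-initial-data case, the same passage to the limit applied to~\eqref{eq:h1speed} (which is a uniform bound on $[0,\infty)$ rather than $[t,\infty)$) directly yields $\gamma\int_0^\infty \|D\dot u\|_2^2\, ds \le \frac{1}{2}\|\partial^0\Phi(u^0)\|_\infty^2$.

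I expect the main obstacle to be the identification step $g = D\dot u$: since a priori $\dot u$ is only known to be a bounded Radon measure disintegrated as $\dot u(s)\otimes ds$, we must argue that the weak compactness of the \emph{spatial gradients} of the time-derivatives of the interpolants, combined with the strong convergence $\hat u_\tau \to u$ in $C^0(L^p)$, forces $\dot u$ to be genuinely $H^1$-valued and square-integrable in time. This is where the double integration by parts and Proposition~\ref{prop:interpol} are essential.
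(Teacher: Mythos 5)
Your proposal is correct and follows essentially the route the paper intends: the paper derives the uniform discrete bounds \eqref{eq:h1speed}--\eqref{eq:h1speedunbounded} and then states Theorem~\ref{th:speedH1} without writing out the limit passage, which is exactly the weak-compactness, distributional-identification and lower-semicontinuity argument you supply (the recovery of the $L^2$-in-space part via Poincar\'e/Poincar\'e--Wirtinger and Corollary~\ref{cor:Lip} is also the paper's own subsequent argument). Your write-up just makes explicit what the paper leaves implicit, including the harmless shift by $\tau$ in the time variable.
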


\begin{remark} Taking into account \eqref{eq:halfcontrol2} when deriving~\eqref{eq:halfcontrol}, we can derive slightly
  more precise estimates which may be useful in case the initial speed $q^0\in\partial\Phi(u^0)$
  has a sign. Indeed, we obtain for instance that:
  \begin{itemize}
  \item If $\{ u^1>u^0 \}$ has positive measure, then
  \[
    \frac{\|u^{1}-u^0\|_1}{\tau} \le \ess\sup (-q^0) 
    - \gamma\frac{\int_\Om |D (u^1-u^0)^+|^2 dx}{\|(u^1-u^0)^+\|_1};
  \]
\item If $\{u^1<u^0\}$ has positive measure, then
  \[
    \frac{\|u^{1}-u^0\|_1}{\tau} \le \ess\sup q^0 
    - \gamma\frac{\int_\Om |D (u^1-u^0)^-|^2 dx}{\|(u^1-u^0)^-\|_1}.
  \]
\end{itemize}
In particular, if $q^0\le 0$ a.e., we deduce that $u^1\ge u^0$ a.e., but then
$q^1:= -\sign(u^1-u^0)\|u^1-u^0\|^2/\tau\in \partial \Phi(u^1)$ is also non-positive
and again, $u^2\ge u^1$ a.e.: by induction we find that $u^{n+1}\ge u^n$ for all $n\ge 0$.
\end{remark}

\newer{We now are able to derive rigorously \eqref{eqslope}.
  First, thanks to Poincar\'e inequality, $\dot{u}\in L^2((t,T)\times \Om)$ for any $T>t>0$. Indeed, in the Dirichlet case, one has $\int_\Om |\dot{u}(s)|^2dx\le c_\Om \int_\Om |D\dot{u}(s)|^2dx$ for each $s>0$ such that the right-hand side integral is finite, with $c_\Om$ the Poincar\'e constant of $H^1_0(\Om)$. In the Neumann case, Poincar\'e-Wirtinger's inequality yields $\int_\Om |\dot{u}(s)-m(s)|^2 dx\le c'_\Om\int_\Om |D\dot{u}(s)|^2dx$ for $m(s)=(1/|\Om|)\int_\Om \dot{u}(s)dx$, which is bounded thanks to Corollary~\ref{cor:Lip}.
  
  In particular for any $b>a>0$,
  \[
    \lim_{|s|\to 0} \int_a^b\int_\Om |\dot{u}(t+s,x)-\dot{u}(t,x)|^2dx\, dt=0
  \]
  and using $(u(t+s)-u(t))/s=(1/s)\int_0^s \dot{u}(r)dr$ and Jensen's inequality,
  \[
    \lim_{|s|\to 0} \int_a^b\int_\Om \left|\tfrac{u(t+s,x)-u(t,x)}{s}-\dot{u}(t,x)\right|^2dx\, dt=0.
  \]
  Hence, we can find a sequence $s_k\downarrow 0$ such that for a.e.~$t>0$,
  \[
    \lim_{k\to\infty} \left\|\tfrac{u(t\pm s_k)-u(t)}{\pm s_k}-\dot{u}(t)\right\|_2 = 0
  \]

  Now we consider $q(t)$ from Theorem~\ref{th:dissip}. For a.e.~$t>0$, $q(t)\in L^\infty(\Om)$,
  $\Phi(u(t))$ is differentiable at $t$, and one has for $s$ small (positive or negative):
  \[
    \frac{1}{|s|}\Phi(u(t+s))-\Phi(u(t))\ge -\int_\Om q(t)\frac{u(t+s)-u(t)}{|s|} dx.
  \]
  Choosing $s$ along the sequence $s_k$ and sending $k\to \infty$, we deduce~\eqref{eqslope}.

  As a consequence, one has:
}
\[
\Phi(u(0))-\Phi(u(t)) = \int_0^t \int_\Om q(s)\dot{u}(s)dx ds\le
\frac{1}{2}\int_0^t \|q(s)\|_\infty^2 + \|\dot{u}(s)\|_1^2
\]
which combined with~\eqref{eq:dissip}, yields that $q(s)\in\partial\|\cdot\|_1^2(\dot{u}(s))/2$
 for a.e.~$s>0$.

If $F$ is $\gamma$-convex and $C^1$, \new{with full domain,}
we have additionally that
$q(t) = \Div\nabla F(Du(t))$
for a.e.~$t>0$\new{, \textit{cf}~Lemmas~\ref{lem:subgrad}-\ref{lem:subgradDiri}}.
 Hence we have:
 \begin{theorem} \label{th:contevolsc}
   Assume $F$ is $C^1$ and strongly convex, \new{with full domain,} and let $u$ be
   a limit of minimizing movements given by Theorem~\ref{th:minMM}, starting from $u^0$
   with $\Phi(u^0)<+\infty$. Then,
   $\dot u\in L^2((t,+\infty);H^1(\Om))$ for any $t>0$ and satisfies the equations
\begin{equation}\label{eq:contevolsc}
  \begin{cases} |\Div \nabla F(Du)|\le \|\dot{u}\|_1& \text{a.e.~in }(0,+\infty) \times \Om \\
    \dot{u}\,\Div \nabla F(Du) = |\dot{u}|\|\dot{u}\|_1 & \text{a.e.~in }(0,+\infty) \times \Om.
  \end{cases} 
   \end{equation}
\end{theorem}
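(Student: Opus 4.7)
The first claim, $\dot u\in L^2((t,+\infty);H^1(\Om))$ for every $t>0$, is already the content of Theorem~\ref{th:speedH1}. It therefore remains to establish~\eqref{eq:contevolsc}, and my plan is to read it off directly from the ingredients assembled just before the theorem statement.

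The starting point is to justify the chain-rule identity~\eqref{eqslope} rigorously. Theorem~\ref{th:speedH1} places $\dot u$ in $L^2_{\mathrm{loc}}((0,+\infty);H^1(\Om))$; combined with Poincar\'e (in the Dirichlet case) or with Poincar\'e--Wirtinger together with the mean bound from Corollary~\ref{cor:Lip} (in the Neumann case), this upgrades $\dot u$ to $L^2_{\mathrm{loc}}((0,+\infty);L^2(\Om))$. A standard difference-quotient argument then extracts a sequence $s_k\downarrow 0$ such that $(u(t+s_k)-u(t))/s_k\to\dot u(t)$ strongly in $L^2(\Om)$ for a.e.~$t>0$, and passing to the limit in the subgradient inequality for $q(t)\in -\partial\Phi(u(t))$ from Theorem~\ref{th:dissip} yields~\eqref{eqslope}.

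Once~\eqref{eqslope} is in hand, integrating it in time and comparing with the dissipation estimate~\eqref{eq:dissip} gives
\[
\int_0^t\!\!\int_\Om (-q(s))\dot u(s)\,dx\,ds \;=\; \Phi(u^0)-\Phi(u(t)) \;\ge\; \tfrac12\int_0^t \bigl(\|\dot u(s)\|_1^2+\|q(s)\|_\infty^2\bigr)\,ds.
\]
The chain of H\"older and Young inequalities $\int (-q)\dot u\le\|q\|_\infty\|\dot u\|_1\le \tfrac12(\|q\|_\infty^2+\|\dot u\|_1^2)$ must therefore be saturated for a.e.~$s\in(0,t)$. Equality in Young forces $\|q(s)\|_\infty=\|\dot u(s)\|_1$, and equality in H\"older forces $-q(s)\dot u(s)=|q(s)||\dot u(s)|$ a.e.~in $\Om$ with $|q(s)|=\|q(s)\|_\infty$ a.e.~on $\{\dot u(s)\ne 0\}$. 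Combining these, $-q(s)\dot u(s)=|\dot u(s)|\,\|\dot u(s)\|_1$ and $|q(s)|\le\|\dot u(s)\|_1$ a.e.

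Finally, since $F$ is $C^1$ with full domain, Lemma~\ref{lem:subgrad} (or~\ref{lem:subgradDiri}) in Appendix~\ref{appendixconvex} identifies $\partial\Phi(u)=\{-\Div\nabla F(Du)\}$, so $-q(s)=-\Div\nabla F(Du(s))$, i.e.~$q(s)=\Div\nabla F(Du(s))$. Substituting this into the two pointwise relations above produces exactly~\eqref{eq:contevolsc}. The main obstacle in this argument is the rigorous passage to the chain rule~\eqref{eqslope}; this is precisely where the strong convexity hypothesis is essential, since without the $H^1$-regularity of $\dot u$ supplied by Theorem~\ref{th:speedH1} the difference quotients of $u$ need not converge strongly enough in $L^2$ to test against $q(t)\in L^\infty(\Om)$.
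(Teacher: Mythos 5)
Your argument follows the paper's own route step for step: Theorem~\ref{th:speedH1} for the $H^1$-regularity of $\dot u$, the Poincar\'e/difference-quotient justification of \eqref{eqslope}, saturation of the H\"older and Young inequalities against the dissipation estimate \eqref{eq:dissip}, and the identification of the subgradient via Lemma~\ref{lem:subgrad} (or Lemma~\ref{lem:subgradDiri}). The substance is correct, but there is a sign slip that propagates to your final display. With the convention $q(s)\in-\partial\Phi(u(s))$ of Theorem~\ref{th:dissip} and \eqref{eqslope} as stated, integration gives $\Phi(u^0)-\Phi(u(t))=\int_0^t\int_\Om q\,\dot u\,dx\,ds$ (with $+q$, not $-q$); the saturation argument then yields $q\,\dot u=|\dot u|\,\|\dot u\|_1$ and $|q|\le\|\dot u\|_1$ a.e., and since $-q\in\partial\Phi(u)$ gives $-q=-\Div\nabla F(Du)$, i.e.\ $q=\Div\nabla F(Du)$, this is exactly \eqref{eq:contevolsc}. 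As written, your chain instead produces $-q\,\dot u=|\dot u|\,\|\dot u\|_1$ and hence $\dot u\,\Div\nabla F(Du)=-|\dot u|\,\|\dot u\|_1$, the negation of the claim (and incompatible with the model case $\dot u\,\Delta u\ge 0$ of \eqref{eq:contevol}). This is a bookkeeping error rather than a gap in the ideas; once the sign in the integrated energy identity is corrected, the proof coincides with the paper's.
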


\subsection{Minimal surface energy}

The case where $\Phi(u) = \int_\Om\sqrt{1+|D u|^2}dx$ is in between
the \new{setup of the previous} section and \new{that of} the last Section~\ref{sec:per}, where we introduce a geometric
version of this gradient flow.
In that case, we remark that if we
can show that when $u^0$ is $L$-Lipschitz for
some constant $L\ge 0$,
$u$ remains $L$-Lipschitz, then from Section~\ref{sec:sc} we deduce that
the solution satisfies $\dot{u}\in H^1(\Om)$ for positive time and that the
characterization~\eqref{eq:contevolsc} holds. Indeed, in that case, since the gradients
are all bounded by $L$, $F$ is $\gamma$-convex, with $\gamma = (1+L^2)^{-3/2}$.

This is the case for instance if we consider the problem in a periodic setting
($\Om=\R^d/\Z^d$):\begin{lemma}
  Let $\Om=\R^d/\Z^d$, $F(p)=\sqrt{1+|p|^2}$, $v$ a $L$-Lipschitz, \new{($L\ge 0$)} function and $u$
  a minimizer of:
  \begin{equation}\label{eq:graphmin}
    \min_u\Phi(u) + \frac{1}{2\tau}\left(\int_\Om |u-v|dx\right)^2.
  \end{equation}
  Then $u$ is $L$-Lipschitz, and unique.
\end{lemma}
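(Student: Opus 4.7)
The plan is to obtain the Lipschitz bound by comparing $u$ with constant-shifted translates of itself via the Euler--Lagrange equation, and then to deduce uniqueness from strict convexity of $F$ combined with the resulting Lipschitz regularity. For the Lipschitz estimate, I fix $h\in\R^d\setminus\{0\}$ and set $u_h(x):=u(x+h)$, $v_h(x):=v(x+h)$. Since $\Om=\R^d/\Z^d$, translation invariance makes $u_h$ a minimizer of~\eqref{eq:graphmin} with $v$ replaced by $v_h$, and hence $w:=u_h+L|h|$ is a minimizer with datum $V:=v_h+L|h|$; the $L$-Lipschitz assumption gives $V\ge v$ pointwise. As $\|w-V\|_1=\|u-v\|_1$, both $u$ and $w$ satisfy~\eqref{eq:ELgeneral} with the \emph{same} multiplier $\lambda:=\|u-v\|_1/\tau$ and with $\nabla F(p)=p/\sqrt{1+|p|^2}$:
\[
  -\Div\nabla F(Du)=-\lambda\sigma_u,\qquad -\Div\nabla F(Dw)=-\lambda\sigma_w,
\]
with $\sigma_u\in\sign(u-v)$ and $\sigma_w\in\sign(w-V)$. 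Subtracting these, testing with $(u-w)^+$, and integrating by parts on the torus yields
\[
  \int_\Om(\nabla F(Du)-\nabla F(Dw))\cdot D(u-w)^+\,dx
  =-\lambda\int_\Om(\sigma_u-\sigma_w)(u-w)^+\,dx.
\]
On $\{u>w\}$ one has $u-v>w-V$ (as $u-w>0$ and $V-v\ge 0$), so $\sigma_u\ge\sigma_w$ necessarily there, whence the right-hand side is $\le 0$; the left-hand side is $\ge 0$ by monotonicity of $\nabla F$, so both vanish. Strict monotonicity of $\nabla F$ then forces $Du=Dw$ a.e.~on $\{u>w\}$, so $D(u-w)^+=0$, making $(u-w)^+$ constant on the connected torus; if this constant were positive, iterating $u(x)=u(x+h)+L|h|+c$ along multiples of $h$ would contradict boundedness of $u$. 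Hence $u\le u_h+L|h|$; swapping $h\to -h$ gives $|u(x+h)-u(x)|\le L|h|$.

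For uniqueness, any two minimizers $u_1,u_2$ give another minimizer $(u_1+u_2)/2$ by convexity, and strict convexity of $F$ forces $Du_1=Du_2$ a.e., so $u_2=u_1+c$ for some $c\in\R$; equality of the minimal values yields $\|u_1-v\|_1=\|u_1+c-v\|_1$, so the same $\lambda$ governs both Euler--Lagrange equations. Subtracting these (with identical divergence terms) forces the sign selections $\sigma_i\in\sign(u_i-v)$ to coincide a.e., which is impossible on $\{v-c<u_1<v\}$ (if $c>0$) as $\sigma_1=-1$ while $\sigma_2=+1$ there. Hence $|\{v-c<u_1<v\}|=0$; since $u_1-v$ is Lipschitz on the connected torus, the open set $\{-c<u_1-v<0\}$ of measure zero must actually be empty, and connectedness then forces $u_1\ge v$ or $u_1\le v-c$ everywhere. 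In either case a direct computation gives $\|u_1+c-v\|_1=\|u_1-v\|_1\pm c|\Om|$, which combined with the $L^1$ equality yields $c=0$.

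The main obstacle will be to justify the pointwise Euler--Lagrange identity and the test-function manipulations rigorously, since a priori $u\in BV(\Om)$ and $\sign$ is multi-valued. I would either invoke standard elliptic regularity for the prescribed mean curvature equation with bounded right-hand side on the torus to upgrade $u$ to $C^{1,\alpha}$, or regularise~\eqref{eq:graphmin} by adding a small strictly convex term $\eps\int|Du|^2\,dx$, run the above comparison on the resulting classical minimizers $u^\eps$, and pass to the limit $\eps\to 0$ with a Lipschitz bound uniform in $\eps$.
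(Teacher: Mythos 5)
Your strategy is sound but genuinely different from the paper's. You derive the comparison $u\le u_h+L|h|$ by subtracting the two Euler--Lagrange inclusions and testing with $(u-w)^+$, exploiting monotonicity of $\nabla F$ and of the graph $\sign$; the paper instead never touches the Euler--Lagrange equation for the comparison step: it first replaces the $L^1$ penalty by $\bigl(\int|u-v|^p\bigr)^{2/p}$ with $p>1$, proves a comparison principle for $\min\Phi(u)+\tfrac{\lambda}{p}\int|u-v|^p$ purely by comparing the energies of $u\vee u'$ and $u\wedge u'$ and using submodularity of $\Phi$ (this needs $p>1$, since $t\mapsto|t-v|^{p-2}(t-v)$ must be strictly decreasing in $v$), then applies the same translate-and-shift trick and lets $p\to1$. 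Your route buys a direct argument at $p=1$ (monotonicity of $\sign(u-v)$ in $u-v$ substitutes for strict monotonicity in $v$), at the price of needing enough regularity to make sense of $\nabla F(Du)$ pointwise and to integrate by parts against $(u-w)^+$ --- which, as you note, is not free since minimizers are a priori only $BV$. Your fallback ($\eps\int|Du|^2$ regularisation) is viable and is the exact analogue of the paper's $p$-regularisation, but note it only produces \emph{one} Lipschitz minimizer in the limit; the Lipschitz property of an \emph{arbitrary} minimizer must then be recovered from uniqueness, so the logical order is regularise $\to$ one Lipschitz minimizer $\to$ uniqueness $\to$ all minimizers Lipschitz, exactly as in the paper. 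The "standard elliptic regularity" alternative is not innocuous either: upgrading a $BV$ minimizer of a linear-growth functional with an $L^\infty$ variational mean curvature to $C^{1,\alpha}$ requires the interior gradient estimate for prescribed-mean-curvature graphs, which is a substantial input.

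There is one concrete gap in your uniqueness step: the claim that ``strict convexity of $F$ forces $Du_1=Du_2$ a.e., so $u_2=u_1+c$'' is not valid for $BV$ competitors. Since $F$ has linear growth, the relaxed functional carries the term $F^\infty\bigl(\tfrac{D^su}{|D^su|}\bigr)|D^su|=|D^su|$, which is $1$-homogeneous and not strictly convex, so convex interpolation only pins down the absolutely continuous parts of the gradients; a second minimizer could a priori differ by a nontrivial singular measure, not merely a constant. The paper closes this by using that one minimizer (the $p\to1$ limit) is Lipschitz with $D^su=0$, that $\|u-v\|_1=\|u'-v\|_1$ for any two minimizers (Remark~\ref{rem:surfmin}), and hence that equality of the total energies forces $\int_\Om F^\infty(D^su')=0$; only then does $u'=u+c$ follow. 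With that repair, your argument that $c=0$ (via the incompatible sign selections on $\{v-c<u_1<v\}$ and the explicit $\pm c|\Om|$ computation) is correct and is essentially an expanded version of the paper's remark that a continuous function on the connected torus has a unique median.
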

\begin{proof}
  It is enough to show it for the unique solution $u_p$, $p>1$, of:
   \begin{equation}\label{eq:appp}
    \min_u\Phi(u) + \frac{1}{2\tau}\left(\int_\Om |u-v|^pdx\right)^{2/p}
  \end{equation}
  since in the limit $p\to 1$ one recover a minimizer (hence \textit{the} minimizer)
  for $p=1$.

  We first show \new{a comparison result in a simplified setting}: let $v>v'$, let $u$ minimize, for some $\lambda>0$:
  \begin{equation}\label{eq:minp}
    \min_u \Phi(u) + \frac{\lambda}{p}\int_\Om |u-v|^p dx
  \end{equation}
  and let $u'$ solve the same problem with $v$ replaced with $v'$. Then, comparing
  the energy of $u$ with the energy of $u\vee u'$, and the energy of $u'$
  with the energy of $u\wedge u'$ and summing both  \new{inequalities} we end up (using~\eqref{eq:submod})
  with:
  \[
    \int_\Om |u'-v'|^pdx - \int_\Om |u\wedge u'-v'|^pdx \le
    \int_\Om |u\vee u'-v|^pdx -    \int_\Om |u-v|^pdx ,
  \]
  that is:
  \[
    \int_{\{u<u'\}} |u'-v'|^p-|u-v'|^p dx \le \int_{\{u<u'\}} |u'-v|^p-|u-v|^pdx. 
  \]
  One may \new{rewrite this as}:
  \[
    \int_{\{u<u'\}} \int_{u(x)}^{u'(x)} p|t-v'(x)|^{p-2}(t-v'(x)) - p|t-v(x)|^{p-2}(t-v(x))dx\le 0,
  \] 
  which, since $-v(x)<-v'(x)$, is not true unless $u\ge u'$ a.e.

  Now, assume $v$ is $L$-Lipschitz and let $u=u_p$ be the minimizer of~\eqref{eq:appp}.
  For $z\in\R^d$, $\eps>0$,
  let $v'(x) = v(x-z)-L|z|-\eps<v(x)$ and $u'(x)=u(x-z)-L|z|-\eps$ be the solution
  of~\eqref{eq:appp} with $v$ replaced with $v'$. The Euler-Lagrange
  \new{equations} for $v$
  and $v'$ are:
  \[
    \begin{cases}    -\partial \Phi(u) + \frac{1}{\tau}\left(\int_\Om |u-v|^pdx\right)^{2/p-1}|u-v|^{p-2}(u-v)=0,\\
      -\partial \Phi(u') + \frac{1}{\tau}\left(\int_\Om |u'-v'|^pdx\right)^{2/p-1}|u'-v'|^{p-2}(u'-v')=0
    \end{cases}
  \]
  hence letting $\lambda = \|u-v\|^{2-p}/\tau= \|u'-v'\|^{2-p}/\tau$, we find that
  $u$ is a minimizer of~\eqref{eq:minp} while $u'$ is a minimizer of the same problem
  with~$v$ replaced with $v'$. We deduce that $u'\le u$. Sending $\eps\to 0$, it follows that
  \[
    u(x-z)-L|z|\le u(x)\quad \forall x\in\Om, z\in\R^d
  \]
  which shows that $u$ is $L$-Lipschitz. \new{Letting $p\to 1$, we eventually
  find a $L$-Lipschitz solution to~\eqref{eq:graphmin}.}

  We now observe that if there is another \new{minimizer} $u'\in BV(\R^d/\Z^d)$
  \new{of~\eqref{eq:graphmin}}, by \new{strict} convexity arguments,
  the absolutely continuous part of the gradient must be the same as $Du$, and they can
  differ only by a singular part. 
  \new{In addition (Remark~\ref{rem:surfmin}-(\ref{rem-l1u})), $\|u-v\|_1=\|u'-v\|_1$ so that the energy
  of $u'$ is $\int_\Om F(D^a u')\,dx+\int_\Om F^\infty(D^s u') +\|u'-v\|_1^2 /(2\tau)= \int_\Om F(D^a u)\,dx+\int_\Om F^\infty(D^s u') +\|u-v\|_1^2 /(2\tau)$ so that $\int_\Om F^\infty(D^s u')=0$ and $u=u'$ (up to a possible constant, but then using that $v$ is Lipschitz and $\|u-v\|_1=\|u'-v\|_1$ shows that they cannot differ). Hence there is a unique minimizer of~\eqref{eq:graphmin}.}
   
\end{proof}
Hence, we obtain the following result:
\begin{theorem} Let $u^0$ a Lipschitz function over $\Om=\R^d/\Z^d$.
  Then the discrete motion converges to $u(t)\in C^0([0,T];L^p(\Om))$ for any $p<d/(d-1)$,
  with 
  $\int_{s}^\infty |D\dot u|^2dt\le C\Phi(u^0)/s$
  for any $s>0$, and  \new{$u$} satisfies
  \[
    \begin{cases}    |\kappa_u(x)|\le \|\dot u\|_1 & \text{ a.e.~in }\Om,\\
      -\dot{u}\kappa_u(x) = |\dot{u}| \|\dot u\|_1 & \text{ a.e.~in }\Om
    \end{cases}\quad
    \text{ for a.e. } t\ge 0,
  \]
  where $\kappa_u = \Div \left(D u/\sqrt{1+|D u|^2}\right)$ a.e.
\end{theorem}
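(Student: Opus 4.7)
The plan is to combine the Lipschitz-propagation lemma just proved with the strongly convex theory of Section~\ref{sec:sc}. First, I would show by induction on $n$ that every iterate $u^n$ of the minimizing movement scheme starting from the $L$-Lipschitz initial datum $u^0$ is itself $L$-Lipschitz: the base case is precisely the preceding lemma applied with $v=u^0$, and since each $u^n$ is then an admissible Lipschitz datum in~\eqref{eq:graphmin}, the inductive step follows by applying the lemma again. Hence the piecewise-constant interpolants $u_\tau$ and piecewise-affine interpolants $\hat u_\tau$ are uniformly $L$-Lipschitz in $x$ for all $t\ge 0$, and their subsequential limit $u$ provided by Theorem~\ref{th:minMM} and Proposition~\ref{prop:interpol} belongs to $C^0([0,T];L^p(\Om))$ for any $p<d/(d-1)$, with each slice $u(t,\cdot)$ still $L$-Lipschitz.

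Next, I would reduce to the framework of Theorem~\ref{th:contevolsc}, which required $F$ to be $C^1$ and globally $\gamma$-convex with full domain. Here $F(p)=\sqrt{1+|p|^2}$ is smooth but only uniformly convex on bounded sets: on $\{|p|\le L\}$ its Hessian is bounded below by $\gamma=(1+L^2)^{-3/2}$. I would introduce an auxiliary energy $\tilde F$ agreeing with $F$ on $\{|p|\le L\}$ and extended to a globally $\gamma$-convex $C^1$ function on $\R^d$ (for instance by a suitable quadratic extension outside the ball). Since every admissible competitor with $\tilde \Phi(u^n)$-energy comparable to $\Phi(u^0)$ produces iterates whose gradients remain in $\{|p|\le L\}$ (and in particular the iterates of $\Phi$ and $\tilde\Phi$ coincide from $u^0$), the limit flow for $\Phi$ is also a limit flow for $\tilde\Phi$ in the sense of Section~\ref{sec:sc}.

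Applying Theorem~\ref{th:speedH1} to $\tilde\Phi$ then gives the estimate
\[
\int_s^{+\infty}\|D\dot u(t)\|_2^2\,dt\le \frac{\Phi(u^0)}{\gamma\, s},
\]
which is the desired bound with $C=1/\gamma$. Applying Theorem~\ref{th:contevolsc} yields the evolution system
\[
|\Div \nabla F(Du)|\le \|\dot u\|_1,\qquad
\dot{u}\,\Div \nabla F(Du) = |\dot{u}|\|\dot{u}\|_1\qquad\text{a.e.},
\]
and since $\nabla F(p)=p/\sqrt{1+|p|^2}$, we have $\Div\nabla F(Du)=\kappa_u$, giving the stated equations (up to the sign convention adopted for $\kappa_u$).

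The main obstacle is not in the abstract machinery but in the reduction step: one must verify that the cutoff of $F$ to a globally $\gamma$-convex function truly does not affect the discrete scheme and its limit. This is what the uniform Lipschitz bound on all iterates ensures — because $|Du^n|\le L$ a.e., only the values of $F$ on the compact set $\{|p|\le L\}$ enter the definition of each $u^n$, so the gradient flow equations derived for $\tilde\Phi$ translate verbatim into equations for the original minimal surface energy $\Phi$.
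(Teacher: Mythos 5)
Your proof is correct and follows essentially the same route as the paper: propagate the $L$-Lipschitz bound through all iterates by induction via the preceding lemma, then invoke the strongly convex theory of Section~\ref{sec:sc} using that $F(p)=\sqrt{1+|p|^2}$ has convexity modulus $\gamma=(1+L^2)^{-3/2}$ on $\{|p|\le L\}$. Your explicit auxiliary energy $\tilde F$ (which should be chosen with $\tilde F\ge F$ and equality on $\{|p|\le L\}$, so that the $L$-Lipschitz minimizers of the $\Phi$-scheme are automatically the minimizers of the $\tilde\Phi$-scheme) is simply a careful formalization of the step the paper states informally as ``since the gradients are all bounded by $L$, $F$ is $\gamma$-convex,'' and it has the added benefit of making Lemma~\ref{lem:subgrad} (which requires superlinearity) directly applicable.
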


\begin{remark}
  The proof of the existence of a \new{unique} $L$-Lipschitz solution on the torus when $u^0$ is
  $L$-Lipschitz 
  \new{only relies on the strict convexity of $F$, so that the results in this section
  are also true for any $F$ which is strongly convex on bounded subsets of $\R^d$.}
\end{remark}

\section{Monotone solutions} \label{sec:monotone}
In this section we consider
the case of Dirichlet boundary conditions ($\textup{dom}(\Phi) = {u^0}+ H_1^0(\Om)$),
and we assume that $u^0\in BV(\Om)$ a {\it subsolution}
in the following sense:
\begin{definition}\label{defsub} We say that $u^0\in BV(\Om)$ is a subsolution if
  for any $v\in BV(\Om)$ with $\{v\neq u^0\}\subset\subset \Om$, 
  we have
\[
  v\le u^0\Rightarrow \Phi(v)\ge \Phi(u^0).
\]
\end{definition}

\begin{lemma}\label{profi}
If $u^0$ is a subsolution then,
for any $v\in BV(\Om)$ with $\{v\neq u^0\}\subset\subset \Om$, 
we have
\[
  \Phi(\max\{u^0,v\}) \le \Phi(v).
\]
\end{lemma}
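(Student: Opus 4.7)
The plan is to combine the submodularity inequality~\eqref{eq:submod} with the definition of subsolution applied to $u^0 \wedge v$. The key observation is that the submodular inequality, which holds for the functionals in our class, couples $\Phi(u^0 \vee v)$ to $\Phi(u^0 \wedge v)$, and the latter can be bounded below via the subsolution property of $u^0$.

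More precisely, first I would apply~\eqref{eq:submod} to the pair $(u^0,v)$ to get
\[
\Phi(u^0\wedge v) + \Phi(u^0\vee v) \le \Phi(u^0)+\Phi(v).
\]
Then I would verify that $u^0\wedge v$ is admissible in Definition~\ref{defsub}: clearly $u^0\wedge v\le u^0$, and the set $\{u^0\wedge v\neq u^0\}=\{v<u^0\}\subseteq \{v\neq u^0\}\subset\subset\Om$, so the compact-support condition is preserved (and the boundary trace matches $u^0$). The subsolution hypothesis then gives $\Phi(u^0\wedge v)\ge \Phi(u^0)$. Substituting into the submodular inequality yields $\Phi(u^0\vee v)\le \Phi(v)$, which is the claim.

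There is no serious obstacle here: the argument is a one-line manipulation once submodularity and the definition are in hand. The only point that requires a moment of care is checking that $u^0\wedge v$ is a legitimate test function in the subsolution definition, in particular that the inclusion $\{u^0\wedge v\neq u^0\}\subset\subset\Om$ holds; this is immediate from $\{u^0\wedge v\neq u^0\}\subseteq \{v\neq u^0\}$ and the assumption on $v$. The $BV$ regularity of $u^0\wedge v$ follows from $u^0,v\in BV(\Om)$ since truncations preserve $BV$.
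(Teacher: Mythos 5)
Your argument is correct and coincides with the paper's own proof: both apply the submodularity inequality \eqref{eq:submod} to the pair $(u^0,v)$ and then use the subsolution property on $u^0\wedge v$ (which is admissible since $\{u^0\wedge v\neq u^0\}\subseteq\{v\neq u^0\}\subset\subset\Om$) to get $\Phi(u^0\wedge v)\ge\Phi(u^0)$ and conclude. Your explicit check of the admissibility of $u^0\wedge v$ is a detail the paper leaves implicit, but the route is the same.
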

\begin{proof}
Since $u_0$ is a subsolution, we know that $\Phi(\min\{u^0,v\})\ge \Phi(u^0)$.
Recalling that
\[
  \Phi(\min\{u^0,v\})+\Phi(\max\{u^0,v\}) \le \Phi(v)+\Phi(u^0),
\]
it follows that $\Phi(\max\{u^0,v\}) \le \Phi(v)$.

\end{proof}

Replacing $u^0$ with $\max\{u^0,u^1\}$ in the variational problem
which defines $u^1$, we find that $u^1\ge u^0$ a.e. in $\Om$;
in particular, the Euler-Lagrange equation reads:
\[
 \partial\Phi( u^1) + \frac{\|u^1-u^{0}\|_1}{\tau} \new{\varphi} = 0,\quad
  \varphi\in \sign(u^1-u^{0})\quad \text{a.e. in $\Om$.}
\]

\begin{proposition}\label{promono}
If $u^0$ is a subsolution then, for any $n\ge 1$,
$u^{n}\ge u^{n-1}$ and $u^n$ is also a subsolution.
\end{proposition}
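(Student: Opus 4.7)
The plan is to proceed by induction on $n\ge 1$. The monotonicity $u^1\ge u^0$ is already recorded in the paragraph preceding the statement; the core work is the inductive step. Assume $u^{n-1}$ is a subsolution; I will show (a) $u^n\ge u^{n-1}$ a.e., and (b) $u^n$ is itself a subsolution. Both parts follow from the same recipe: combine the minimality defining $u^n$ in~\eqref{eq:MMPhi} with the submodularity inequality~\eqref{eq:submod} and the subsolution hypothesis on $u^{n-1}$ applied to a suitable lattice operation.

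For (a), test~\eqref{eq:MMPhi} with the competitor $\tilde u := u^n\vee u^{n-1}$. All functions in play share the Dirichlet trace $u^0$ on $\partial\Om$, so $\tilde u\in\dom\Phi$. By~\eqref{eq:submod},
\[
\Phi(\tilde u)\;\le\;\Phi(u^n)+\Phi(u^{n-1})-\Phi(u^n\wedge u^{n-1}),
\]
and since $u^n\wedge u^{n-1}\le u^{n-1}$ and coincides with $u^{n-1}$ near $\partial\Om$, the subsolution hypothesis on $u^{n-1}$ gives $\Phi(u^n\wedge u^{n-1})\ge \Phi(u^{n-1})$, hence $\Phi(\tilde u)\le \Phi(u^n)$. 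For the penalty, $\tilde u - u^{n-1}=(u^n-u^{n-1})^+$, so $\|\tilde u-u^{n-1}\|_1\le\|u^n-u^{n-1}\|_1$, with strict inequality as soon as $\{u^n<u^{n-1}\}$ carries positive measure. The minimality of $u^n$ rules this out, yielding $u^n\ge u^{n-1}$ a.e.

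For (b), fix $v\in BV(\Om)$ with $v\le u^n$ and $\{v\ne u^n\}\subset\subset\Om$, and set $w := v\vee u^{n-1}$. Using (a) and $v\le u^n$, one has $u^{n-1}\le w\le u^n$; since $v$ already coincides with $u^n$ near $\partial\Om$ and $u^n=u^{n-1}$ there, also $\{w\ne u^{n-1}\}\subset\subset\Om$, so $w$ is admissible in~\eqref{eq:MMPhi}. The sandwich $0\le w-u^{n-1}\le u^n-u^{n-1}$ implies $\|w-u^{n-1}\|_1\le\|u^n-u^{n-1}\|_1$, hence minimality gives $\Phi(u^n)\le\Phi(w)$. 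A second application of~\eqref{eq:submod} to the pair $(v,u^{n-1})$, together with the subsolution property of $u^{n-1}$ applied to $v\wedge u^{n-1}$, gives $\Phi(w)=\Phi(v\vee u^{n-1})\le\Phi(v)$. Chaining the two inequalities yields $\Phi(u^n)\le\Phi(v)$, which is the subsolution property for $u^n$.

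The main hurdle is purely bookkeeping: one must check that each of $u^n\vee u^{n-1}$, $u^n\wedge u^{n-1}$, $v\vee u^{n-1}$, $v\wedge u^{n-1}$ belongs to $\dom\Phi$ and meets the compact-deviation requirement of Definition~\ref{defsub} whenever the subsolution property is invoked. Both reduce to the shared Dirichlet trace $u^0$ on $\partial\Om$ and to the hypothesis $\{v\ne u^n\}\subset\subset\Om$, properties that are transferred to $\max$ and $\min$. Once these verifications are made, the rest of the proof is the clean algebraic chaining sketched above.
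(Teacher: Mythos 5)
Your argument is correct in substance but takes a partly different route from the paper's. For the monotonicity $u^n\ge u^{n-1}$ you do exactly what the paper does (compare $u^n$ with $u^n\vee u^{n-1}$ in~\eqref{eq:MMPhi}, using~\eqref{eq:submod} and the subsolution property of $u^{n-1}$ to kill the $\Phi$-term and the strict decrease of the penalty to conclude). For the propagation of the subsolution property, however, the paper invokes the Euler--Lagrange equation: it writes the subgradient inequality $\Phi(\max\{u^0,v\})\ge \Phi(u^1)-\frac{\|u^1-u^0\|_1}{\tau}\int_\Om\varphi\,(\max\{u^0,v\}-u^1)\,dx$ with $\varphi\in\sign(u^1-u^0)$ and checks by a sign analysis that the integral term is nonnegative, after first applying Lemma~\ref{profi} to get $\Phi(v)\ge\Phi(\max\{u^0,v\})$. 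You instead bypass the Euler--Lagrange equation entirely: the sandwich $u^{n-1}\le w=v\vee u^{n-1}\le u^n$ makes the penalty of $w$ no larger than that of $u^n$, so minimality alone gives $\Phi(u^n)\le\Phi(w)$, and then Lemma~\ref{profi} (for $u^{n-1}$) gives $\Phi(w)\le\Phi(v)$. This is a cleaner and more elementary variant, and it has the additional merit of testing the subsolution property of $u^n$ against perturbations of $u^n$ itself (the literal reading of Definition~\ref{defsub} applied to $u^n$), whereas the paper keeps testing against $\{v\ne u^0\}\subset\subset\Om$.

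One bookkeeping claim is wrong as stated: you assert that ``$u^n=u^{n-1}$ near $\partial\Om$''. The iterates only share the Dirichlet trace $u^0$ on $\partial\Om$; nothing forces them to coincide on a neighbourhood of the boundary. Fortunately this claim is not needed. Admissibility of $w$ in~\eqref{eq:MMPhi} only requires matching traces, and the application of the inductive hypothesis only requires $\{v\wedge u^{n-1}\ne u^{n-1}\}=\{v<u^{n-1}\}\subset\subset\Om$, which follows from $\{v<u^{n-1}\}\subset\{v<u^n\}\subset\{v\ne u^n\}\subset\subset\Om$ thanks to $u^{n-1}\le u^n$. You should make that substitution explicit. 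A related localization issue (that $\{u^n<u^{n-1}\}$ be compactly contained in $\Om$ so that the subsolution hypothesis applies to $u^n\wedge u^{n-1}$ in part (a)) is left implicit in your write-up, but the paper's own treatment of $u^1\ge u^0$ is equally terse on this point, so it is not a defect specific to your proof.
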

\begin{proof}
  This follows the proof
  of a similar result in~\cite{Laux} for mean-convex sets, see also Sec.~\ref{sec:out}.
By Lemma~\ref{profi}, for any $v\in BV(\Om)$ with $\{v\neq u^0\}\subset\subset \Om$, 
we have
\[
  \Phi(\max\{u^0,v\}) \le \Phi(v).
\]
Let $v\in BV(\Om)$ with $\{v\neq u^0\} \subset\subset\Om$, and assume $v\le u^1$.
We have
\begin{align*}
  \Phi(v) \ge \Phi(\max\{u^0,v\}) &
  \ge \Phi(u^1) + \int_\Om -\frac{\|u^1-u^{0}\|_1}{\tau} \varphi (\max\{u^0,v\}-u^1) dx
  \\ &=\Phi(u^1) +\frac{\|u^1-u^{0}\|_1}{\tau}  \int_{\{u^1>u^0\}} (u^1-\max\{u^0,v\}) dx \ge\Phi(u^1)
\end{align*}
showing that $u^1$ is also a subsolution, and the thesis follows by iterating the argument.
\end{proof}

Let us set now $\lambda_1= \|u^1-u^0\|_1/\tau$. We observe that for
any $v\ge u^0$ with $v-u^0$ with compact support, one has
\[
  \Phi(v) \ge \Phi(u^1) 
  + \lambda_1 \int_\Om -\varphi(v-u^1)dx. 
\]
Since $v\ge u^0$, we get 
\begin{multline*}
  \int_\Om -\varphi(v-u^1)dx = \int_{\{u^1>u^0\}} u^1-v \,dx
  + \int_{\{u^1=u^0\}} -\varphi (v-u^0) dx
\\  \ge \int_{\{u^1>u^0\}} u^1-v \,dx   - \int_{\{u^1=u^0\}} v-u^0 \,dx = \int_\Om u^1-v \,dx,
\end{multline*}
and we deduce that
\[
  \Phi(v)+\lambda_1\int_\Om v \,dx \ge \Phi(u^1) + \lambda_1\int_\Om u^1dx.
\]
It follows that $u^1$ is a solution of the obstacle problem (with Dirichlet boundary conditions)
\[
\min_{v\ge u^0}	\ \Phi(v) + \lambda_1\int_\Om v\,dx.
\]
Notice that, if $F$ has superlinear growth and is strictly convex, the solution
is unique.\footnote{When $F$ has linear growth such a statement is unclear, we only know
 that, for all $\lambda_1$ but a countable number, the solution is unique, otherwise
it is trapped in between a minimal and a maximal solution.}
Observe also that, 
if $v,v'$ are minimizers of the above obstacle problem 
for, respectively, two different non-negative parameters $\lambda$ and $\lambda'$, then the inequality
 \[
   \Phi(v)+\lambda \int_\Om v \,dx + \Phi(v') +\lambda'\int_\Om v'\,dx
   \le 
  \Phi(v\wedge v')+\lambda\int_\Om v\wedge v' \,dx+\Phi(v\vee v')
  +  \lambda'\int_\Om v\vee v' \,dx
\]
shows that $(\lambda-\lambda')\int_\Om (v-v')^+dx\le 0$. Hence, if $\lambda>\lambda'$
one has $v\le v'$.


\smallskip

Let us now introduce, for $m\ge 0$,  the volume function
\begin{equation}\label{eq:volf}
  f(m) := \min\left\{ \Phi(v): v\ge u^0, v=u^0\text{ on }\partial\Om,
    \int_\Om v-u^0 dx = m\right\}.
\end{equation}

From now on we shall assume that $F$ is strictly convex and superlinear. 
For any $\lambda\in\R$, we define $v^\lambda$ as the solution of the obstacle problem with parameter
$\lambda$, that is, the unique minimizer of
\begin{equation}\label{eq:obstacle}
  \min_{v\ge u^0, v=u^0\,\partial\Om} \Phi(v) + \lambda \int_\Om v \,dx.
\end{equation}
Observe that $\lim_{\lambda\to+\infty} v^\lambda =u^0$.
Thanks to the uniqueness of the solution and the comparison principle,
we observe that the domain $\mathcal{D}:=\{ (x,\new{z}): x\in \Om, u^0(x)< \new{z} < \sup_{\lambda\le 0}v^\lambda(x)\}$ is such that
for any $(x,\new{z})\in \mathcal{D}$, there is a unique $\lambda>0$ such that $\new{z}=v^\lambda(x)$. Indeed, since both $\sup_{\lambda'>\lambda} v^{\lambda'}$
and $\inf_{\lambda'<\lambda}v^\lambda$ are minimizers of~\eqref{eq:obstacle},
they must coincide for all $\lambda$. In particular, the function $$\lambda\mapsto
\int_\Om v^\lambda-u^0 dx=: m^\lambda$$ is continuous and decreasing,
going from $0$ as $\lambda\to+\infty$, to some maximal
value $\bar m \le +\infty$ as $\lambda\to \new{-\infty}$\footnote{If domain of $F$ is not the entire space, 
  the maximal
reachable mass $\bar m$ can be finite.}.
One can check easily that for any $m$, one has $f(m) = \Phi(v^\lambda)$ for
any $\lambda$ such that $m=m^\lambda$.
On the other hand, if $v'$ is another minimizer of~\eqref{eq:volf},
for $m=m^\lambda$,
then since $\Phi(v')+\lambda\int_\Om v'dx = \Phi(v^\lambda)+\lambda \int_\Om
v^\lambda dx$, $v'$ also minimizes~\eqref{eq:obstacle} and by uniqueness
$v'=v^\lambda$.

In addition, given $m,m'$ and corresponding $\lambda,\lambda'$, we have
\[
  f(m')+\lambda m' = \Phi(v^{\lambda'})+\lambda\int_\Om v^{\lambda'}-u^0dx
  \ge \Phi(v^\lambda)+\lambda \int_\Om v^\lambda-u^0 dx
= f(m)+\lambda m,
\]
showing that 
\begin{equation}\label{eqml}
\textrm{$f$ is convex and $-\lambda\in\partial f(m)$. }
\end{equation}

Observe that, in case $F$ is not superlinear or not strictly convex,
one can still build by approximation an increasing family of minimizers with increasing masses, 
minimizing the obstacle problem for
some non-increasing multipliers, but one might lose uniqueness.


\begin{proposition}\label{prolambda}
Let $F$ be strictly convex and superlinear, and let 
$u^0$ be a subsolution, then $u^n=v^{\lambda_n}$
for any $n\ge 1$, where $\lambda_n:=\|u^n-u^{n-1}\|_1/\tau$.
\end{proposition}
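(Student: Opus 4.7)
The plan is to proceed by induction on $n$. The base case $n=1$ is already contained in the excerpt: using the subsolution property of $u^0$ and Proposition~\ref{promono}, we saw that $u^1 \ge u^0$ and that $u^1$ minimizes $\min_{v\ge u^0}\Phi(v)+\lambda_1\int_\Om v\,dx$; since $F$ is strictly convex and superlinear, this obstacle problem admits a unique solution, so $u^1=v^{\lambda_1}$.

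For the inductive step, assume $u^n=v^{\lambda_n}$. By Proposition~\ref{promono}, $u^n$ is again a subsolution (with the same Dirichlet trace $u^0$ on $\partial\Om$), so the argument already performed for $u^1$ applies verbatim with $u^n$ in place of $u^0$: one obtains $u^{n+1}\ge u^n$ and that $u^{n+1}$ is the unique minimizer of
\[
  \min_{v\ge u^n,\ v=u^0\text{ on }\partial\Om}\ \Phi(v) + \lambda_{n+1}\int_\Om v\,dx.
\]

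Next, Theorem~\ref{th:controlspeed}(i) yields $\lambda_{n+1}\le\lambda_n$. The comparison principle for the obstacle problem in the parameter (established just before~\eqref{eq:volf}: $\lambda<\lambda'$ implies $v^\lambda\ge v^{\lambda'}$) then gives $v^{\lambda_{n+1}}\ge v^{\lambda_n}=u^n$. Hence $v^{\lambda_{n+1}}$ is admissible in the constrained problem above and, being the unique minimizer of the same functional over the larger set $\{v\ge u^0\}$, it is \emph{a fortiori} a minimizer over $\{v\ge u^n\}$. Uniqueness of that minimizer forces $u^{n+1}=v^{\lambda_{n+1}}$, closing the induction.

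No individual step is genuinely hard: the proof rests entirely on two monotonicity facts — monotonicity of the sequence $(\lambda_n)$ coming from Theorem~\ref{th:controlspeed}(i), and monotonicity of $\lambda\mapsto v^\lambda$ coming from the comparison estimate before~\eqref{eqml} — combined with the propagation of the subsolution property from Proposition~\ref{promono}. The only place where the standing hypotheses (strict convexity and superlinear growth of $F$) are really used is the uniqueness of the obstacle problem solutions, which is what lets us identify $u^{n+1}$ with $v^{\lambda_{n+1}}$ rather than merely trapping it between a minimal and a maximal solution; this is the main obstacle that would need to be revisited if one wanted to drop the strict convexity or superlinearity assumption.
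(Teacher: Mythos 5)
Your proof is correct and follows essentially the same route as the paper's: induction on $n$, using the monotonicity of $\lambda_n$ from Theorem~\ref{th:controlspeed}(i) and the comparison principle in $\lambda$ to get $v^{\lambda_{n+1}}\ge u^n$, then identifying the two minimizers by uniqueness. The only (inessential) difference is that you invoke uniqueness for the obstacle problem with obstacle $u^n$, whereas the paper compares energies to show $u^{n+1}$ and $v^{\lambda_{n+1}}$ both minimize the problem with obstacle $u^0$ and concludes from uniqueness there.
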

\begin{proof}
By the above analysis $u^n$ is the unique solution of
\[
  \min_{v\ge u^{n-1}} \Phi(v) + \lambda_n\int_\Om v dx
\]
with $\lambda_n=\|u^n-u^{n-1}\|_1/\tau$. We show by induction
that this is also $v^{\lambda_n}$,
knowing that it is true for $n=1$. Assume it holds
for $u^{n-1}$, then by comparison principle and the fact $\lambda_n$ is non-increasing
(see Theorem~\ref{th:controlspeed}), one has $v^{\lambda_n}\ge u^{n-1}$. Hence we get
\[
\Phi(u^n) + \lambda_n\int_\Om u^n dx\le\Phi(v^{\lambda_n}) + \lambda_n\int_\Om v^{\lambda_n} dx.
\]
But since $u^n\ge u^0$, the reverse inequality is also true, hence $u^n$ and $v^{\lambda_n}$
are both minimizers of the obstacle problem for $\lambda_n$. By uniqueness, we deduce that they coincide.
\end{proof}

Observe that \new{the value} $\lambda_n$ can
be also built as follows: given $\lambda_{n-1}$, when $\lambda$ decreases from $\lambda_{n-1}$ to
$0$, then $\|v^\lambda-u^{n-1}\|_1/\tau$ increases from $0$ to $\|v^0-v^{\lambda_{n-1}}\|_1/\tau>0$, and
there is a value in $(0,\lambda_{n-1})$ for which they coincide.
Moreover, letting $m_n = \int_\Om u^n-u^0 dx$, by Proposition~\ref{prolambda} and \eqref{eqml} we have
\begin{equation}\label{gradm}
\frac{m_{n}-m_{n-1}}{\tau} = \lambda_n \in -\partial f(m_n), 
\end{equation}
for any $n\ge 1$,
so that the sequence $(m_n)_n$ solves the discrete implicit Euler scheme for the gradient flow of the convex function $f$.

\begin{theorem}\label{th:monuq}
Let $F$ be strictly convex and superlinear, and let 
$u^0$ be a subsolution with $\Phi(u^0)<+\infty$. Then there exists a unique limit solution $u$ given by Theorem~\ref{th:minMM} with initial datum $u^0$.
Moreover, the function $u$ is non-decreasing in $t$, and $u(t)= v^{\lambda(t)}$ for a.e. $t> 0$, where $\lambda(t)\in L^2((0,+\infty))$ is positive and non-increasing.
\end{theorem}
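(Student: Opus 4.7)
The plan is to reduce the problem to a one-dimensional convex gradient flow for the mass $m(t):=\int_\Om (u(t)-u^0)\,dx$, exploiting the identifications carried out in Propositions~\ref{promono} and~\ref{prolambda}. By Proposition~\ref{promono}, the sequence $(u^n)$ is non-decreasing in $n$, so the interpolant $u_\tau(t)$ is non-decreasing in $t$, and since $L^1$-limits preserve this property, any limit $u$ from Theorem~\ref{th:minMM} is non-decreasing in time.

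By Proposition~\ref{prolambda} one has $u^n=v^{\lambda_n}$ with $\lambda_n=(m_n-m_{n-1})/\tau$, and by~\eqref{gradm} the discrete masses $m_n$ form an implicit Euler discretization of the gradient flow of the proper convex function $f$ defined in~\eqref{eq:volf}. Since the target is one-dimensional, the classical Brezis-K\^omura theory applies and yields a unique absolutely continuous curve $m\colon[0,+\infty)\to\R$ with $m(0)=0$, satisfying $\dot m(t)\in -\partial f(m(t))$ for a.e.~$t$ and $m_\tau(t)\to m(t)$ for every $t\ge 0$ (not merely along a subsequence). Setting $\lambda(t):=-\dot m(t)$, one obtains a non-negative function (since $m$ is non-decreasing) that is non-increasing in $t$ (by passing Theorem~\ref{th:controlspeed}(i.) to the limit), with $\int_0^{+\infty}\lambda^2\,dt\le 2\Phi(u^0)$ either by lower-semicontinuity from $\sum_n \tau\lambda_n^2\le 2\Phi(u^0)$, or directly from Theorem~\ref{th:dissip} using $|\dot u(t)|(\Om)=\dot m(t)=\lambda(t)$.

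To identify $u(t)=v^{\lambda(t)}$ and conclude uniqueness, I would use that, under strict convexity and superlinearity of $F$, the obstacle problem~\eqref{eq:obstacle} has a unique solution $v^\lambda$ for every $\lambda$, and that the map $\lambda\mapsto m^\lambda$ is continuous and strictly decreasing on its range. From $m_\tau(t)\to m(t)$ we deduce $\lambda_\tau(t)\to \lambda(t)$ for a.e.~$t$, and then the $L^1$-continuity of $\lambda\mapsto v^\lambda$ lets us pass to the limit in $u_\tau(t)=v^{\lambda_\tau(t)}$ to obtain $u(t)=v^{\lambda(t)}$ a.e. The main technical point is this $L^1$-continuity of $\lambda\mapsto v^\lambda$, including at possible jump points of $\partial f$: it is handled by the monotonicity structure (larger $\lambda$ gives smaller $v^\lambda$), which provides monotone one-sided limits in $\lambda$; by lower-semicontinuity of $\Phi$ and continuity in $\lambda$ of the linear term, both limits solve~\eqref{eq:obstacle} and must coincide with $v^\lambda$ by uniqueness. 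Since $m$ is uniquely determined by the Hilbertian gradient flow and $v^\lambda$ is uniquely determined by $\lambda$, the full family $u_\tau$ converges to $u$ as $\tau\to 0$, giving uniqueness of the limit evolution.
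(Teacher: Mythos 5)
Your proposal is correct and follows essentially the same route as the paper: monotonicity from Proposition~\ref{promono}, reduction via~\eqref{gradm} to the one-dimensional gradient flow $\dot m+\partial f(m)\ni 0$ whose uniqueness (Br\'ezis) forces convergence of the whole family, and identification $u(t)=v^{\lambda(t)}$ through the uniqueness of the obstacle/volume-constrained problem. The only cosmetic difference is that the paper first extracts limits of $\lambda_\tau$ and $m_\tau$ by Helly's theorem and then identifies them, whereas you invoke the convergence of the implicit Euler scheme directly and spell out the $L^1$-continuity of $\lambda\mapsto v^\lambda$ (a point the paper had already settled just before Proposition~\ref{prolambda}).
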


\begin{proof}
The monotonicity of $u$ in $t$ follows directly from Proposition~\ref{promono}.

Letting $\lambda_\tau(t) = \lambda_{\floor{t/\tau}+1}$ and $m_\tau(t) = m_{\floor{t/\tau}+1}$ for $t\ge 0$,
by Helly's Theorem we may assume that, up to a subsequence,
  $\lambda_\tau$ and $m_\tau$
  converge pointwise  to functions $\lambda(t)$ and $m(t)$  which are respectively non-increasing
  and non-decreasing. 
  By Proposition~\ref{prolambda} we then get that
$u_\tau(t)\to u(t)=v^{\lambda(t)}$   and $\int_\Om u_\tau(t)-u^0dx \to m(t) = m^{\lambda(t)}$ as $\tau\to 0$,
for a.e. $t> 0$.

Recalling \eqref{gradm} we also have that 
$m$ is the unique solution of the gradient flow
\[\dot m + \partial f(m)\ni 0 \]
with initial value $m(0)=0$, see for instance~\cite{BrezisOMM}.
It follows that $u(t)$ is the solution of~\eqref{eq:volf}
for $m=m(t)$, and since the latter is unique, we deduce that also
the limit flow $u(t)$ is unique, and that $u_\tau\to u$ as $\tau\to 0$, without passing to a subsequence.
The fact that
$\lambda\in L^2((0,+\infty))$ \new{follows} by the dissipation
estimate~\new{\eqref{eq:dissip}}.
\end{proof}

\begin{remark}
Observe that
$\|u^{n\tau}-u^{m\tau}\|_1 = \tau\sum_{l=m+1}^n\lambda_l=
\int_{m\tau}^{n\tau}\lambda_\tau(s)ds$, hence 
\[
\|u(t_2)-u(t_1)\|_1=\int_{t_1}^{t_2}\lambda(s)ds\qquad
\textrm{for all $0\le t_1<t_2$,}
\]
which is equivalent to
\begin{equation}\label{eq:vincololambda}
  m(t) = \int_\Om v^{\lambda(t)} (x) - u^0(x)dx = \int_0^t \lambda(s) ds\qquad  \textrm{for all $t\ge 0$}.
\end{equation}
%
If $F$ is of class $C^1$ \new{in $\R^d$ and superlinear}, recalling that the functions $v^\lambda$ satisfy
\[
-\Div\nabla F(Dv^\lambda) +\lambda=0\qquad \textrm{a.e.~in $\{v^\lambda>u^0\}$}
\]
\new{(see Lemma~\ref{lem:subgradDiri})}, equation \eqref{eq:vincololambda} implies \eqref{eq:contevolsc}.
\end{remark}

\begin{remark}
If $F$ is of class $C^1$ \new{in $\R^d$ and superlinear} 
one can check that two different values of $\lambda$ yield different functions
(when $v^\lambda>u^0$, since in that case one has
$-\Div\nabla F(Dv^\lambda) +\lambda=0$ a.e.~in $\{v^\lambda>u^0\}$). Then, using that
$t\mapsto u(t)=v^{\lambda(t)}$ is H\"older continuous in $L^1(\Om)$ by Theorem~\ref{th:minMM},
it follows that $\lambda(t)$ is continuous. 
\end{remark}

\section{The Dirichlet energy}\label{sec:dirichlet}
In this section, we consider the simplest case $F(\xi)=|\xi|^2/2$, so that
\[
  \Phi(u) = \frac{1}{2}\int_\Om |D u|^2
\]
is the Dirichlet energy of $u$.

In what follows, we shall consider either the case of Dirichlet boundary conditions ($\textup{dom}(\Phi) = {u^0}+ H^1_0(\Om)$),
or the case of homogeneous Neumann boundary conditions ($\textup{dom}(\Phi) = H^1(\Om)$).


\subsection{Uniqueness} 
Assuming that $\Phi(u^0) <+\infty$, the limit solution $u$ provided by Theorem~\ref{th:minMM} satisfies
\begin{equation}\label{eq:dissipDiri}
  \int_\Om |D u(t,x)|^2dx + \frac{1}{2} \int_0^t \|\dot{u}(s)\|_1^2 ds
  + \frac{1}{2}\int_0^t \|\Delta u(s)\|^2_\infty ds \le \int_\Om|D u^0|^2dx,
\end{equation}
which is~\eqref{eq:dissip}, and we take into account (\textit{cf}~Theorem~\ref{th:speedH1}) that $\dot{u}\in L^\infty([t,+\infty];H^1(\Om))$ for any $t>0$,
and $\partial\Phi(u(t))=\{-\Delta u(t)\}$ for a.e.~\new{$t\in (0,+\infty)$}.
As usual, this can be rewritten:
\[
  \int_0^t \left(\int_\Om D u(s,x)\cdot D \dot{u}(s,x) dx
  + \frac{1}{2} \|\dot{u}(s)\|_1^2 
  + \frac{1}{2} \|\Delta u(s)\|^2_\infty \right)ds\le 0,
\]
which \new{yields} $\Delta{u}(s)\in \|\dot{u}\|_1\sign(\dot{u})$
a.e. in $\Om$, for a.e.~$s\in [0,T]$ \new{(for any $T>0$)}, and we obtain the equations
\begin{equation}\label{eq:contevol}
  \begin{cases} |\Delta u|\le \|\dot{u}\|_1& \text{a.e.~in }\Om \\
    \dot{u}\Delta u = |\dot{u}|\|\dot{u}\|_1 & \text{a.e.~in }\Om
  \end{cases} \quad\textup{ a.e.~in } [0,T],
\end{equation}
\textit{cf}~\eqref{eq:contevolsc}.

It turns out that this defines a \textit{unique} evolution starting from
$u^0\in H^1(\Om)$. 
Indeed, for different solutions $u,v$ of \eqref{eq:contevol} we have
\begin{equation}\label{eq:shrinkDu}
  \frac{d}{dt}\int_\Om |D u-D v|^2dt
  = 2\int_\Om (D u-D v)\cdot D (\dot{u}-\dot{v})dx
  =-2\int_\Om (\Delta u-\Delta v)\cdot (\dot{u}-\dot{v})dx\le 0.  
\end{equation}

\begin{theorem}\label{eq:unidiri}
  For any $u^0\in H^1(\Om)$, there is a unique flow $u\in C^0([0,+\infty);H^1(\Om))$
  which solves~\eqref{eq:contevol}. In addition, the minimizing movements $\hat u_\tau$ converge
  to $u$ in $C^0((0,+\infty);H^1(\Om))$ (\new{\emph{i.e.}}, locally uniformly in
  time), as $\tau\to 0$. The semi-norm of the speed
  $\|D\dot u\|_2$ is non-increasing in time.
\end{theorem}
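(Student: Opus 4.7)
The plan proceeds in four stages. Existence of a flow satisfying~\eqref{eq:contevol} together with the dissipation~\eqref{eq:dissipDiri} is already in hand: it follows from Theorem~\ref{th:contevolsc} applied to $F(\xi)=|\xi|^2/2$, which is $C^1$, $1$-strongly convex, and has full domain. The limit of minimizing movements $u$ belongs to $C^0([0,+\infty);L^p(\Om))$ for every $p<d/(d-1)$, with $\dot u\in L^2_{\loc}((0,+\infty);H^1(\Om))$.

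Uniqueness is the first substantive step. Given two solutions $u,v$ of~\eqref{eq:contevol} with $u(0)=v(0)=u^0$, I would read the pointwise equation $\Delta u\in\|\dot u\|_1\sign(\dot u)$ as saying exactly that $-\Delta u$ lies in the subgradient at $\dot u$ of the convex functional $w\mapsto\tfrac12\|w\|_1^2$ on $L^1(\Om)$, and similarly for $v$. Monotonicity of this subgradient gives $\int_\Om(\Delta u-\Delta v)(\dot u-\dot v)\,dx\ge 0$. Integration by parts (legal since $u-v\in H^1_0(\Om)$ in the Dirichlet case, and since the Neumann boundary term vanishes) then turns~\eqref{eq:shrinkDu} into $(d/dt)\|Du-Dv\|_2^2\le 0$, so $Du(t)=Dv(t)$ a.e.~for every $t$. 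In the Dirichlet case this immediately gives $u=v$; in the Neumann case one has $v-u=c(t)$ with $c$ continuous and $c(0)=0$, and I would rule out any drift of $c$ by combining the pointwise structure $\Delta u=\|\dot u\|_1\sign(\dot u)$ on $\{\dot u\ne 0\}$ with the identity $\|\dot v\|_1=\|\dot u\|_1$ (forced by $\Delta u=\Delta v$ on a set where $\dot u>0$) and the constraint $\int_\Om\Delta u\,dx=0$.

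For the $C^0([0,T];H^1)$ convergence (locally in $t>0$), uniqueness promotes the subsequential convergence of Theorem~\ref{th:minMM} to convergence of the whole family $\hat u_\tau$. Since $\Phi(\hat u_\tau(t))\le\Phi(u^0)$ uniformly, $\hat u_\tau(t)\rightharpoonup u(t)$ weakly in $H^1(\Om)$ for each $t$. The upgrade to strong convergence boils down to $\Phi(\hat u_\tau(t))\to\Phi(u(t))$: lower semicontinuity gives $\liminf$, while the matching $\limsup$ is obtained by passing to the limit in the discrete dissipation and using that the limit is a curve of maximal slope (the equality~\eqref{eqslope} rigorously established at the end of Section~\ref{sec:sc}). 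Norm plus weak convergence yields strong $H^1$-convergence at every $t$, and the uniform $L^1$-H\"older continuity of $\hat u_\tau$ together with a further equicontinuity estimate (e.g.~obtained from~\eqref{eq:h1speed} and Poincar\'e) promotes this to local uniform convergence in $t$.

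Finally, for monotonicity of $\|D\dot u\|_2$, I would use that the shifted flow $t\mapsto u(t+h)$ solves~\eqref{eq:contevol} with initial datum $u(h)\in H^1(\Om)$; applying the uniqueness estimate~\eqref{eq:shrinkDu} to the pair $(u(\cdot),u(\cdot+h))$ shows that $t\mapsto\|Du(t+h)-Du(t)\|_2^2$ is non-increasing, and dividing by $h^2$ and sending $h\to 0^+$ (Fatou at any Lebesgue point of $\|D\dot u\|_2^2$) concludes. I expect the main obstacle to lie in the Neumann case of uniqueness, since the $H^1$-seminorm argument alone does not see the additive constant; excluding drift demands a careful case analysis of the ODE~\eqref{eq:contevol} under the shift $v=u+c(t)$.
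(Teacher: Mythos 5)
Your overall architecture coincides with the paper's: uniqueness comes from the contraction \eqref{eq:shrinkDu}, subsequential convergence of the minimizing movements is upgraded to full convergence by uniqueness, and the monotonicity of $\|D\dot u\|_2$ is obtained by applying the contraction to the pair $(u(\cdot),u(\cdot+h))$ and letting $h\to 0$ --- this last step is verbatim the paper's argument. Your route to strong $C^0_{\loc}((0,+\infty);H^1(\Om))$ convergence, via convergence of the energies $\Phi(\hat u_{\tau}(t))\to\Phi(u(t))$ (dissipation identity plus lower semicontinuity) combined with weak convergence, is a legitimate and in fact more detailed alternative to the paper's terser appeal to the equicontinuity estimate derived from \eqref{eq:h1speedunbounded}. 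A harmless slip: it is $\Delta u$, not $-\Delta u$, that lies in the subgradient of $\tfrac12\|\cdot\|_1^2$ at $\dot u$; the monotonicity inequality you then write down is nevertheless the correct one.

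The genuine gap is exactly where you suspect it: the Neumann case of uniqueness. The ingredients you list do not close the argument, and the identity ``$\|\dot v\|_1=\|\dot u\|_1$ forced by $\Delta u=\Delta v$ on a set where $\dot u>0$'' presupposes both that such a set has positive measure and that $\dot v=\dot u+\dot c$ keeps the same sign there, which is part of what must be proved. The paper's proof splits on whether $\{\Delta u(t)=0\}$ has positive measure. If it does, the pointwise relations $|\dot u|\,\|\dot u\|_1=0$ and $|\dot u+\dot c|\,\|\dot u+\dot c\|_1=0$ on that set force $\dot c=0$ after a short dichotomy. If instead $|\{\Delta u=0\}|=0$, then $\int_\Om\Delta u\,dx=0$ (Neumann) splits $\Om$ into two sets of positive measure on which respectively $\dot u\ge 0$ and $\dot u+\dot c\le 0$; if $\dot c>0$ the essential range of $\dot u(t)$ would omit the whole interval $(-\dot c,0)$ while both $\{\dot u\ge 0\}$ and $\{\dot u\le -\dot c\}$ have positive measure, which is impossible for an $H^1$ function on a connected domain (by coarea, almost every level set $\{\dot u>s\}$ with $s\in(-\dot c,0)$ would have zero perimeter, hence measure $0$ or $|\Om|$). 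The decisive input that is absent from your sketch is precisely this use of the regularity $\dot u(t)\in H^1(\Om)$ supplied by Theorem~\ref{th:speedH1}, together with the connectedness of $\Om$; without it the ``careful case analysis'' you defer cannot be completed.
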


\begin{proof} The uniqueness follows from~\eqref{eq:shrinkDu} in the case
  of Dirichlet boundary conditions. In the case of Neumann conditions,
  assume we have two different solutions $u(t)$ and $u(t)+c(t)$, $c(t)\in \R$ (with $c\in H^1(\R_+)$
  thanks to~\eqref{eq:dissipDiri}).
  The equations state, then, that for a.e.~$t$ and a.e.~$x\in\Om$,
  both  $\dot{u}\Delta u = |\dot{u}|\|\dot u\|_1$ and
  $(\dot{u}+\dot{c})\Delta u = |\dot{u}+\dot{c}|\|\dot u+\dot{c}\|_1$.
  In case $\{\Delta u=0\}$ has positive measure, we deduce that either $\|\dot u\|_1=0$
  but then $0=|\dot c|^2|\Om|$ on a set of positive measure, meaning $\dot c=0$,
  or $|\dot u|=0$ on a set of positive measure and on the same set, $|\dot c| \|\dot u+\dot{c}\|_1=0$.
  Hence again, $\dot c =0$ (or we would have $\dot u\equiv -\dot{c}\neq 0$ a.e., a contradiction).

  Hence, we assume $|\{\Delta u=0\}|=0$. Since in addition, $\int_\Om \Delta u=0$ (in the
  case of Neumann conditions), $\Om$ is split into two sets $\Om^\pm$ of positive measure,
  with $\Delta u>0$ a.e.~in $\Om^+$, hence $\dot u\ge 0$, and $\Delta u<0$ a.e.~in $\Om^-$,
  hence $\dot u+\dot c\le 0$. This contradicts $\dot u\in H^1(\Om)$ if $\dot c>0$, since one would
  have $|\{-\dot c\le \dot u \le 0\}|=0$. Symmetrically, one cannot have $\dot c<0$, hence
  $\dot c=0$. We deduce that $c(t)=0$ (since $c$ must be continuous with $c(0)=0$), and
  \new{this} proves uniqueness in the Neumann case.

  The convergence of $\hat u_\tau$ to the unique possible limit $u$ is guaranteed by
  Theorem~\ref{th:minMM} and Proposition~\ref{prop:interpol}, at least in
  $C^0([0,T];L^p(\Om))$ for any $T>0$ and $p<d/(d-1)$. In addition, it
  follows from \eqref{eq:h1speedunbounded} by standard arguments
  that $\|D\hat u_\tau(t)-D\hat u_\tau(s)\|^2_2 \le 2/ \min\{t,s\}\int_\Om |Du^0|^2dx$,
  from which we also deduce the uniform convergence on any interval $[t,T]$, $T>t>0$.  
  
  Eventually, the fact that the speed is non-increasing in $H^1$ follows from the fact that,
  using~\eqref{eq:shrinkDu},
  $\|Du(t+\eps)-Du(t)\|_2\le \|Du(s+\eps)-Du(s)\|_2$ for any $t>s>0$ and any $\eps>0$.
\end{proof}

We observe that the contraction property in the \new{time-}continuous setting also has a counterpart
for the discrete flow:
\begin{lemma}\label{lem:contract}
  Let $v,v'\in L^1(\Om)$, $v-v' \in H^1(\Om)$ (resp. $H^1_0(\Om)$ in the case of Dirichlet boundary conditions)
  and assume $u$ is a minimizer of
  \[
    \frac{1}{2\tau}\|u-v\|_1^2 + \Phi(u)
  \]
  and $u'$ a minimizer of the same problem with $v$ replaced with $v'$.
  Then:
\begin{equation}\label{eq:H1contract}
  \frac{1}{2} \int_\Om  |D (u- u')|^2dx\le
  \frac{1}{2} \int_\Om |D (v-v')|^2 dx
  - \frac{1}{2} \int_\Om |D (u-u'-v+v')|^2 dx.
\end{equation}
  and in particular
  \[
    \|D u -D u'\|_2 \le \|D (v-v')\|_2.
  \]
\end{lemma}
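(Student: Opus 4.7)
The plan is to test the difference of the Euler--Lagrange equations for $u$ and $u'$ against a well-chosen function, and then invoke the monotonicity of the subdifferential of the convex map $w\mapsto \tfrac{1}{2}\|w\|_1^2$ to control the resulting cross term.

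First I would write, using~\eqref{eq:ELcompactform} together with $\partial\Phi=\{-\Delta\,\cdot\,\}$ for the Dirichlet energy, that the minimizers satisfy
\[
-\Delta u + \frac{\|u-v\|_1}{\tau}\,\varphi = 0, \qquad -\Delta u' + \frac{\|u'-v'\|_1}{\tau}\,\varphi' = 0,
\]
with $\varphi\in\sign(u-v)$ and $\varphi'\in\sign(u'-v')$ a.e. The natural test function is $\psi := (u-u')-(v-v')$, which lies in $H^1(\Om)$ in the Neumann case, and in $H^1_0(\Om)$ in the Dirichlet case: indeed there, $u-v$ and $u'-v'$ both belong to $H^1_0(\Om)$ as a consequence of the boundary traces inherited from the minimization, and the hypothesis $v-v'\in H^1_0(\Om)$ ensures that $u$ and $u'$ themselves share the same trace. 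Subtracting the two equations, testing against $\psi$, and integrating by parts (with either the natural Neumann condition or the zero trace of $\psi$ killing the boundary term) produces
\[
\int_\Om D(u-u')\cdot D\psi\,dx + \int_\Om\left(\frac{\|u-v\|_1}{\tau}\varphi-\frac{\|u'-v'\|_1}{\tau}\varphi'\right)\psi\,dx = 0.
\]

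Setting $w=u-u'$ and $z=v-v'$, I would then use the algebraic identity $Dw\cdot D(w-z) = \tfrac12|Dw|^2 - \tfrac12|Dz|^2 + \tfrac12|Dw-Dz|^2$ to recast the first integral as the sum of the three terms appearing in~\eqref{eq:H1contract}. For the second integral, the key observation is that $\tau^{-1}\|u-v\|_1\,\varphi\in \tau^{-1}\partial\bigl(\tfrac12\|\cdot\|_1^2\bigr)(u-v)$ and likewise for the primed version; the subdifferential of the convex function $w\mapsto \tfrac12\|w\|_1^2$ being monotone, the integral
\[
\int_\Om\left(\frac{\|u-v\|_1}{\tau}\varphi-\frac{\|u'-v'\|_1}{\tau}\varphi'\right)\bigl((u-v)-(u'-v')\bigr)\,dx
\]
is nonnegative. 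Combining the two pieces yields~\eqref{eq:H1contract} directly, and dropping the nonnegative term $\tfrac12\|D(u-u'-v+v')\|_2^2$ gives the stated $L^2$-bound on $D(u-u')$.

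The only delicate step I foresee is checking admissibility of $\psi$ as a test function in the Dirichlet setting, which is precisely where the hypothesis $v-v'\in H^1_0(\Om)$ is used; once that point is settled from the definition of $\dom\Phi$, the rest is just an algebraic rearrangement together with the convexity of $w\mapsto \tfrac12\|w\|_1^2$.
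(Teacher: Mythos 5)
Your proposal is correct and follows essentially the same route as the paper: subtract the two Euler--Lagrange equations, test against $(u-v)-(u'-v')$, integrate by parts, and use the monotonicity of the subgradient of $w\mapsto\tfrac12\|w\|_1^2$ to discard the cross term; the final algebraic identity you use is just the expanded form of the paper's step $\int_\Om |D(u-u')|^2\,dx\le\int_\Om D(u-u')\cdot D(v-v')\,dx$.
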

\begin{proof}
  Subtracting the Euler-Lagrange equations for $u$ and $u'$, multiplying
  by $(u-v)-(u'-v)$ and integrating by parts, we get:
\[
  \int_\Om  (D u-D u')\cdot(D (u - v  - u' + v') dx\le 0
\]
thanks to the monotonicity of the subgradient of $\|\cdot\|^2_1/2$.
It follows
\[
  \int_\Om  |D (u- u')|^2dx\le \int_\Om D (u-u') \cdot D (v-  v') dx
\]
from which we deduce~\eqref{eq:H1contract}.
\end{proof}

  Specializing~\eqref{eq:H1contract} to the case $v=u^n$, $v'=u^{n-1}$,
  for $n\ge 1$, we find:
  \begin{equation}\label{eq:H1decrease}
   \int_\Om  |D (u^{n+1}- u^n)|^2dx\le
   \int_\Om |D (u^{n}-u^{n-1})|^2 dx
  - \int_\Om |D (u^{n+1}-2u^n+u^{n-1})|^2 dx.
  \end{equation}
  which shows that also for the discrete flow one has that $\|D\dot{\hat u}_\tau\|_2$ is non-increasing
  in time.

  \subsection{Energy decay estimate}
  In the case of homogeneous Neumann or Dirichlet boundary conditions, we expect that
  $\lim_{t\to\infty}\int_\Om |Du(t)|^2dx=0$. Actually, we
  \new{we can even provide the rate of convergence}:
  \begin{proposition}
    Let $u$ solve~\eqref{eq:contevol}, with $u^0\in H^1(\Om)$ (with Neumann boundary conditions) or $u^0\in H^1_0(\Om)$ (with Dirichlet boundary conditions). 
    Then, for any $t>0$ we have
\[
  \int_\Om |D u(t)|^2 dx \le
  e^{-\frac{t}{c_\Om}}\int_\Om |D u^0|^2 dx,
\]
where $c_\Om$ is the constant in the Poincar\'e-Wirtinger (Neumann) or Poincar\'e (Dirichlet)
inequality.    
\end{proposition}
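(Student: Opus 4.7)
The plan rests on two ingredients: a sharp identity for the dissipation rate of $\int_\Om|Du|^2dx$ along the flow, and a Poincar\'e-type bound of this quantity by the squared speed $\|\dot u(t)\|_1^2$. A Gr\"onwall argument then closes the loop.

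\emph{Dissipation rate.} Because $F(\xi)=|\xi|^2/2$ is strongly convex with full domain, Theorem~\ref{th:contevolsc} and the chain rule~\eqref{eqslope} apply in our setting. Using $\partial\Phi(u)=\{-\Delta u\}$ together with the Euler--Lagrange relation $\dot u\,\Delta u=|\dot u|\|\dot u\|_1$ of~\eqref{eq:contevol}, we obtain
\[
\frac{d}{dt}\Phi(u(t))=-\|\dot u(t)\|_1^2 \qquad\text{for a.e. }t>0.
\]

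\emph{Poincar\'e upper bound for $\Phi$.} Integration by parts gives
\[
\int_\Om|Du|^2dx = -\int_\Om (u-\bar u)\,\Delta u\,dx,
\]
with $\bar u=0$ in the Dirichlet case (since $u\in H^1_0(\Om)$) and $\bar u=|\Om|^{-1}\int_\Om u\,dx$ in the Neumann case (using $\int_\Om\Delta u=0$). Combining H\"older's inequality with $\|\Delta u\|_\infty\le\|\dot u\|_1$ from~\eqref{eq:contevol},
\[
\int_\Om|Du|^2dx\le \|u-\bar u\|_1\,\|\dot u\|_1.
\]
Applying the Poincar\'e (Dirichlet) resp. Poincar\'e--Wirtinger (Neumann) inequality in the form $\|u-\bar u\|_1^2\le c_\Om\int_\Om|Du|^2dx$ yields $\int_\Om|Du|^2dx\le c_\Om\|\dot u(t)\|_1^2$.

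\emph{Gr\"onwall.} The two estimates combine into a differential inequality of the form
\[
\frac{d}{dt}\int_\Om|Du(t)|^2dx \le -\frac{C}{c_\Om}\int_\Om|Du(t)|^2dx
\]
for an explicit positive constant $C$ absorbed in the normalization of $c_\Om$, and integration produces the stated exponential decay.

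The main obstacle is the rigorous use of the chain rule~\eqref{eqslope} when $\dot u$ is a priori only a bounded measure; this however has already been dealt with in Section~\ref{sec:sc} under strong convexity of $F$, which is exactly our case, so it applies without further work. The remaining Poincar\'e manipulations are routine and treat the Dirichlet and Neumann settings in parallel as soon as $\bar u$ is read accordingly.
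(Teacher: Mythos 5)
Your argument is correct, but it is genuinely different from the paper's. The paper stays at the discrete level: for each step of the minimizing movement scheme it compares the energy of $u^n$ with that of the explicit competitor $u^{n-1}+a\tau(u^{n-1}-m^{n-1})$, uses Poincar\'e to control the penalty term $\|u^{n-1}-u_a\|_1^2\le a^2\tau^2 c_\Om\int_\Om|Du^{n-1}|^2$, optimizes over $a$ (taking $a=-1/c_\Om$) to get the one-step decay factor $1-\tau/c_\Om+\tau^2/c_\Om^2$, and passes to the limit $n\tau\to t$. That route needs only minimality and Poincar\'e --- no Euler--Lagrange equation, no chain rule --- and applies to the MM limit directly. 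You instead work on the continuous equation~\eqref{eq:contevol}: the identity $\frac{d}{dt}\Phi(u(t))=-\|\dot u(t)\|_1^2$ (via~\eqref{eqslope} and $\dot u\,\Delta u=|\dot u|\,\|\dot u\|_1$), the duality bound $\int_\Om|Du|^2\,dx=-\int_\Om(u-\bar u)\Delta u\,dx\le\|u-\bar u\|_1\|\dot u\|_1$, Poincar\'e in the $L^1$-versus-$L^2$-gradient normalization used by the paper, and Gr\"onwall. This costs you the machinery of Section~\ref{sec:sc} (rigorous chain rule, $\dot u(t)\in H^1$, local Lipschitz continuity of $t\mapsto\Phi(u(t))$ from Corollary~\ref{cor:LipPhi}, plus the identification of any solution of~\eqref{eq:contevol} with the MM limit via Theorem~\ref{eq:unidiri}), but it buys a sharper rate: since $\frac{d}{dt}\int_\Om|Du|^2\,dx=-2\|\dot u\|_1^2\le-\frac{2}{c_\Om}\int_\Om|Du|^2\,dx$, your constant is $C=2$ and you actually obtain $e^{-2t/c_\Om}$, which implies the stated bound. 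You should make that constant explicit rather than leaving it as ``an explicit positive constant $C$'': the conclusion only follows because $C\ge 1$, so this is the one point where vagueness could hide a real issue (here it does not). A minor further remark: to run Gr\"onwall down to $t=0$ one should integrate on $[\eps,t]$ and use $\Phi(u(\eps))\le\Phi(u^0)$ before letting $\eps\to0$, since the Lipschitz bounds of Corollary~\ref{cor:LipPhi} are only local away from $0$ when $\partial\Phi(u^0)=\emptyset$.
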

\begin{proof}
  We consider the minimizing movement scheme, and, given $\tau>0$, $n\ge 1$,
  we compare the energy of $u^n$ with the energy of
  $u_a:=u^{n-1}+a\tau (u^{n-1}-m^{n-1})$, $a\in\R$,
  where $m^{n-1} = \int_\Om u^{n-1} dx/|\Om|$ is the average of $u^{n-1}$
for Neumann boundary conditions, and $0$ for homogeneous Dirichlet boundary conditions.
One has in particular, thanks to the Poincar\'e(-Wirtinger) inequality:
\[
  \|u^{n-1}-u_a\|_1^2 = a^2\tau^2\|u^{n-1}- m^{n-1}\|_1^2
  \le a^2\tau^2 c_\Om \int_\Om |D u^{n-1}|^2.
\]
Hence:
\begin{multline*}
  \frac{1}{2}\int_\Om |D u^n|^2 dx + \frac{1}{2\tau}\|u^n-u^{n-1}\|_1^2
\\  \le (1+2a\tau +\tau^2a^2)\frac{1}{2}\int_\Om |D u^{n-1}|^2 dx +
  \frac{\tau}{2} a^2 c_\Om \int_\Om |D u^{n-1}|^2.
\end{multline*}
Choosing $a=-1/c_\Om$, we find:
\[
  \frac{1}{2}\int_\Om |D u^n|^2 dx
  \le \frac{1}{2}\int_\Om |D u^{n-1}|^2\left(1-\tfrac{\tau}{c_\Om}+
  \tfrac{\tau^2}{c_\Om^2}\right).
\]
Hence,
\[
  \frac{1}{2}\int_\Om |D u^n|^2 dx
  \le \frac{1}{2}\int_\Om |D u^{0}|^2\left(1-\tfrac{\tau}{c_\Om}+
  \tfrac{\tau^2}{c_\Om^2}\right)^n.
\]
We conclude using that
\[
  \lim_{n\to\infty, n\tau\to t}\left(1-\tfrac{\tau}{c_\Om}+
  \tfrac{\tau^2}{c_\Om^2}\right)^n = e^{-\frac{t}{c_\Om}}.
\]
\end{proof}

\section{Gradient flow of anisotropic perimeters}\label{sec:per}

Given a norm $\p$ on $\R^d$, we consider the anisotropic perimeter
\[
E\mapsto P_\p(E):=\int_{\partial E}\p(\nu)\,d\H^{d-1}.
\]
Letting $\p^o$ be the dual norm of $\p$, we recall that the convex set
\[
W_\p := \{x\in\R^d:\ \p^o(x)\le 1\},
\]
usually called {\it Wulff Shape}, is the unique volume-constrained minimizer of $P_\p$, up to translations and dilations.
We say that $\p$ is smooth (resp. elliptic) if the function $\p^2/2$ is smooth (resp. strongly convex).

We now introduce the geometric $L^1$-minimizing movement scheme.
Given $\tau>0$ and $E\subset\R^d$, we consider the minimum problem
\begin{equation}\label{pbmin}
 \min_{F} P_\p(F) + \frac{1}{2\tau} |E\triangle F|^2,
\end{equation}
and we let $T_\tau E$ be a (\new{possibly non}-unique) minimizer of \eqref{pbmin}.

Given an initial set $E^0\subset\R^d$,  for all $n\in\mathbb N$ we let $E^n:= T_\tau^n E_0$.
For $(t,x)\in  (0,+\infty)\times \R^d$, we also let
\[
E_\tau(t):= E^{\lfloor t/\tau\rfloor} \qquad u_\tau(t,x):=\chi_{E_\tau(t)}(x).
\]
The function $t\mapsto E_\tau(t)$ is the discrete $L^1$-gradient flow of $P_\p$, with initial datum $E^0$.

We point out that the analogous concept for the $L^2$-gradient flow of $P_\p$,
where \eqref{pbmin} is replaced by the problem
\begin{equation}\label{pbminatw}
 \min_{F} P_\p(F) + \frac{1}{2\tau} \int_{E\triangle F} {\rm dist}(x,\partial E)\,dx,
\end{equation}
was originally introduced in \cite{ATW,LS} as a discrete approximation of the mean curvature flow. \new{It is shown in these references that the ``distance term'' in~\eqref{pbminatw} is indeed a (non symmetric) approximation of the squared $2$-distance between the boundaries of $E$ and $F$ (a ``flat'' version could be something like $\int_x \int_{t\in [e(x),f(x)]} |t-e(x)| dt\,dx = \frac{1}{2}\int_x |e(x)-f(x)|^2dx$), at least when these boundaries are smooth enough.}

From \eqref{pbmin} it follows that
\begin{equation}\label{estperimeter}
P_\p(E^n)\le P_\p(E^{n-1})\qquad \text{and}\qquad 
\frac {1}{2\tau} |E^n\triangle E^{n-1}|^2 \le P_\p(E^{n-1})-P_\p(E^n),
\end{equation}
for all $n\in\N$, so that 
\begin{eqnarray}\nonumber
|E^n\triangle E^m|^2
&\le&  \left(\sum_{k=m+1}^n |E^k\triangle E^{k-1}|\right)^2 
\\ \label{estimate}
&\le& 2\tau(n-m) \sum_{k=m+1}^n \left(P_\p(E^{k-1})-P_\p(E^k)\right)
\\ \nonumber
&\le& 2\tau(n-m) P_\p(E^0),
\end{eqnarray}
for all $0\le m<n$.

Reasoning as in the proof of Theorem~\ref{th:minMM}, from \eqref{estperimeter} and \eqref{estimate} we get the following result.

\begin{theorem}\label{existgeom}
Assume that $E^0\subset\R^d$ is a set of finite perimeter. Then there exist a sequence $\tau_k\to \infty$ and a function 
$u(x,t)\in L^\infty((0,+\infty),BV(\R^d))\cap C^{1/2}((0,+\infty),L^1(\R^d))$,
with $u(x,t) = \chi_{E(t)}(x)$ for some family of sets $E(t)$,  such that 
\[
\lim_{k\to +\infty} \sup_{t\in [0,T]}\|u(\cdot,t)-u_{\tau_k}(\cdot,t)\|_{L^1(\R^d)} = 0
\qquad \forall T>0.
\]
\end{theorem}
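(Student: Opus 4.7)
The strategy is to mimic the proof of Theorem~\ref{th:minMM}, with the squared $L^1$-distance between functions replaced by $|E\triangle F|^2$ and with $P_\p$ playing the role of $\Phi$. The three ingredients to assemble are time-equicontinuity of the discrete flow, uniform spatial compactness at each time, and an Ascoli-Arzel\`a extraction along a diagonal subsequence $\tau_k\to 0$.

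First, for time-equicontinuity, the Cauchy-Schwarz computation already done in~\eqref{estimate}, rewritten in continuous time, yields
\[
\|u_\tau(t)-u_\tau(s)\|_{L^1(\R^d)}\le \sqrt{2P_\p(E^0)}\,\sqrt{|t-s|+\tau}
\]
for all $s,t\ge 0$. Second, for spatial compactness at fixed $t$, the monotonicity~\eqref{estperimeter} yields $P_\p(E_\tau(t))\le P_\p(E^0)$, so $\{\chi_{E_\tau(t)}\}$ is uniformly bounded in $BV(\R^d)$, while~\eqref{estimate} with $m=0$ gives the $L^1$-bound $|E_\tau(t)\triangle E^0|\le \sqrt{2tP_\p(E^0)}$. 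The $BV$ compactness theorem then makes $\{\chi_{E_\tau(t)}-\chi_{E^0}\}_\tau$ relatively compact in $L^1_{\mathrm{loc}}(\R^d)$ for every fixed $t$.

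I would then extract $\tau_k\to 0$ by a diagonal argument along a countable dense set of times $\{t_j\}\subset [0,+\infty)$, and propagate the convergence to every $t$ by the equicontinuity estimate, uniformly on compact time intervals. Since the approximants $u_{\tau_k}(\cdot,t)$ are characteristic functions, so is the a.e.~limit (after extracting a further a.e.-subsequence), yielding $u(\cdot,t)=\chi_{E(t)}$ for some measurable $E(t)$ with $P_\p(E(t))\le P_\p(E^0)$ by lower semicontinuity of the perimeter; the $1/2$-H\"older time bound descends directly to $u$.

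The principal obstacle is upgrading convergence from $L^1_{\mathrm{loc}}(\R^d)$ to $L^1(\R^d)$, which amounts to tightness of $\{\chi_{E_\tau(t)}-\chi_{E^0}\}$ uniformly in $\tau$ and $t\in[0,T]$; without this, a piece of $E_\tau(t)$ could in principle escape to spatial infinity. I expect to settle this by a cut-and-delete argument on the minimizers of~\eqref{pbmin}: a component of $E_\tau(t)\triangle E^{0}$ far from the rest pays a fixed isoperimetric cost in the perimeter term while contributing only through the already-small quadratic symmetric-difference term, so deleting or reattaching it strictly decreases the energy once its diameter exceeds a threshold depending on $\tau$ and $P_\p(E^0)$. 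Iterating this confinement over the discrete steps should keep $E_\tau(n\tau)\triangle E^0$ inside a bounded region whose size depends only on $t$ and $P_\p(E^0)$, which is exactly the tightness needed to conclude.
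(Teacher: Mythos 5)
Your main line — the Cauchy--Schwarz/H\"older-in-time estimate \eqref{estimate}, the uniform perimeter bound \eqref{estperimeter}, pointwise-in-time compactness from $BV$ bounds, and an Ascoli--Arzel\`a/diagonal extraction along a dense set of times — is exactly the paper's proof, which consists of the single sentence ``reasoning as in the proof of Theorem~\ref{th:minMM}, from \eqref{estperimeter} and \eqref{estimate} we get the result''. On that part you and the authors coincide, and your observation that an a.e.\ limit of characteristic functions is again a characteristic function correctly supplies the identification $u(\cdot,t)=\chi_{E(t)}$.

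Where you go beyond the paper is in flagging the tightness issue: in Theorem~\ref{th:minMM} the domain $\Om$ is bounded, so $BV$ bounds give compactness in $L^1(\Om)$, whereas here the ambient space is $\R^d$ and one only gets $L^1_{\loc}$ compactness for free. This concern is legitimate (the paper is silent on it), but your proposed resolution does not close as stated. The cut-and-delete/density argument (essentially Lemma~\ref{density}) yields the one-step confinement $T_\tau E\subset\{x:\dist(x,E)\le C\,|T_\tau E\setminus E|^{1/d}\}$, i.e.\ a per-step expansion radius $r_n\le C\,a_n^{1/d}$ with $a_n:=|E^n\triangle E^{n-1}|$. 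The available control is $\sum_n a_n^2\le 2\tau P_\p(E^0)$ over $N\approx t/\tau$ steps, and H\"older gives $\sum_{n\le N} a_n^{1/d}\le (2\tau P_\p(E^0))^{1/(2d)}(t/\tau)^{1-1/(2d)}\sim \tau^{1/d-1}$, which diverges as $\tau\to 0$ for $d\ge 2$. So ``iterating the confinement'' produces a containing region whose size blows up with $\tau$, not one depending only on $t$ and $P_\p(E^0)$; the claim at the end of your last paragraph is therefore unjustified. To conclude you would need a genuinely different tightness mechanism — for instance a uniform-in-$\tau$ containment/comparison principle for the scheme \eqref{pbmin}, or an argument excluding that a fixed fraction of the uniformly bounded mass $|E_\tau(t)\triangle E^0|\le\sqrt{2tP_\p(E^0)}$ concentrates outside large balls — or else settle for convergence in $L^1_{\loc}(\R^d)$. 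Be aware that this gap is inherited from the paper's own one-line proof, which does not address the passage from $L^1_{\loc}$ to $L^1(\R^d)$ either.
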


Following~\cite[Lemma~1.3, Remark~1.4]{LS},
we show a density estimate for minimizers of~\eqref{pbmin}. 

\begin{lemma}\label{density}
  There exists $c>0$ depending only on $\p$ and the dimension $d$ such that the following holds:
  let $E\subset\R^d$ and $F$ a minimizer of~\eqref{pbmin}, then
  \begin{enumerate}
  \item for a.e.~$x\in F\setminus E$
    and all $r>0$ such that $|B(x,r)\cap E|=0$, \new{we have} $|B(x,r)\cap F|\ge cr^d$;
  \item for a.e.~$x\not \in E$, and $r>0$ such that $|B(x,r)\cap E|=0$, if
    $|B(x,r)\cap F|\le cr^d/2$, then $B(x,r/2)\cap F =\emptyset$.  \label{density2}
  \end{enumerate}
\end{lemma}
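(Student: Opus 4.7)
My plan is to exploit minimality of $F$ against the competitors $F_\rho := F \setminus B(x,\rho)$ for $\rho \in (0,r]$, derive a differential inequality for the mass function $m(\rho) := |F \cap B(x,\rho)|$, and then integrate. This is the Luckhaus--Sturzenhecker-style argument from~\cite{LS}, adapted from the $L^2$ to the $L^1$ penalty, where the penalty term turns out to be even easier to discard.

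\textbf{Setup and comparison.} Since $|B(x,r) \cap E| = 0$, for every $\rho \le r$ we have $F \cap B(x,\rho) \subseteq F \setminus E$, hence $|E \triangle F_\rho| = |E \triangle F| - m(\rho)$ and in particular $m(\rho) \le |E \triangle F|$. Using the decomposition of the essential boundary $\partial^* F_\rho = (\partial^* F \setminus \overline{B}(x,\rho)) \cup (F^{(1)} \cap \partial B(x,\rho))$ (valid up to $\H^{d-1}$-null sets for a.e.~$\rho$) and the upper bound $\p(\nu) \le \p_+ := \max_{|\nu|=1}\p(\nu)$, I obtain for a.e.~$\rho$
\[
P_\p(F_\rho) \le P_\p(F) - P_\p(F; B(x,\rho)) + \p_+ m'(\rho),
\]
where $m'(\rho) = \H^{d-1}(F^{(1)} \cap \partial B(x,\rho))$ a.e.\ by slicing. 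Plugging this into the minimality inequality for $F$ and discarding the nonnegative term $|E\triangle F|^2 - (|E\triangle F|-m(\rho))^2$ on the left, I get the key bound
\[
P_\p(F; B(x,\rho)) \le \p_+\, m'(\rho).
\]

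\textbf{ODE and conclusion.} The essential boundary of $F \cap B(x,\rho)$ consists of $\partial^* F \cap B(x,\rho)$ together with $F^{(1)}\cap\partial B(x,\rho)$, so $P_\p(F\cap B(x,\rho)) \le P_\p(F; B(x,\rho)) + \p_+ m'(\rho) \le 2\p_+ m'(\rho)$. Combined with the anisotropic isoperimetric inequality $P_\p(F\cap B(x,\rho)) \ge C_\p\, m(\rho)^{(d-1)/d}$, this yields
\[
m'(\rho) \ge \gamma\, m(\rho)^{(d-1)/d} \quad \text{for a.e.~} \rho \in (0,r], \qquad \gamma := \frac{C_\p}{2\p_+}.
\]
Since $m$ is absolutely continuous and nondecreasing, $g := m^{1/d}$ satisfies $g'(\rho) \ge \gamma/d$ a.e.\ on $\{m>0\}$, so $g(b) - g(a) \ge (\gamma/d)(b-a)$ on every subinterval of $(0,r]$ where $m > 0$. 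I would then set $c := (\gamma/(2d))^d$. For part (1), at a Lebesgue density point of $F$ one has $m(\rho) > 0$ for all $\rho > 0$, and letting $\eps \to 0$ in $g(r) \ge g(\eps) + (\gamma/d)(r-\eps)$ gives $m(r) \ge (\gamma r/d)^d = 2^d c\, r^d \ge c r^d$. For part (2), assume $m(r) \le c r^d/2$; if one had $m(r/2) > 0$, then $m > 0$ on $[r/2,r]$ and integrating would yield $g(r) \ge \gamma r/(2d)$, hence $m(r) \ge c r^d$, a contradiction, so $|B(x,r/2)\cap F| = 0$.

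\textbf{Main technical point.} The delicate step is the a.e.-in-$\rho$ justification of the essential boundary decompositions for $F_\rho$ and $F\cap B(x,\rho)$, together with the slicing identity $m'(\rho) = \H^{d-1}(F^{(1)}\cap\partial B(x,\rho))$; once these standard $BV$-slicing facts are in place, the rest is a routine ODE comparison. Note also that the argument is insensitive to $\tau$, reflecting the fact that the penalty term only enters through the sign of $|E \triangle F|^2 - (|E\triangle F| - m(\rho))^2 \ge 0$, which is precisely why the density estimate holds uniformly in the time step.
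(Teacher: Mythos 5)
Your proof is correct and follows essentially the same route as the paper's: comparison with $F\setminus B(x,\rho)$, observing that the $L^1$-penalty term can only decrease (since $|B(x,r)\cap E|=0$) and hence drops out, combining the resulting perimeter bound with the isoperimetric inequality to get $m'(\rho)\gtrsim m(\rho)^{1-1/d}$, and concluding by a Gronwall-type integration. You merely spell out in more detail the $BV$-slicing identities and the two-part conclusion that the paper leaves implicit.
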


\begin{proof}
  Following~\cite{LS} we compare the energy of $F$ and $F\setminus B(x,r)$ in~\eqref{pbmin}
  and, introducing $b>a>0$ s.t.~$a|x|\le \p(x)\le b|x|$, we observe that for a.e.~$r>0$,
  if $|B(x,r)\cap E|=0$:
  \[
    P_\p(F) + \frac{1}{2\tau} |E\triangle F|^2 \le P_\p(F\setminus B(x,r))
   + \frac{1}{2\tau} |(E\triangle F) \setminus B(x,r)|^2 
 \]
 implies $a\H^{d-1}(\partial F\cap B(x,r)) \le b \H^{d-1}(\partial B(x,r)\cap F)$.
 Introducing $f(r)=|F\cap B(x,r)|$, this is rewritten \new{as} $(a+b)\H^{d-1}(\partial (F\cap B(x,r)))
 \le f'(r)$, and using the isoperimetric inequality we deduce that for some constant
 $\gamma>0$ depending only on $d,a,b$, \new{there holds} $\gamma f(r)^{1-1/d}\le f'(r)$. The thesis
 follows from a version of Gronwall's Lemma.  
\end{proof}

\begin{remark} A symmetric statement holds for points $x\in E$, with $B(x,r)\cap F$ replaced
with $B(x,r)\setminus F$.
\end{remark}

\begin{remark} A similar proof (see~\cite{LS} again)
  shows that there exists $r(\tau)>0$ such that
  for $r<r(\tau)$, for a.e.~$x\in F$, $|B(x,r)\cap F|\ge cr^d$, and
  for a.e.~$x\not\in F$, $|B(x,r)\cap F^c| \ge cr^d$. In particular,
  the points of Lebesgue density $1$ (resp. $0$) of $F$ form an open set,
  the reduced boundary of $F$ is $\H^{d-1}$-essentially closed, and there is no
  abuse of notation in denoting it $\partial F$.
\end{remark}

\subsection{Outward minimizing case} \label{sec:out}

\begin{definition}
Let $\Omega\subset\R^d$ be open. We say that a set $E\subset\Omega$ is  outward minimizing if 
\[
P_\p(E)\le P_\p(F) \qquad\forall F\supset E, F\subset \Omega.
\]
\end{definition}

Notice that if $\p$ is smooth and $E$ is an outer minimizer with boundary of class $C^2$ then $E$ is $\p$-mean convex, that is, $H_\p(x)\ge 0$ for any $x\in \partial E$,
where $H_\p(x)$ is the $\p$-mean curvature of $\partial E$ at $x$ (see for instance \cite{ChambolleNovaga} for a precise definition).
Conversely, if $H_\p(x)\ge\delta>0$
for any $x\in\partial E$ one can build $\Omega\supset\supset E$ such that $E$ is
outward minimizing in $\Omega$. Notice also that a convex set is always outward minimizing.

We recall the following result proved in \cite[Lemma 2.5]{Laux} (see also \cite[Section 2.1]{ChambolleNovaga}).

\begin{lemma}\label{outlem}
E is outward minimizing if and only if
\[
P_\p(E\cap F)\le P_\p(F)\qquad\forall F\subset \Omega.
\]
\end{lemma}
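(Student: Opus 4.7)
The plan is to use the submodularity (or subadditivity under intersection/union) of the anisotropic perimeter, namely
\[
P_\p(A\cap B) + P_\p(A\cup B) \le P_\p(A) + P_\p(B),
\]
which holds for all sets of finite perimeter and is essentially the set-theoretic counterpart of inequality~\eqref{eq:submod} applied to $u=\chi_A$, $v=\chi_B$.

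For the reverse implication (``if''), I would simply take any competitor $F\subset\Omega$ with $F\supset E$; then $E\cap F=E$, so the assumed inequality $P_\p(E\cap F)\le P_\p(F)$ immediately reads $P_\p(E)\le P_\p(F)$, which is outward minimality.

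For the forward implication (``only if''), assume $E$ is outward minimizing and let $F\subset\Omega$ be arbitrary. The set $E\cup F$ contains $E$ and is contained in $\Omega$, so by outward minimality
\[
P_\p(E)\le P_\p(E\cup F).
\]
Combining with submodularity gives
\[
P_\p(E\cap F)\le P_\p(E)+P_\p(F)-P_\p(E\cup F)\le P_\p(F),
\]
which is the desired inequality.

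There is no real obstacle here beyond invoking submodularity of $P_\p$; the only technical point worth mentioning is that one should check that the sets involved ($E$, $F$, $E\cup F$, $E\cap F$) have finite perimeter, which is automatic if $P_\p(F)<+\infty$ (otherwise the conclusion is trivial) and if we assume $P_\p(E)<+\infty$ in the definition of outward minimizer.
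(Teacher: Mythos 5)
Your proof is correct and is essentially the argument behind the cited result: the paper itself does not prove Lemma~\ref{outlem} but refers to \cite[Lemma 2.5]{Laux}, whose proof is exactly this combination of the submodularity $P_\p(E\cap F)+P_\p(E\cup F)\le P_\p(E)+P_\p(F)$ with the admissibility of $E\cup F\supset E$, $E\cup F\subset\Omega$ as a competitor (plus the trivial ``if'' direction). Your closing remark on finite perimeter is the right technical caveat and is handled exactly as you say.
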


From Lemmas~\ref{density} and~\ref{outlem} we obtain the following result.

\begin{proposition}\label{lemoutward}
  Assume that $E\subset\subset\Omega$ is outward minimizing in $\Omega$. Then, for $\tau$
  small enough (depending only on $\p$,  $\dist(E,\partial\Omega)$, and the dimension)
  we have that $T_\tau E\subseteq E$ and $T_\tau E$ is outward minimizing in $\Omega$.
  
 In particular, the limit flow obtained in Theorem~\ref{existgeom} is non-increasing and outward minimizing in $\Omega$.

\end{proposition}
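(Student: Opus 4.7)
The plan is to argue in four steps: first show $T_\tau E\subseteq\Om$ using the density estimate, then upgrade this to $T_\tau E\subseteq E$ using the outward minimizing property, then verify that $T_\tau E$ inherits outward minimality, and finally pass to the limit in the discrete flow.

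Set $F=T_\tau E$. Comparing $F$ to the trivial competitor $E$ gives the standard estimate $|E\triangle F|^2\le 2\tau P_\p(E)$. If $x\in F\setminus E$ is a point of Lebesgue density one and $r=\dist(x,E)>0$, then $|B(x,r)\cap E|=0$, so Lemma~\ref{density}\,(1) yields $cr^d\le|B(x,r)\cap F|\le|F\setminus E|\le\sqrt{2\tau P_\p(E)}$, hence $r\lesssim\tau^{1/(2d)}$. Choosing $\tau$ small enough so that this quantity is strictly less than $\dist(E,\partial\Om)$, every density point of $F\setminus E$ lies inside $\Om$, and the essential closedness of $\partial F$ (noted after Lemma~\ref{density}) promotes this to $F\subseteq\Om$.

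Once $F\subseteq\Om$, Lemma~\ref{outlem} applied to $F$ gives $P_\p(E\cap F)\le P_\p(F)$. Comparing $F$ with the competitor $E\cap F$ in \eqref{pbmin} and using $E\triangle(E\cap F)=E\setminus F$ together with $|E\triangle F|^2=(|E\setminus F|+|F\setminus E|)^2$ yields
\begin{equation*}
2|E\setminus F|\,|F\setminus E|+|F\setminus E|^2\le 2\tau\bigl(P_\p(E\cap F)-P_\p(F)\bigr)\le 0,
\end{equation*}
forcing $|F\setminus E|=0$. For outward minimality of $F$, pick any $G\subseteq\Om$ with $G\supseteq F$. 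Since $F\subseteq E$ and $F\subseteq G$, we have $F\subseteq E\cap G\subseteq\Om$, and the minimality of $F$ tested against $E\cap G$ gives
\begin{equation*}
P_\p(F)+\tfrac{1}{2\tau}|E\setminus F|^2\le P_\p(E\cap G)+\tfrac{1}{2\tau}|E\setminus G|^2.
\end{equation*}
The outward minimality of $E$ bounds $P_\p(E\cap G)\le P_\p(G)$, and the inclusion $E\setminus G\subseteq E\setminus F$ gives $|E\setminus G|\le|E\setminus F|$, so $P_\p(F)\le P_\p(G)$.

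Iterating the two preceding steps is legitimate because $P_\p(E^n)\le P_\p(E^0)$, so a single smallness threshold on $\tau$ works at every step. Thus each $E^n$ is outward minimizing and $E^n\subseteq E^{n-1}$. Monotonicity is preserved by $L^1$ convergence. For outward minimality of the limit, given $G\subseteq\Om$ with $G\supseteq E(t)$, Lemma~\ref{outlem} applied to $E_\tau(t)$ gives $P_\p(G\cap E_\tau(t))\le P_\p(G)$; since $G\cap E_\tau(t)\to G\cap E(t)=E(t)$ in $L^1$, lower semicontinuity of $P_\p$ concludes the proof.

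The main technical point is the first step, where we force $F\subseteq\Om$ purely from a quadratic $L^1$-bound and a density estimate; the subtlety is that the bound $|F\setminus E|\le\sqrt{2\tau P_\p(E)}$ involves $P_\p(E)$, but along the discrete flow this is uniformly controlled by $P_\p(E^0)$, so the smallness of $\tau$ can indeed be fixed in terms of $\p$, $\dist(E,\partial\Om)$, $d$, and the starting perimeter.
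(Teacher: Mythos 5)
Your proof is correct and follows essentially the same route as the paper: confinement of $T_\tau E$ inside $\Omega$ via the density estimate of Lemma~\ref{density}, the inclusion $T_\tau E\subseteq E$ by comparing with $E\cap T_\tau E$ through Lemma~\ref{outlem}, and outward minimality of $T_\tau E$ by testing its minimality against $E\cap G$ for $G\supseteq T_\tau E$. The only cosmetic differences are that you invoke part (1) of Lemma~\ref{density} where the paper uses part (2), and that you spell out the iteration and the passage to the limit (via lower semicontinuity of $P_\p$), which the paper leaves implicit.
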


\begin{proof}
The first assertion follows from the minimality of $T_\tau E$ and from the fact that
\[
P_\p(T_\tau E\cap E) + \frac{1}{2\tau} |(T_\tau E\cap E)\triangle E|^2
\le P_\p(T_\tau E) +  \frac{1}{2\tau} |T_\tau E\triangle E|^2,
\]
with equality iff $T_\tau E\subseteq E$. We use here Lemma~\ref{outlem} which holds if we can
prove first that $T_\tau E\subset \Omega$. Let $r:=\dist(E,\partial\Omega)/2$.
Then for $\tau$ small enough, we have (comparing the energy of $T_\tau E$
and $E$ in~\eqref{pbmin}) that $|T_\tau E\setminus E|\le \sqrt{2\tau P_\p(E)}\le cr^d/2$
where $c$ is the constant in Lemma~\ref{density}. Using point~\eqref{density2} in
Lemma~\ref{density}, it follows that $\{x: r/2<\dist(x,E)<3r/2\}\cap T_\tau E=\emptyset$ and
we deduce that $T_\tau E\subset \{x:\dist(x,E)\le r/2\}\subset\subset\Omega$.

In order to prove the second assertion, 
we fix $F$ such that $T_\tau E\subset F\subset\Omega$, and we notice that
\[
P_\p(T_\tau E)\le P_\p(F\cap E)  + \frac{1}{2\tau} |(F\cap E)\triangle E|^2 - \frac{1}{2\tau} |T_\tau E\triangle E|^2 \le P_\p(F\cap E) \le P_\p(F),
\]
where the last inequality follows from the outward minimality of $E$.

\end{proof}

\begin{remark} 
From Proposition~\ref{lemoutward} and \eqref{pbmin} it follows that the set $T_\tau E$ solves the minimum problem
\begin{equation}\label{eqoutward}
\min_{F\subset E} P_\p(F) - \frac 1\tau |E|\, |F| + \frac{1}{2\tau} |F|^2,
\end{equation}
hence $T_\tau E$ is also a solution of the volume-constrained isoperimetric problem
(see also~\eqref{eq:volfgeom} later on)
\begin{equation}\label{eqconstr}
\min_{F\subset E, |F|=|T_\tau E|} P_\p(F).
\end{equation}

If $\p$ is smooth and elliptic (that is, $\p^2/2$ is smooth and strongly convex),
from \eqref{eqoutward} it follows that $T_\tau E\cap \textup{int}(E)$  is smooth and
satisfies the Euler-Lagrange equation
\begin{equation}\label{eqEL}
H_\p(x) = \frac{|E\setminus T_\tau E|}{\tau} \qquad\text{for }x\in \partial T_\tau E\cap \textup{int}(E).
\end{equation}
If in addition $\partial E$ is of class $C^{1,1}$, by classical regularity results for the obstacle problem \cite{Caffarelli,CCN}
$\partial T_\tau E$ is also of class $C^{1,1}$ outside a closed singular set of Hausdorff dimension $d-2$,
and satisfies the Euler-Lagrange inequality
\begin{equation}\label{eqELdue}
0\le H_\p(x) \le \frac{|E\setminus T_\tau E|}{\tau} \qquad\text{for a.e. }x\in \partial T_\tau E.
\end{equation}

Passing to the limit in \eqref{eqEL} and  \eqref{eqELdue} as $\tau\to 0$, and reasoning as in Theorem~\ref{th:contevolsc},
we may expect that the limit flow $E(t)$ satisfies the equations
\begin{equation}\label{eq:geometricequation}
  \begin{cases} 0\le H_\p\le -\dfrac{d}{dt} |E(t)|& \text{a.e.~on }\partial E(t) \\
  \\
   H_\p = -\dfrac{d}{dt} |E(t)| & \text{a.e.~on }\partial E(t)\cap \textup{int}(E^0),
  \end{cases} 
   \end{equation}
for a.e. $t>0$.
\end{remark}

\begin{remark} We cannot expect that there always exists a constant
$\lambda>0$ such that $T_\tau E$ is a solution of 
\begin{equation}\label{minlambda}
\min_{F\subset E} P_\p(F)-\lambda|F|,
\end{equation}
as it happens in the case of functions (see Section~\ref{sec:monotone}).
In the sequel we shall see that, in the case $E$ is
convex, this is true only if $E$ does not coincide with its Cheeger set,
and $\tau$ is small enough so that $\lambda$ is greater than 
the Cheeger constant of $E$.
\end{remark}

\subsection{Convex case} 
We now consider the special case of a convex initial set.

\begin{proposition}\label{proconvex}
Let $d=2$ and assume that $\p$ is smooth and elliptic.
Assume also that $E^0$ is a bounded convex set. 
Then the limit flow $E(t)$ obtained in Theorem~\ref{existgeom} is given by a decreasing family of convex subsets of $E^0$.
\end{proposition}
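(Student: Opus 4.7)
The plan is to show that each discrete iterate $E^n=T_\tau^n E^0$ is itself convex, and then to use that convexity is preserved under $L^1$-convergence of characteristic functions to deduce convexity of the limit $E(t)$. The monotonicity and the inclusion in $E^0$ are essentially already in hand: a bounded convex set $E^0$ is outward minimizing in any $\Omega\supset\supset E^0$, so Proposition~\ref{lemoutward} gives $E^n\subseteq E^{n-1}$ for every $n\ge 1$, and the inclusions pass to the $L^1$-limit.

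The key analytic ingredient is the following two-dimensional fact: for every bounded set of finite perimeter $F\subset\R^2$,
\[
  P_\p(\operatorname{co}(F))\le P_\p(F),
\]
with strict inequality unless $F$ agrees with its convex hull up to a negligible set. I would prove this by decomposing $\operatorname{co}(F)\setminus F$ into its (essentially) connected components $C_j$: the boundary of each $C_j$ splits into an arc $\Gamma_j\subseteq\partial F$ and an arc $\Gamma_j'\subseteq\partial\operatorname{co}(F)$. Applying the divergence theorem in $C_j$ to a constant vector field $z\in\R^2$ with $\p^o(z)\le 1$ yields
\[
  \int_{\Gamma_j'} z\cdot\nu_{\operatorname{co}(F)}\,dH^1 = \int_{\Gamma_j} z\cdot\nu_F\,dH^1 \le \int_{\Gamma_j}\p(\nu_F)\,dH^1,
\]
and taking the supremum over admissible $z$ on the left gives $\int_{\Gamma_j'}\p(\nu_{\operatorname{co}(F)})\,dH^1\le \int_{\Gamma_j}\p(\nu_F)\,dH^1$. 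Summing over $j$ yields the claim. This is the step where the assumption $d=2$ enters crucially, and it uses neither smoothness nor ellipticity of $\p$.

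With the lemma in hand, the discrete convexity step is immediate. If $E$ is convex and $F=T_\tau E$ is any minimizer of~\eqref{pbmin}, then $\operatorname{co}(F)\subseteq E$, whence
\[
  |E\triangle\operatorname{co}(F)| = |E|-|\operatorname{co}(F)| \le |E|-|F| = |E\triangle F|,
\]
with equality if and only if $F$ is convex. Combined with the perimeter inequality, this shows that $\operatorname{co}(F)$ is a competitor with strictly smaller energy than $F$ unless $F=\operatorname{co}(F)$. Since $F$ is a minimizer, we conclude that $F$ is convex, and by induction every $E^n$ is convex.

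To finish I would pass to the limit: each $E^n_{\tau_k}$ is a convex subset of $E^0$, and convex subsets of $\R^d$ form an $L^1$-closed class (up to modification on null sets), so the limit sets $E(t)$ given by Theorem~\ref{existgeom} are convex. The discrete inclusions $E^n\subseteq E^m$ for $n\ge m$ then transfer to $E(t)\subseteq E(s)$ for a.e.~$t\ge s$, and to all $t,s$ after selecting a suitable representative. The main obstacle I anticipate is making the divergence-theorem argument for $P_\p(\operatorname{co}(F))\le P_\p(F)$ fully rigorous for arbitrary sets of finite perimeter rather than smooth or polygonal ones, but this should follow from a standard approximation.
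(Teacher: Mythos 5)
Your reduction of the limit statement to convexity of each discrete iterate, and the monotonicity via Proposition~\ref{lemoutward}, are fine; the gap is in the key lemma. The inequality $P_\p(\operatorname{co}(F))\le P_\p(F)$ is false for general bounded sets of finite perimeter in the plane: take $F$ a union of two unit disks at mutual distance $L$; then $P(F)=4\pi$ while the convex hull has perimeter $2\pi+2L$, which is larger as soon as $L>\pi$ (the same happens for every norm $\p$). The flaw in your derivation is the step ``taking the supremum over admissible $z$'': the divergence theorem is applied with a \emph{single constant} vector field, and the supremum over constant $z$ with $\p^o(z)\le 1$ of $\int_{\Gamma'_j}z\cdot\nu\,d\H^1$ equals $\p\bigl(\int_{\Gamma'_j}\nu\,d\H^1\bigr)$, not $\int_{\Gamma'_j}\p(\nu)\,d\H^1$; you cannot pull the supremum inside the integral unless $\nu$ is constant on $\Gamma'_j$. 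That is indeed the case when $F$ is (essentially) connected, because then every extreme point of $\operatorname{co}(F)$ lies in $\overline F$ and each component of $\partial\operatorname{co}(F)\setminus\overline F$ is a straight chord; but you neither state nor prove connectedness, and without it the lemma, as stated, is wrong.

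This matters because the set to which you apply the lemma is $F=T_\tau E$, a minimizer of~\eqref{pbmin}, and nothing established so far guarantees that $T_\tau E$ is connected; a priori it could consist of several pieces (compare Example~2, where minimizers are disconnected). If you could prove connectedness of $T_\tau E$ for convex $E$ (and then the chord structure of $\partial\operatorname{co}(F)\setminus\partial F$), your argument would close and would in fact be more elementary than the paper's, since it uses neither smoothness nor ellipticity of $\p$; but that connectedness is essentially where the work lies, and deducing it from convexity would be circular. The paper argues differently: following \cite{RossiStefanelliThomas} it obtains directly that $T_\tau E$ is a convex subset of $E$ with $C^{1,1}$ boundary satisfying \eqref{eqEL}--\eqref{eqELdue}, then uses the two-dimensional results of \cite{BellettiniNovagaPaolini,LSa} to show that each component of $\partial T_\tau E\cap \textup{int}(E)$ is an arc of $r\partial W_\p$ with $r=\tau/|E\setminus T_\tau E|$, which gives the explicit characterization $T_\tau E=E^-_r$ (or a Wulff shape plus a segment) and hence convexity, monotonicity and the limit statement. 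So either supply a non-circular proof that the minimizer is connected, or follow the obstacle-problem/regularity route of the paper.
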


\begin{proof}
As in \cite[Section 4.1]{RossiStefanelliThomas} one can easily show that
$T_\tau E$ is a convex subset of $E$ with boundary of class $C^{1,1}$, satisfying \eqref{eqEL} and \eqref{eqELdue}.
As in \cite[Section 9]{BellettiniNovagaPaolini} (see also \cite[Theorem 2.3]{LSa}), it follows that
each connected component of $\partial T_\tau E\cap \textup{int}(E)$ is a graph and it is contained in $r\partial W_\p$, with $r=\tau/|E\setminus T_\tau E|$.
As a consequence, if $r$ is greater than the inradius of $E$, then 
\begin{equation*}
T_\tau E = E^-_{r} := \bigcup_{x+ rW_\p:\, (x+rW_\p)\subset E}  (x+rW_\p),
\end{equation*}
otherwise $T_\tau E = rW_\p + s$ for some segment $s\subset E$,
and $s$ is a point if $r$ is smaller than the inradius of $E$.

By iterating the previous argument, and taking the limit as $\tau\to 0$, get the thesis.

\end{proof}

\begin{remark} By the argument above we get that 
\[
E(t) = E^-_{r(t)} \qquad t\in [0,T],
\]
where $r(t)$ is continuous, increasing, and $T\ge 0$ is the first time such that $r(T)$ equals the inradius of $E$. 
In particular, the limit flow is unique on $[0,T]$.
\end{remark}

\begin{remark}
By approximating a general norm $\p$ with a sequence of smooth and elliptic norms, 
following the proof of Proposition~\ref{proconvex},
we obtain that there exists $r>0$ such that  
$T_\tau E = E^-_{r}$ or $T_\tau E = rW_\p + s$ for some segment $s\subset E$.

As a consequence, also in the case of a general norm, there exists at least one limit flow $E(t)$ 
given by a decreasing family of convex subsets of $E^0$. We point out that, in the general case, we do not prove uniqueness of the limit flow.
\end{remark}

In \cite{CCMN} it has been proved that, in any dimension $d\ge 2$, a volume-constrained minimizer of $P_\p$ inside a convex set $E$
is unique and convex if its volume is greater or equal than the volume of the Cheeger set of $E$\new{.} \new{We recall that the
  Cheeger set of $E$ is the minimizer $F^{\star}$ of the 
variational problem
\[
\min_{F\subset E}\frac{P_\p(F)}{|F|} =: \lambda^{\star} 
\]
($\lambda^{\star}$ is called the Cheeger constant of $E$). This Cheeger set is unique when $E$ is convex \cite{KLR,CCN,AC}.}
The Cheeger set is also characterized as the largest minimizer ($\emptyset$ being the smallest one)
of the problem $\min_{F\subset E} P_\p(F)-\lambda^{\star}|F|$, which has value $0$.

For $\lambda>\lambda^{\star}$, there is a unique minimizer $F^\lambda$ to~\eqref{minlambda},
which is convex, and coincides with the above volume-constrained minimizer
(and is continuous with respect to $\lambda$, see~\cite{CCMN}). 
Moreover, if $\p$ is smooth and elliptic, $\lambda$ coincides with the mean curvature $H_\p$ of $\partial F^\lambda\cap \textup{int}E$ (otherwise
it can be thought of as a variational mean curvature).

It follows that,
as long as $|E^n|\ge |F^{\star}|$, where $F^{\star}$ is the Cheeger set of $E^0$,
we can define a non-increasing sequence $\lambda_n\ge \lambda^{\star}$
such that $E^n = F^{\lambda_n}$ and which satisfies, for $n\ge 1$,
\[
  \frac{|E^{n-1}|-|E^n|}{\tau} = \lambda_n,
\]
or equivalently for all $n\ge 1$,
\[
  |E^0|-|E^n| = \tau\sum_{k=0}^n \lambda_k.
\]
In the limit $\tau\to 0$, similarly to Section~\ref{sec:monotone},
up to a subsequence the non-increasing function $\lambda_{\floor{t/\tau}+1}$ converges
pointwise to a non-increasing function $\lambda(t)$, while $E_\tau(t)$ converges to $F^{\lambda(t)}$.
In particular, in the limit we find that:
\begin{equation}\label{eqlambint}
  |E^0|-|E(t)| = \int_0^t \lambda(s)ds
\end{equation}
for all $0\le t\le T^{\star}$, where $\lambda(T^{\star})=\lambda^{\star}$. 


If $\p$ is smooth and elliptic, since the sets $F^\lambda$ are all different, the function $t\mapsto \lambda(t)$ is continuous on $[0,T^{\star}]$, 
so that the function $t\mapsto |E(t)|$ is of class $C^1$ by~\eqref{eqlambint}. We deduce that~\eqref{eq:geometricequation} holds for all $t\in (0,T^{\star})$.

We then obtain a partial extension of Proposition~\ref{proconvex} to arbitrary dimensions and for a general norm $\p$.

\begin{proposition}\label{proconvexdue}
Assume that $E^0$ is a bounded convex set not coinciding with its Cheeger set. Then there exists $T^{\star}>0$
such that limit flow $E(t)$ is given by a decreasing family of convex subsets of $E^0$ for $t\in [0,T^{\star}]$.
Moreover, each set $E(t)$ is a volume-constrained minimizer of $P_\p$ inside $E^0$, and $E(T^{\star})=F^{\star}$ is the Cheeger set of $E^0$.
In particular, for $t\in (0,T^{\star}]$ $E(t)$ is the unique minimizer $F^{\lambda(t)}$ of~\eqref{minlambda} for
some $\lambda(t)>\lambda^{\star}$ which solves \eqref{eqlambint}. 
\end{proposition}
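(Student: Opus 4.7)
The proof assembles material laid out in the paragraphs immediately preceding the statement, so the plan is really to turn those remarks into a rigorous induction followed by a limit passage.

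First, I would work at the discrete level. Starting from $E^0$ convex (hence outward minimizing in any sufficiently large ambient $\Omega$), Proposition~\ref{lemoutward} gives $E^1=T_\tau E^0\subset E^0$, and one checks that $E^1$ remains outward minimizing. By the identification $|F\triangle E^0|=|E^0|-|F|$ for $F\subset E^0$, the minimality of $E^1$ for~\eqref{pbmin} implies that $E^1$ solves $\min_{F\subset E^0}P_\varphi(F)-\lambda_1|F|$ with $\lambda_1=(|E^0|-|E^1|)/\tau$. Since $|E^1|\ge|F^{\star}|$ for $\tau$ small (by absolute continuity of $|E(t)|$ and $|E^0|>|F^{\star}|$), the CCMN uniqueness result says this $E^1$ coincides with the unique convex minimizer $F^{\lambda_1}$ of~\eqref{minlambda} inside $E^0$; in particular $\lambda_1\ge\lambda^{\star}$.

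Next, I iterate: assuming $E^{n-1}=F^{\lambda_{n-1}}$ is convex with $\lambda_{n-1}\ge\lambda^{\star}$, the same argument inside $E^{n-1}$ yields $\lambda_n=(|E^{n-1}|-|E^n|)/\tau$ and $E^n$ minimizes $P_\varphi-\lambda_n|\cdot|$ in $E^{n-1}$. To upgrade this to minimality inside $E^0$, observe that $|E^n|\le|E^{n-1}|$, together with the monotone parametrization $\lambda\mapsto F^\lambda$ (increasing as $\lambda$ increases), forces $\lambda_n\le\lambda_{n-1}$ and hence $F^{\lambda_n}\subset F^{\lambda_{n-1}}=E^{n-1}$. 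Thus the unique minimizer inside $E^0$ at level $\lambda_n$ is already a candidate inside $E^{n-1}$, forcing $E^n=F^{\lambda_n}$ by the CCMN uniqueness, and the induction closes as long as $|E^n|\ge|F^{\star}|$. This also immediately yields that $(\lambda_n)$ is non-increasing and bounded below by $\lambda^{\star}$.

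For the passage to the limit, Helly's theorem provides a subsequence along which the piecewise constant $\lambda_\tau(t):=\lambda_{\floor{t/\tau}+1}$ converges pointwise to a non-increasing $\lambda(t)\ge\lambda^{\star}$. Since, for $\tau_k\to 0$, $E_{\tau_k}(t)=F^{\lambda_{\tau_k}(t)}\to F^{\lambda(t)}$ in $L^1$ thanks to continuity of $\lambda\mapsto F^\lambda$ (again from CCMN), the limit flow is $E(t)=F^{\lambda(t)}$, and is therefore a convex, volume-constrained minimizer of $P_\varphi$ inside $E^0$. Summing the telescopic identity $(|E^{n-1}|-|E^n|)/\tau=\lambda_n$ over $n$ and passing to the limit gives~\eqref{eqlambint}. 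Finally, $T^{\star}$ is defined as the first time at which $\lambda(t)=\lambda^{\star}$, equivalently $E(t)=F^{\star}$.

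Uniqueness of the limit flow on $[0,T^{\star}]$ follows because $\lambda\mapsto F^\lambda$ is one-to-one on $(\lambda^{\star},+\infty)$, so $E(t)$ is determined by $|E(t)|$, and $|E(t)|$ is characterized through~\eqref{eqlambint} by an ODE-type integral equation that has at most one non-increasing solution; hence no subsequence extraction is needed. The delicate step I anticipate is the inductive identification $E^n=F^{\lambda_n}$: it rests on the CCMN theorem to promote the local (inside $E^{n-1}$) minimality to a global (inside $E^0$) one, and it crucially needs the non-increasing property of $(\lambda_n)$ together with the monotone nestedness of the family $\{F^\lambda\}_{\lambda\ge\lambda^{\star}}$ to avoid a circular argument.
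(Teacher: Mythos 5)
Your overall strategy (identify $E^n$ with $F^{\lambda_n}$, then pass to the limit via Helly) is the paper's, but the crucial step is asserted rather than proved, and the assertion as justified is not valid. You claim that minimality of $E^1$ for \eqref{pbmin} ``implies'' that $E^1$ minimizes $P_\p(F)-\lambda_1|F|$ over $F\subset E^0$ with $\lambda_1=(|E^0|-|E^1|)/\tau$, on the sole ground that $|F\triangle E^0|=|E^0|-|F|$ for $F\subset E^0$. This does not follow: the volume deficit enters \eqref{pbmin} through the \emph{convex} function $m\mapsto m^2/(2\tau)$, and one cannot replace it by its tangent line at $m_1=|E^0\setminus E^1|$ from above. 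Concretely, with $a=|E^0|-|E^1|$, $b=|E^0|-|F|$, minimality only gives $P_\p(E^1)-P_\p(F)\le (b-a)(b+a)/(2\tau)$, which is strictly weaker than the required $P_\p(E^1)-P_\p(F)\le \lambda_1(b-a)$ both when $b>a$ and when $b<a$. This is exactly the difficulty the paper flags in the remark preceding the proposition: what the scheme gives directly is \eqref{eqoutward}, hence only the volume-constrained minimality \eqref{eqconstr}. The Lagrange-multiplier form \eqref{minlambda} with the specific value $\lambda_n=(|E^{n-1}|-|E^n|)/\tau$ is special to the convex case and is recovered through the structure results of \cite{CCMN}: the volume-constrained minimizer at volume $\ge |F^{\star}|$ is the unique convex $F^\mu$ with the matching volume, and the multiplier $\mu$ must then be identified with $\lambda_n$, e.g.\ via the Euler--Lagrange curvature identity \eqref{eqEL} (smooth elliptic $\p$, then approximation), or via convexity of the volume profile \eqref{eq:volfgeom} together with \eqref{eq:lalla}, which converts the scalar optimality of $m_n$ for $f(m)+m^2/(2\tau)$ into $-\lambda_n\in\partial f(m_n)$. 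As written, you have assumed the crux of the statement.

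The induction step has a second gap, which you half-acknowledge: to promote minimality from $E^{n-1}$ to $E^0$ you need $F^{\lambda_n}\subset E^{n-1}$, hence $\lambda_n\le\lambda_{n-1}$, and you justify this by ``$|E^n|\le|E^{n-1}|$ together with the monotone parametrization $\lambda\mapsto F^\lambda$''. But at that point $\lambda_n$ is merely a volume decrement rate, not yet the parameter of a set in the family, so the monotonicity of $\lambda\mapsto F^\lambda$ gives no information about it; what is needed is that the decrements $|E^{n-1}\setminus E^n|$ are non-increasing in $n$, the geometric counterpart of Theorem~\ref{th:controlspeed}(i)/Lemma~\ref{lem:control} (a comparison argument based on submodularity of $P_\p$), or alternatively a consequence of the identification $E^n=F^{\lambda_n}$ and of the monotonicity of $\lambda\mapsto|F^\lambda|$ --- but then your argument is circular, as you suspect. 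Once these two points are repaired, the Helly compactness, the identification of the limit with $F^{\lambda(t)}$, and \eqref{eqlambint} go through as in the paper; note finally that the uniqueness you sketch at the end is not part of this proposition but of Proposition~\ref{proconvexunique}, where it is obtained from the gradient flow of the convex function $f$, not from an informal ODE-uniqueness claim.
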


For  $m\in [0,|E^0|]$ we let 
\begin{equation}\label{eq:volfgeom}
f(m) := \min\left\{ P_\p(F): \, F\subset E^0,  |E^0\setminus F| = m\right\}.
\end{equation}
Reasoning as in Section~\ref{sec:monotone} we have that, for any $m\in [0,|E^0\setminus F^{\star}|]$ there exists a unique 
$\lambda^m\ge\lambda^{\star}$ 
such that $|E^0\setminus F^{\lambda^m}|=m$ and $P_\p(F^{\lambda^m})=f(m)$.
Moreover the function $m\to \lambda^m$ is non-increasing in this interval, and
\begin{equation}\label{eq:lalla}
-\lambda^m\in \partial f(m),
\end{equation}
which implies that $f$ is convex on $[0,|E^0\setminus F^{\star}|]$. 
With almost the same proof as Theorem~\ref{th:monuq},
we can show the following uniqueness result for the limit flow $E(t)$.
\begin{proposition}\label{proconvexunique}
Assume that $E^0$ is a bounded convex set not coinciding with its Cheeger set. Then the flow
$E(t)$ given by Proposition~\ref{proconvexdue} is unique and satisfies
\begin{equation}\label{eq:lallabis}
 \frac{d |E(t)|}{dt} = -\lambda(t) 
\end{equation}
for all  $t\in (0,T^{\star})$, where $\lambda(t)$ coincides with the mean curvature of $\partial E(t)$ inside $E^0$ and $E(T^{\star})$ is the Cheeger set of $E^0$.
\end{proposition}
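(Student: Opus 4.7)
The plan is to mirror the argument of Theorem~\ref{th:monuq}, replacing the function-valued scheme and its volume function by the geometric scheme and the isoperimetric profile $f$ of~\eqref{eq:volfgeom}. All the geometric analogues are already in place: Proposition~\ref{proconvexdue} plays the role of Proposition~\ref{prolambda}; the bijective correspondence $\lambda\mapsto F^\lambda$ on $(\lambda^\star,+\infty)$ established in~\cite{CCMN} plays the role of the analogous correspondence in Section~\ref{sec:monotone}; and \eqref{eq:lalla} plays the role of \eqref{eqml}.

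First, I would set up the discrete flow. By Proposition~\ref{proconvexdue}, as long as $|E^n|\ge |F^\star|$ one has $E^n=F^{\lambda_n}$ for a unique $\lambda_n\ge \lambda^\star$. Setting $m_n:=|E^0|-|E^n|$, the relation $(|E^{n-1}|-|E^n|)/\tau=\lambda_n$ together with~\eqref{eq:lalla} gives
\[
 \frac{m_{n-1}-m_n}{\tau}=-\lambda_n\in \partial f(m_n),
\]
which is precisely the implicit Euler scheme for the one-dimensional gradient flow $\dot m+\partial f(m)\ni 0$ with $m(0)=0$. The monotone character of the scheme (the sequence $(\lambda_n)_n$ is non-increasing and $(m_n)_n$ is non-decreasing) is the direct geometric counterpart of Theorem~\ref{th:controlspeed}; it follows from comparing the energies of $F^{\lambda_n}$ and $F^{\lambda_{n-1}}$ in \eqref{minlambda}.

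Second, I would define $\lambda_\tau(t):=\lambda_{\lfloor t/\tau\rfloor+1}$ and $m_\tau(t):=\lvert E^0\rvert-\lvert E_\tau(t)\rvert$. By Helly's theorem both admit pointwise limits $\lambda(t)$, $m(t)$, which are respectively non-increasing and non-decreasing, along some subsequence $\tau_k\to 0$. Since $\lambda\mapsto F^\lambda$ is $L^1$-continuous on $(\lambda^\star,+\infty)$ (again \cite{CCMN}), we identify $E_{\tau_k}(t)=F^{\lambda_{\tau_k}(t)}\to F^{\lambda(t)}=E(t)$ for a.e.\ $t\in(0,T^\star)$. Standard convergence results for the implicit Euler scheme for convex gradient flows on $\R$ (see~\cite{BrezisOMM}) show that $m$ is the unique solution of $\dot m+\partial f(m)\ni 0$ with $m(0)=0$, and is Lipschitz on $[\delta,T^\star]$ for every $\delta>0$. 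Because $m$ is unique, the full family $m_\tau(t)$ converges without passing to a subsequence; through the strict monotonicity $m\mapsto \lambda^m$ (which holds on $[0,|E^0\setminus F^\star|)$ since different values of $\lambda>\lambda^\star$ yield different sets $F^\lambda$) this also fixes $\lambda(t)$, and therefore $E(t)=F^{\lambda(t)}$ uniquely for $t\in (0,T^\star)$.

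Finally, to obtain \eqref{eq:lallabis}, I would note that $|E(t)|=|E^0|-m(t)$ and that the gradient flow identity gives $\dot m(t)=\lambda(t)$ for a.e.\ $t\in(0,T^\star)$, so $d|E(t)|/dt=-\lambda(t)$; the identification of $\lambda(t)$ with the (anisotropic, possibly variational) mean curvature of $\partial E(t)$ inside $E^0$ is inherited from the Euler--Lagrange characterisation of $F^{\lambda(t)}$ as a minimiser of \eqref{minlambda}. The main obstacle I anticipate is not conceptual but technical: one has to make sure that the correspondence $\lambda\leftrightarrow m$ is continuous and one-to-one on the relevant range $(\lambda^\star,+\infty)$ in order to propagate the uniqueness of $m$ back to uniqueness of the flow; this is exactly where the hypothesis that $E^0$ strictly contains its Cheeger set is used, since at $\lambda=\lambda^\star$ the correspondence degenerates and a whole interval of volumes may correspond to $\lambda^\star$.
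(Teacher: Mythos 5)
Your proposal is correct and follows essentially the same route as the paper, which itself proves this proposition by transposing the proof of Theorem~\ref{th:monuq}: the discrete volumes solve the implicit Euler scheme for the one-dimensional gradient flow of the convex profile $f$ from~\eqref{eq:volfgeom} via~\eqref{eq:lalla}, Helly's theorem gives the monotone limits $\lambda(t)$, $m(t)$, uniqueness of $\dot m+\partial f(m)\ni 0$ (cf.~\cite{BrezisOMM}) pins down $m(t)$, and the uniqueness of the (volume-constrained) minimizers $F^\lambda$ for $\lambda>\lambda^{\star}$ from~\cite{CCMN} transfers this to uniqueness of $E(t)=F^{\lambda(t)}$ and yields~\eqref{eq:lallabis}. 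Your closing remark about the degeneration of the correspondence $\lambda\leftrightarrow m$ at $\lambda^{\star}$ correctly identifies why the statement is restricted to $t<T^{\star}$ and why $E^0$ must differ from its Cheeger set.
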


\begin{remark} 
If $\varphi$ is smooth and elliptic, from \eqref{eq:lallabis} it follows that $E(t)$ satisfies \eqref{eq:geometricequation}.
\end{remark}

Recalling the proof of Proposition~\ref{proconvex}, when $d=2$ the minimizer $E^m$ in \eqref{eq:volfgeom} is uniquely characterized 
and coincides with the set $E^-_{r^m}$ as long as 
$m\ge |E^-_{r^0}|$, where $r^0$ is the inradius of $E^0$ and $r^m\ge r^0$ is such that $|E^-_{r^m}|=m$. 
When $m<|E^-_{r^0}|$ the minimizer $E^m$ is only unique up to translations.

If in addition $\p(x)=|x|$, it has been proved in \cite{LSa} that the function $f$ 
is convex on $[0,m^0]$, where $m^0=|E^0|-|B_{r^0}|$ 
and $E^m$ is a solution of \eqref{minlambda} with $\lambda=1/r_m$, among sets of volume greater of equal to $|B_{r_m}|$.
Observing also that 
$f(m) = 2\sqrt{\pi (|E^0|-m)}$ for $m\in [m_0,|E^0|)$, reasoning as above 
we get that $f$ satisfies \eqref{eq:lalla} for all $m\in (0,|E^0|)$,
so that we can partly extend the result in Proposition~\ref{proconvexunique}.

\begin{proposition}\label{proconvexuniquebis}
Let $d=2$,  $\p(x)=|x|$, and assume that $E^0$ is a bounded convex set. 
Then the flow $E(t)$ is defined on a maximal time interval $[0,T_{\rm max})$,
with 
\[
\lim_{t\to T_{\rm max}}|E(t)|=0,
\]
it is unique up to translations,
and satisfies \eqref{eq:lallabis} for all $t\in (0,T_{\rm max})$,
where $\lambda(t)$ coincides with the curvature of $\partial E(t)$ inside $E^0$.
Moreover, $E(t)$ is unique as long as $|E(t)|\ge |E^-_{r^0}|$.
\end{proposition}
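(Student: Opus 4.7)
The plan is to extend the argument of Proposition~\ref{proconvexunique} past the Cheeger time, exploiting the explicit two-dimensional isotropic description of the volume function $f$ from~\eqref{eq:volfgeom} (and the paragraph preceding the statement). The first step is to show that $f$ is convex on the whole interval $[0,|E^0|)$. Convexity on $[0,m^0]$, with $m^0=|E^0|-|B_{r^0}|$, is the result of~\cite{LSa} quoted in the excerpt, and for $m\in[m^0,|E^0|)$ the explicit expression $f(m)=2\sqrt{\pi(|E^0|-m)}$ is convex by inspection. The one-sided derivatives at $m^0$ match: the right-derivative is $-\sqrt{\pi/|B_{r^0}|}=-1/r^0$, while the left-derivative is $-\lambda^{m^0}$, and at $m=m^0$ the minimizer of~\eqref{eq:volfgeom} is the inscribed disk $B_{r^0}$ whose variational curvature is $1/r^0$, so $\lambda^{m^0}=1/r^0$. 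Hence $f$ is globally convex and $-\lambda^{m}\in\partial f(m)$ on the whole interval.

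With this, I would follow the structure of the proofs of Theorem~\ref{th:monuq} and Proposition~\ref{proconvexunique}: the discrete sequence $m_n=|E^0\setminus E^n|$ satisfies the implicit Euler scheme $(m_n-m_{n-1})/\tau=\lambda_n\in -\partial f(m_n)$ with $m(0)=0$, so by the classical theory~\cite{BrezisOMM} it converges to the unique maximal solution $m\in C([0,T_{\rm max});[0,|E^0|))$, together with $\lambda_\tau\to\lambda$. Finite-time extinction is explicit in the disk regime: for $m>m^0$ the separable ODE $\dot m=\sqrt{\pi/(|E^0|-m)}$ integrates to $|E^0|-m(t)=(3\sqrt{\pi}/2)^{2/3}(T_{\rm max}-t)^{2/3}$, giving $T_{\rm max}<+\infty$ and $|E(t)|\to 0$ as $t\uparrow T_{\rm max}$.

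Uniqueness of the shape $E(t)$ then follows from the uniqueness properties of minimizers of~\eqref{eq:volfgeom} recalled just before the statement: as long as $|E(t)|\ge|E^-_{r^0}|$, the set $E^{m(t)}$ is an inner parallel set and is uniquely determined, while for smaller volumes it is a disk of radius $\sqrt{|E(t)|/\pi}$ placed inside $E^0$, determined only up to translation. Equation~\eqref{eq:lallabis} is then the relation $-d|E(t)|/dt=\dot m(t)=\lambda(t)$; the identification of $\lambda(t)$ with the curvature of $\partial E(t)\cap\textup{int}(E^0)$ is inherited from Proposition~\ref{proconvexunique} before the Cheeger time, and reduces to the circle curvature $1/r(t)$ after it.

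The main technical point is matching the one-sided derivatives of $f$ at $m=m^0$: past this threshold the minimizer of~\eqref{eq:volfgeom} loses strict uniqueness (only up to translation), which is precisely why the extension is partial; however, the convex-analytic skeleton provided by $f$ survives the transition and continues to govern the evolution sharply until extinction.
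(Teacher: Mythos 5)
Your argument hinges on the claim that $f$ is convex on all of $[0,|E^0|)$, and that claim is false: on $[m^0,|E^0|)$ the function $f(m)=2\sqrt{\pi(|E^0|-m)}$ is \emph{concave}, since $f''(m)=-\tfrac{\sqrt{\pi}}{2}(|E^0|-m)^{-3/2}<0$ (the perimeter of a disk is a concave function of its area, and for such small volumes the constraint $F\subset E^0$ is inactive). Your matching of one-sided derivatives at $m^0$ is correct (both slopes equal $-1/r^0$), but it only shows that $m^0$ is a $C^1$ inflection point where $f$ passes from convex to concave. Consequently the convex subdifferential $\partial f(m)$ is empty for $m>m^0$, the relation \eqref{eq:lalla} can only be read there with $\partial f$ replaced by the classical derivative, and the appeal to \cite{BrezisOMM} for convergence of the implicit Euler scheme to ``the unique maximal solution'' of $\dot m+\partial f(m)\ni 0$ up to $T_{\rm max}$ is not available, since that theory requires $\partial f$ to be maximal monotone, i.e.\ $f$ convex. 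This is precisely why the statement is only a \emph{partial} extension of Proposition~\ref{proconvexunique}: the convex gradient-flow machinery of Theorem~\ref{th:monuq} stops at the threshold $m^0$, and it is not claimed (nor true) that $f$ remains convex beyond it.

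The gap is repairable, and your explicit computation already contains the key ingredient, but the repair must replace, not invoke, the convex theory past $m^0$. Once $|E^n|\le |B_{r^0}|$, monotonicity ($E^n\subset E^{n-1}$, Proposition~\ref{lemoutward}) together with the isoperimetric inequality forces the discrete minimizers to be disks inside $E^{n-1}$ (as in Example 1), the discrete areas satisfy the implicit scheme for $\dot m=\sqrt{\pi/(|E^0|-m)}$, and uniqueness of $m(t)$ plus the finite-time extinction profile follow from the locally Lipschitz, explicitly integrable ODE — but convergence of the scheme in this regime has to be checked directly (it is elementary here), not quoted from \cite{BrezisOMM}. A second, smaller inaccuracy: for volumes in $(|B_{r^0}|,|E^-_{r^0}|)$ (a nonempty window whenever the set of incenters of $E^0$ is a nondegenerate segment) the volume-constrained minimizer of \eqref{eq:volfgeom} is not a disk but a set of the form $B_{r^0}+s$ with $s$ a segment of incenters; there $f$ is affine with slope $-1/r^0$, this regime is still covered by the convexity on $[0,m^0]$ proved in \cite{LSa}, and it is already in this window — not only in the disk regime — that uniqueness degrades to uniqueness up to translations, consistently with the restriction $|E(t)|\ge |E^-_{r^0}|$ in the last assertion of the proposition.
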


We show with two simple examples that  uniqueness of the flow cannot be expected for $t>T^{\star}$.
In the following we fix $d=2$ and $\p(x)=|x|$. 

\smallskip

\noindent{\it Example 1.} Let $E^0=B_R(x_0)$ for some $R>0$ and $x_0\in\R^2$. 
Then, by the isoperimetric inequality, $T_\tau E^0$ is a ball contained in $E^0$ of radius $r$ minimizing the function
\begin{equation}\label{funball}
r\mapsto 2\pi r + \frac{\pi^2}{2\tau} (R^2-r^2)^2,
\end{equation}
that is, $r = R- \tau/(2\pi R^2) + o(\tau)$ as $\tau\to 0$. By iteration, it follows that the discrete evolutions $E_\tau(t)$ converge, 
up to a subsequence as $\tau\to 0$, to $E(t)=B_{R(t)}(x(t))$, with 
\[
R(t):= \left( R^3-\frac{3t}{2\pi}\right)^\frac 13 \qquad \text{for }t\in \left[0,\frac 23 \pi R^3\right)
\]
and $x(t)$ is a Lipschitz function such that $|\dot x(t)|\le |\dot R(t)|$ for a.e. $t\in [0,\frac 23 \pi R^3)$.

Notice that in this case the limit evolution is non-unique.

\smallskip

\noindent{\it Example 2.} Let $E^0=B_{R_1}(x_1)\cup B_{R_2}(x_2)$ for some $R_1\ge R_2>0$ and $x_1,x_2\in\R^2$ such that $|x_1-x_2|>R_1+R_2$.
As in the previous example, we have that $T_\tau E^0 = B_{r_1}(\tilde x_1)\cup B_{r_2}(\tilde x_2)$, with $B_{r_i}(\tilde x_i)\subseteq B_{R_i}(x_i)$ for $i\in \{1,2\}$, 
and the radii $r_i$ minimize the function
\begin{equation}\label{funtwoballs}
(r_1,r_2)\mapsto 2\pi (r_1+r_2)  + \frac{\pi^2}{2\tau} \left( R_1^2-r_1^2+ R_2^2-r_2^2\right)^2.
\end{equation}
If $R_1>R_2$, by an easy computation it follows that $r_1=R_1$ and $r_2$ minimize the function in \eqref{funball} with $R$ replaced by $R_2$, 
that is, $r_2 = R_2- \tau/(2\pi R_2^2) + o(\tau)$ as $\tau\to 0$. In particular, in the limit as $\tau \to 0$, we obtain the evolution
$E(t)=B_{R_1}(x_1)\cup B_{R_2(t)}(x_2(t))$, with $R_2(t)$ and $x_2(t)$ as in the previous case of a single ball.

On the other hand, If $R_1=R_2=R$ then either $r_1=R$ and $r_2$ minimize the function in \eqref{funball}, 
or viceversa $r_2=R$ and $r_1$ minimize the function in \eqref{funball}. This implies that, in the limit as $\tau \to 0$,
only one of the two balls start shrinking, whereas the other does not move until the first ball disappears.
As above the limit evolution is non-unique.


\appendix
\section{Convex functions of gradients}\label{appendixconvex}
\subsection{Convex function of measures}
We give here an alternative proof of (a simpler variant of)
the main result of~\cite{DM86}, with less hypotheses on $F$. 
We consider $F:\R^m\to [0,+\infty]$, $m\ge 1$ a convex, lower semicontinuous function.

We start by assuming that $F(0)=0$ so that $0$ is a minimizer of $F$.
In particular, denoting $F^*$ the convex conjugate of $F$, one has
$F^*(q) = \sup_p p\cdot q-F(p)\ge 0$ (choosing $p=0$), and $F^*(0)=-\min_p F(p)=0$,
hence $0$ is also a minimizer of $F^*$.

Under these assumptions, one has the following variant of~\cite[Thm~2.1]{DM86}:
\begin{theorem}\label{th:DM}
  For any vectorial Borel (or Radon) measure $\mu$, $A\subset\Om$ open,
  \begin{multline*}
    \sup\left\{ \int_{\Omega}\varphi\cdot\mu -\int_\Omega F^*(\varphi(x))\,dx:
      \varphi\in C_c^\infty(A;\R^m)\right\}
    \\= \int_A F(\mu^a(x))dx + \int_A F^\infty\left(\frac{\mu^s}{|\mu^s|}(x)\right)|\mu^s|(x)
  \end{multline*}
  (possibly infinite).
\end{theorem}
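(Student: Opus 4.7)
Plan:

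The inequality ``$\le$'' follows from Fenchel--Young combined with the dual characterization of the recession function. Since $F\ge 0=F(0)$, one has $F^*\ge 0=F^*(0)$, hence $F^\infty$ coincides with the support function of $\overline{\dom F^*}$: namely $F^\infty(p)=\sup_{q\in\dom F^*}p\cdot q$. Given $\varphi\in C_c^\infty(A;\R^m)$ with $\int_\Om F^*(\varphi)\,dx<+\infty$ (otherwise there is nothing to show), Fenchel--Young yields $\varphi\cdot\mu^a\le F(\mu^a)+F^*(\varphi)$ pointwise a.e.; the integrability, combined with the continuity of $\varphi$ and the lower semicontinuity of $F^*$, forces $\varphi(x)\in\overline{\dom F^*}$ throughout $\mathrm{supp}(\varphi)$, so that $\varphi(x)\cdot\nu^s(x)\le F^\infty(\nu^s(x))$ for $|\mu^s|$-a.e.~$x$, where $\nu^s=\mu^s/|\mu^s|$. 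Integrating and summing the two pointwise inequalities gives the upper bound.

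For the direction ``$\ge$'' the plan is a measure-representation argument. Denote the left-hand side $G(\mu,A)$; the previous step shows $G(\mu,\cdot)\le \Psi(\mu,\cdot)$, where $\Psi(\mu,\cdot)$ is the Radon measure on $\Om$ appearing on the right. Using that disjoint open subsets of $\Om$ admit test functions with disjoint supports, $G(\mu,\cdot)$ is superadditive on disjoint open sets, so a De Giorgi--Letta type argument extends it to a Borel measure on $\Om$ dominated by $\Psi(\mu,\cdot)$. It therefore suffices to match the Radon--Nikod\'ym densities with respect to the reference measure $\lambda:=dx+|\mu^s|$, which by Besicovitch differentiation reduces to a local lower bound on $G(\mu,B_r(x_0))/\lambda(B_r(x_0))$ at $\lambda$-a.e.~$x_0$.

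At a Lebesgue point $x_0$ of $\mu^a$ with $|\mu^s|(B_r)/|B_r|\to 0$, the target density is $F(\mu^a(x_0))$: I would choose $q\in\partial F(\mu^a(x_0))$ (when the subgradient is nonempty, which holds at $\mu^a$-a.e.~point) and test with $\varphi_r=\eta_r q$, where $\eta_r$ is a smooth bump supported in $B_r(x_0)$ and equal to $1$ on $B_{(1-\eps)r}(x_0)$; this yields $G(\mu,B_r)/|B_r|\ge F(\mu^a(x_0))-o(1)$ via $q\cdot \mu^a-F^*(q)=F(\mu^a)$ and averaging. At an $|\mu^s|$-a.e.~point $x_0$ with $|B_r|/|\mu^s|(B_r)\to 0$, the target density is $F^\infty(\nu^s(x_0))$; for each $\eps>0$ I pick $q_\eps\in\dom F^*$ with $q_\eps\cdot \nu^s(x_0)>F^\infty(\nu^s(x_0))-\eps$ and use the same bump construction, observing that $\int F^*(\eta_r q_\eps)\,dx\le F^*(q_\eps)|B_r|=o(|\mu^s|(B_r))$.

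The main obstacle is the case where $F^\infty(\nu^s(x_0))=+\infty$, which forces $|q_\eps|\to\infty$ and hence $F^*(q_\eps)\to\infty$; one must then choose $r$ in terms of $\eps$ and $q_\eps$ by a diagonal procedure to keep the ratio $|B_r|F^*(q_\eps)/|\mu^s|(B_r)$ infinitesimal, which is compatible with the Besicovitch-type concentration property of $|\mu^s|$ at $|\mu^s|$-a.e.~$x_0$. A secondary technical point is that when $\dom F^*$ has empty interior (the linear-growth case relevant to $BV$), one cannot freely mollify $q_\eps$, but the piecewise-constant-on-balls construction avoids this, since the required smoothness comes only from the cutoffs $\eta_r$.
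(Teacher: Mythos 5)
Your overall route --- Fenchel--Young for the inequality $\le$, then localized constant test fields $\eta_r q$ and Besicovitch differentiation for the inequality $\ge$ --- is the same as the paper's. The gap is at the measure-extension step. De~Giorgi--Letta requires, besides superadditivity on disjoint open sets and inner regularity, the subadditive estimate $G(\mu,A\cup B)\le G(\mu,A)+G(\mu,B)$ for arbitrary, possibly overlapping, open $A,B$; this is the only nontrivial hypothesis, and superadditivity alone (which is all you verify, via disjoint supports) does not produce a Borel measure --- and without the measure property the subsequent Radon--Nikod\'ym/Besicovitch differentiation of $G$ has no meaning. The paper proves subadditivity with a partition of unity $\eta_A+\eta_B=1$ subordinate to $A,B$ and the inequality $F^*(\eta\varphi)\le \eta F^*(\varphi)+(1-\eta)F^*(0)\le F^*(\varphi)$, which is precisely where the normalization $F^*(0)=0=\min F^*$ (coming from $F\ge 0=F(0)$) enters; some such argument must appear in your proof. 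Alternatively you could bypass the extension altogether, keeping only superadditivity and concluding via the standard ``supremum of densities'' lemma for superadditive set functions combined with a disjoint covering by small balls, but as written you rely on $G$ being a measure.

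Two smaller points. For the absolutely continuous density you pick $q\in\partial F(\mu^a(x_0))$ and assert the subdifferential is nonempty a.e.; this is unjustified and can fail (for instance when $\mu^a(x_0)\notin\dom F$, or at boundary points of $\dom F$), and it also cannot produce the value $+\infty$ when $F(\mu^a(x_0))=+\infty$. The correct move, as in the paper, is to use $F=F^{**}$: for every fixed $q\in\dom F^*$ the test field $\eta_r q$ gives $\liminf_{r\to 0} G(\mu,B_r(x_0))/|B_r(x_0)|\ge q\cdot\mu^a(x_0)-F^*(q)$, and one then takes the supremum over $q$. Likewise, the ``main obstacle'' you describe at singular points where $F^\infty(\mu^s/|\mu^s|(x_0))=+\infty$ is not really there: keep $q\in\dom F^*$ fixed, send $r\to 0$ first, so that $|B_r(x_0)|F^*(q)/|\mu^s|(B_r(x_0))\to 0$ at $|\mu^s|$-a.e.~$x_0$, and only afterwards take the supremum over $q$; no diagonal choice of $r$ depending on $q_\eps$ is needed. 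Your treatment of the inequality $\le$ (forcing $\varphi\in\overline{\dom F^*}$ on the support and using that $F^\infty$ is the support function of $\dom F^*$) is fine, and in fact slightly more careful than the paper's.
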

In this statement, $F^\infty$ is the recession function of $F$, given by
\begin{equation}\label{eq:recession}
  F^\infty(p) = \lim_{t\to +\infty} \frac{1}{t}F(tp+\hat p)=
  \sup_{t>0}\frac{1}{t}F(tp+\hat p) = \sup_{q: F^*(q)<+\infty} q\cdot p
\end{equation}
(where $\hat p$ is any point in the relative interior of the domain of $F$),
see~\cite[\S 1]{DM86}; $\mu=\mu^a(x)dx+\mu^s$ is the Radon-Nikod\'ym decomposition of $\mu$ with
respect to the Lebesgue measure; $\frac{\mu^s}{|\mu^s|}$ is the Radon-Besicovitch derivative
of the singular part $\mu^s$ with respect to its variation $|\mu^s|$.
\begin{proof}
  We define a measure as
  \[
    \lambda(A) = \sup\left\{ \int_{\Omega}\varphi\cdot\mu -\int_\Omega F^*(\varphi(x))\,dx:
      \varphi\in C_c^\infty(A;\R^m)\right\}
  \]
  for $A$ open.

  \paragraph{Step 1.} We claim that
  \begin{itemize}
  \item for any open sets $A,B\subset\Omega$,   $\lambda(A\cup B)\le\lambda(A)+\lambda(B)$,
  \item with equality if $A\cap B=\emptyset$, and   
  \item for any open set $A$, $\lambda(A)=\sup\{\lambda(B): B\subset\Om\text{ open},
    \overline{B}\subset A\}$.
  \end{itemize}
  Then thanks to De~Giorgi-Letta's theorem~\cite[Thm~1.53]{AFP}, the extension
  \[
    \lambda(B) = \inf\{\lambda(A): B\subset A\subset\Om, A\text{ open}\}
  \]
  defines a metric outer measure on $\Om$, and in particular a Borel positive measure.
  As the two last points follow quite obviously from the definition of $\lambda$,
  the only point to check is the first one: we consider $A,B$ open sets, possibly
  intersecting, and given $\eps>0$ we choose $\varphi\in C_c^\infty(A\cup B;\R^m)$ such that
  \[
    \lambda(A\cup B)\le \int_{\Omega}\varphi\cdot\mu -\int_\Omega F^*(\varphi(x))\,dx + \eps.
  \]
  (If $\lambda(A\cup B)=+\infty$, we require rather than the integrals are larger
  than $1/\eps$, the rest of the proof is modified accordingly.)
  We consider a smooth partition of the unity $\eta_A,\eta_B$ subject to the sets $A,B$.
  Then, using that $F^*(\eta_A\varphi)\le \eta_AF^*(\varphi)+ (1-\eta_A)F^*(0)\le F^*(\varphi)$
  (as $0$ is a minimizer of $F^*$), denoting $\varphi_{A}=\varphi\eta_{A}$,
  $\varphi_{B}=\varphi\eta_{B}$, so that in particular $\varphi=\varphi_A+\varphi_B$,
  one has:
  \begin{multline*}
    \int_{\Omega}\varphi\cdot\mu -\int_\Omega F^*(\varphi(x))\,dx
    \\\le\int_{\Omega}\varphi_A \cdot\mu -\int_\Omega F^*(\varphi_A(x))\,dx
    +\int_{\Omega}\varphi_B \cdot\mu -\int_\Omega F^*(\varphi_B(x))\,dx
    \\+ \int_{\{\eta_A\eta_B>0\}} F^*(\varphi(x))\,dx.
  \end{multline*}
  As it is possible to chose $\eta_A,\eta_B$ such that the intersection
  of their support $\{\eta_A\eta_B>0\}$ is arbitrarily small, one can assume
  that $\int_{\{\eta_A\eta_B>0\}} F^*(\varphi(x))\,dx\le\eps$. We deduce that
  \[
    \lambda(A\cup B)\le\lambda(A)+\lambda(B) + 2\eps,
  \]
  and as $\eps$ is arbitrary, the claim follows.

  \paragraph{Step 2.} Quite obviously, for any $\varphi\in C_c^\infty(A;\R^m)$ such
  that $\varphi(x)\in \dom F^*$ a.e.,
  \begin{multline*}
    \int_A F(\mu^a)dx + \int_A F^\infty\left(\frac{\mu^s}{|\mu^s|}\right)|\mu^s|
    \ge \int_A \varphi\cdot\mu^a - F^*(\varphi) dx +
    \int_A \varphi\cdot \frac{\mu^s}{|\mu^s|} |\mu^s|
    \\=
    \int_A \varphi\cdot\mu -\int_A F^*(\varphi) dx,
  \end{multline*}
  so that one needs only to show that in the sense of measures, $\lambda \ge F(\mu^a)dx + F^\infty\left(\frac{\mu^s}{|\mu^s|}\right)|\mu^s|$.

  Let $x\in\Om$, $r>0$ with $B_r(x)\subset\Om$, and choose any $q\in\dom F^*$.
  Let $\eta\in C_c^\infty(B_r(x);[0,1])$ be a smooth cutoff. Then,
  $\lambda (B_r(x))\ge \int_{B_r(x)} \eta q\cdot\mu - \int_{B_r(x)} F^*(\eta q)dz$.
  As before, as $0$ is a minimizer of $F^*$ and $\eta(x)\in [0,1]$, one has
  $F^*(\eta q) \le \eta F^*(q) + (1-\eta)F^*(0)\le F^*(q)$, so that:
  \[
    \lambda (B_r(x))\ge q\cdot \mu(B_r(x)) - |B_r(x)|F^*(q) -
    |q|\int_{B_r(x)} (1-\eta)|\mu|.
  \]
  Now, we may send $\eta$ to $\chi_{B_r(x)}$, and this sends the last term to $0$.
  It follows that
  \begin{equation}\label{eq:lambdaball}
    \lambda(B_r(x)) \ge q\cdot \mu(B_r(x)) - |B_r(x)|F^*(q).
  \end{equation}

  Next, we consider a point $x$ where both the derivatives of $\mu$ and $\lambda$ with respect
  to the Lebesgue measure exist. Then, one has:
  \[
    \mu^a(x) = \lim_{r\to 0} \frac{\mu(B_r(x))}{|B_r(x)|}.
  \]
  We rewrite~\eqref{eq:lambdaball} as
  \[
    \frac{\lambda(B_r(x))}{|B_r(x)|} \ge q\cdot\frac{\mu(B_r(x))}{|B_r(x)|} - F^*(q),
  \]
  and let $r\to 0$. We find that $\lambda^a(x) \ge q\cdot \mu^a(x)-F^*(q)$, and since this
  holds for any $q$ with $F^*(q)<+\infty$,we deduce $\lambda^a(x)\ge F^a(\mu^a(x))$.

  We now consider the singular part $\mu^s$. We recall that the Radon-Nikod\'ym derivation
  theorem (or the more general Besicovitch's derivation theorem~\cite[Thm.~2.22]{AFP})
  states that $\mu^s = \mu\restr E$ where:
  \[
    E = \left\{ x\in\Om: \lim_{r\to 0} \frac{|\mu|(B_r(x))}{|B_r(x)|} = +\infty\right\}.
  \]
  In addition, the same theorem ensures that one can further restrict $E$ to the
  points $x$ where $|\mu^s|(B_r(x))>0$ for all $r>0$ and such that
  \[
    \lim_{r\to 0} \frac{\mu^s(B_r(x))}{|\mu^s|(B_r(x))}=\frac{\mu^s}{|\mu^s|}(x)
  \]
  exists and has norm $1$; then $\mu^s = \frac{\mu^s}{|\mu^s|}|\mu^s|$. We remark also that
  $|\mu|$-a.e.~in $E$, this limit is also:
  \[
    \lim_{r\to 0}\frac{\mu(B_r(x))}{|\mu|(B_r(x))}=\frac{\mu}{|\mu|}(x)
  \]
  since $\mu = \frac{\mu}{|\mu|}|\mu|$ so that $\mu^s=\frac{\mu}{|\mu|}|\mu|\restr E$, which
  may hold only if $|\mu^s|=|\mu|\restr E$ and $\frac{\mu}{|\mu|}=\frac{\mu^s}{|\mu^s|}$
  $|\mu|$-a.e.~in $E$. We further restrict $E$ to the points where this holds.
  
  For $x\in E$,  we then consider:
  \[
    \ell(x):=  \limsup_{r\to 0} \frac{\lambda(B_r(x))}{|\mu|(B_r(x))} \in [0,+\infty],
  \]
  then thanks to~\cite[Prop.~2.21]{AFP}, for any $t\ge 0$ and any
  Borel set $F\subset \{x\in E: \ell(x)>t\}$,
  \[
    \lambda(F)\ge t |\mu|(F).
  \]
  In particular for $F\subset \{x\in E:\ell(x)=+\infty\}$, either $|\mu|(F)=0$, or
  $\lambda(F)=+\infty$.

  Let $x\in E':=\{x\in E:\ell(x)<+\infty\}$.
  Using~\eqref{eq:lambdaball} again, we find that for any $q\in \dom F^*$,
  \[
    \frac{\lambda(B_r(x))}{|\mu|(B_r(x))}  \ge q\cdot \frac{\mu(B_r(x))}{|\mu|(B_r(x))} - F^*(q)\frac{|B_r(x)|}{|\mu|(B_r(x))}.
  \]
  By definition of $E$, $|B_r(x)|/(|\mu|(B_r(x)))\to 0$ as $r\to 0$, hence taking
  the limsup, we find that $\ell(x)\ge q\cdot \frac{\mu^s}{|\mu^s|}(x)$. Since
  this is true for any $q$ with $F^*(q)<+\infty$, it follows $\ell(x)\ge F^\infty\left(\frac{\mu^s}{|\mu^s|}(x)\right)$. We then proceed as in the proof of~\cite[Thm~2.22]{AFP}
  to deduce that $\lambda\restr E'\ge \ell |\mu|\restr E'$ and it follows
  \[
    \lambda \restr E \ge F^\infty\left(\frac{\mu^s}{|\mu^s|}(x)\right) |\mu^s|.
  \]
\end{proof}

\begin{remark}The measure $\lambda$ is usually denoted $F(\mu)$.
\end{remark}

\begin{corollary}
  Let $F:\R^n\to [0,+\infty]$ be convex, lsc, and assume there is $\hat p\in\dom F$
  a minimizer of $F$. Then,
  for any $A$ open and bounded,
  \begin{multline*}
    \sup\left\{ \int_{A}\varphi\cdot\mu -\int_A F^*(\varphi(x))\,dx:
      \varphi\in C_c^\infty(A;\R^m)\right\}
    \\= \int_A F(\mu^a(x))dx + \int_A F^\infty\left(\frac{\mu^s}{|\mu^s|}(x)\right)|\mu^s|(x)
  \end{multline*}
\end{corollary}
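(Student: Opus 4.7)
The natural plan is to reduce this corollary to Theorem~\ref{th:DM} by a translation argument. Set $G(p):=F(p+\hat p)-F(\hat p)$. Since $\hat p$ is a minimizer of $F$, we have $G\ge 0$ with $G(0)=0$, and $G$ inherits convexity and lower semicontinuity from $F$. So $G$ satisfies the hypotheses of Theorem~\ref{th:DM}.

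Next, I would compute how $G^*$ and $G^\infty$ relate to $F^*$ and $F^\infty$. A direct calculation with the substitution $p'=p+\hat p$ yields
\[
G^*(q)=\sup_p\bigl(q\cdot p-F(p+\hat p)+F(\hat p)\bigr)=F^*(q)-q\cdot\hat p+F(\hat p),
\]
so that $F^*(q)=G^*(q)+q\cdot\hat p-F(\hat p)$. For the recession function, the definition~\eqref{eq:recession} gives $G^\infty(p)=\lim_{t\to\infty}G(tp)/t=F^\infty(p)$ since the additive constant and the fixed shift by $\hat p$ disappear in the limit.

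Now I would apply Theorem~\ref{th:DM} to the function $G$ and the Radon measure $\nu:=\mu-\hat p\,dx$ on $A$, which is legitimate because $\hat p\,dx$ is a bounded Radon measure on the bounded set $A$. Note that $\nu^a=\mu^a-\hat p$ while $\nu^s=\mu^s$. For any test function $\varphi\in C_c^\infty(A;\R^m)$,
\[
\int_A\varphi\cdot\mu-\int_A F^*(\varphi)\,dx=\int_A\varphi\cdot\nu-\int_A G^*(\varphi)\,dx+F(\hat p)\,|A|.
\]
Taking the supremum over $\varphi$ on both sides and invoking Theorem~\ref{th:DM} applied to $G,\nu$, the right-hand side equals
\[
\int_A G(\mu^a-\hat p)\,dx+\int_A G^\infty\!\left(\frac{\mu^s}{|\mu^s|}\right)|\mu^s|+F(\hat p)\,|A|=\int_A F(\mu^a)\,dx+\int_A F^\infty\!\left(\frac{\mu^s}{|\mu^s|}\right)|\mu^s|,
\]
which is exactly the claimed identity.

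The only technical subtleties are the book-keeping of the constants $F(\hat p)\,|A|$ and the affine term $\int_A\varphi\cdot\hat p\,dx$: the former is finite precisely because $\hat p\in\dom F$ and $A$ is bounded (this is why the hypothesis of boundedness is invoked), and the latter is finite because $\varphi$ has compact support in $A$. I do not anticipate a genuine obstacle beyond this translation bookkeeping, since all the hard work — the subadditivity and superadditivity of $\lambda$, the derivation argument at Lebesgue points, and the Besicovitch derivation for the singular part — has already been carried out for $G$ in Theorem~\ref{th:DM}.
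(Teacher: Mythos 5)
Your proposal is correct and follows essentially the same route as the paper: translate to $\tilde F(p)=F(p+\hat p)-F(\hat p)$, compute $\tilde F^*(q)=F^*(q)-q\cdot\hat p+F(\hat p)$ and $\tilde F^\infty=F^\infty$, and apply Theorem~\ref{th:DM} to the shifted measure $\mu-\hat p\,dx$, with the constants $F(\hat p)\,|A|$ cancelling exactly as you describe. The only cosmetic point is that $\tilde F^\infty=F^\infty$ is cleanest via the characterization $F^\infty(p)=\sup_{q\in\dom F^*}q\cdot p$ in~\eqref{eq:recession} (since $\dom \tilde F^*=\dom F^*$), rather than the limit $\lim_t \tilde F(tp)/t$ whose base point should lie in the relative interior of the domain.
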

\begin{proof}
  Let $\tilde F(p) = F(\hat p+p) - F(\hat p)$. Then $\tilde F$ satisfies the assumptions
  of Theorem~\ref{th:DM}. In addition, observe that
  \[
    \tilde F^*(q) = \sup_p q\cdot p -F(\hat p+p)+F(\hat p) = F(\hat p)-q\cdot\hat p + F^*(q),
  \]
  and $\tilde F^\infty=F^\infty$.
  Hence, for any $\R^n$-valued Radon measure $\mu$,
  \begin{multline*}    
    \sup\left\{ \int_{A}\varphi\cdot\mu -\int_A F^*(\varphi(x))\,dx + \int_A \varphi(x)\cdot \hat p\,dx- |A|F(\hat p):
      \varphi\in C_c^\infty(A;\R^m)\right\}
    \\
    = \int_A F(\hat p+\mu^a(x))dx - |A|F(\hat p)+ \int_A  F^\infty\left(\frac{\mu^s}{|\mu^s|}(x)\right)|\mu^s|(x)
  \end{multline*}
  Writing the above equality for the shifted measure $\mu-\hat p\, dx$ shows the claim.
\end{proof}

\subsection{Convex functions of gradients}
Now, we consider $F:\R^d\to [0,+\infty]$ convex, lsc, with $F(0)=0$,
and in addition, we assume there exist $a>0$, $b\ge 0$
such that
\[
  F(p)\ge a|p|-b
\]
for all $p\in \R^d$.
In particular, $F^*(q)\le b+\delta_{B_a(0)}$ (the characteristic of the ball of radius $a$),
and for $u\in L^1(\Om)$,
\begin{equation}\label{eq:supBV}
  \sup
  \left\{ -\int_{\Omega} u \Div\varphi\,dx -\int_\Omega F^*(\varphi(x))\,dx:
    \varphi\in C_c^\infty(A;\R^m)\right\}
\end{equation}
can be bounded only if $a|Du|(A)<+\infty$, that is if $u\in BV(A)$. (Of course, depending on $F$,
higher integrability on the gradient might also be implied.)
In that case, we define as before the measure $F(Du) := F(D^au)dx + F^\infty(D^su)$, with
$F^\infty(D^s u) = F^\infty(D^su/|D^su|)|D^su|$, and Theorem~\ref{th:DM} yields that the
sup in~\eqref{eq:supBV} is nothing but $F(Du)(A)=\int_A F(Du)$.

We therefore can define a convex, lsc.~functional for $u\in L^1(A)$ (or $L^1_\loc(\Om)$):
\[
  \Phi(u;A) := F(Du)(A)=\int_A F(Du)
\]
when $u\in BV(A)$, and $\Phi(u;A)=+\infty$ else; we denote $\Phi(u)=\Phi(u;\Om)$.
We prove here a series of useful lemmas.

\begin{lemma}\label{lem:submod} For any $u,v\in L^1(\Om)$, 
  \[
    \Phi(u\wedge v)+\Phi(u\vee v) \le \Phi(u)+\Phi(v)
    \eqno \eqref{eq:submod}
  \]
\end{lemma}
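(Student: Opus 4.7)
The plan is to establish~\eqref{eq:submod} first for smooth $u,v$, where it holds with equality, and then to pass to the limit by approximation.

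\textbf{Step 1 (smooth case).} Suppose $u,v\in C^1(\Omega)\cap W^{1,1}(\Omega)$. At every point $x$ with $u(x)\ne v(x)$ one has, in a neighbourhood of $x$, $u\wedge v=u$ and $u\vee v=v$ (or vice versa), so pointwise
\[
F(\nabla(u\wedge v))(x)+F(\nabla(u\vee v))(x)=F(\nabla u)(x)+F(\nabla v)(x).
\]
On the coincidence set $\{u=v\}$ one has $\nabla u=\nabla v$ Lebesgue-a.e.\ by the classical Stampacchia lemma, so the same identity holds there. Integrating yields equality in~\eqref{eq:submod} for smooth $u,v$.

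\textbf{Step 2 (approximation).} For arbitrary $u,v\in L^1(\Omega)$ we may assume $\Phi(u),\Phi(v)<+\infty$, whence $u,v\in BV(\Omega)$ by the linear coercivity of $F$. If $F$ is superlinear then $u,v\in W^{1,1}(\Omega)$ and ordinary mollification produces $u_\eps,v_\eps\in C^\infty(\Omega)\cap W^{1,1}(\Omega)$ with $\nabla u_\eps\to\nabla u$, $\nabla v_\eps\to\nabla v$ in $L^1$, and by continuity of the linear-growth convex integral on $W^{1,1}$, $\Phi(u_\eps)\to\Phi(u)$, $\Phi(v_\eps)\to\Phi(v)$. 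If $F$ has only linear growth, we use the Anzellotti--Giaquinta smooth-approximation theorem to obtain $u_\eps,v_\eps\in C^\infty(\Omega)\cap W^{1,1}(\Omega)$ converging to $u,v$ in $L^1(\Omega)$ and strictly in $BV$, namely $|Du_\eps|(\Omega)\to|Du|(\Omega)$ and likewise for $v$; Reshetnyak's continuity theorem, applied to the convex continuous positively $1$-homogeneous extension built from $F$ and $F^\infty$ (or equivalently using the dual representation of $\Phi$ from Theorem~\ref{th:DM}), then gives $\Phi(u_\eps)\to\Phi(u)$ and $\Phi(v_\eps)\to\Phi(v)$.

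\textbf{Step 3 (passing to the limit).} Since the lattice operations are $1$-Lipschitz, $u_\eps\wedge v_\eps\to u\wedge v$ and $u_\eps\vee v_\eps\to u\vee v$ in $L^1(\Omega)$. By lower semicontinuity of $\Phi$, which is immediate from the dual representation established in the appendix:
\[
\Phi(u\wedge v)+\Phi(u\vee v)\le\liminf_{\eps\to 0}\bigl(\Phi(u_\eps\wedge v_\eps)+\Phi(u_\eps\vee v_\eps)\bigr)=\lim_{\eps\to 0}\bigl(\Phi(u_\eps)+\Phi(v_\eps)\bigr)=\Phi(u)+\Phi(v),
\]
the middle equality coming from Step~1 applied to $u_\eps,v_\eps$.

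\textbf{Main obstacle.} The delicate point is Step~2 in the genuine linear-growth $BV$ case: the $\Phi$-convergence of smooth approximants requires strict $BV$-convergence plus a Reshetnyak-type continuity for the convex integrand of linear growth (which in particular builds on the dual representation of $\Phi$ and demands some care when $\dom F\ne\R^d$). Once this density statement is in hand, the pointwise combinatorial identity of Step~1 and the lower semicontinuity of Step~3 are routine, and in the superlinear regime the equality observed in the paper follows the same argument verbatim.
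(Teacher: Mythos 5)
Your outline (exact additivity of the lattice operations for smooth functions, then approximation plus lower semicontinuity) is the same as the paper's, but your Step~2 has a genuine gap, and it is exactly the step the lemma hinges on. First, the justification in the superlinear branch is not valid: for superlinear $F$ (say $F(\xi)=|\xi|^2$), convergence $\nabla u_\eps\to\nabla u$ in $L^1$ does \emph{not} imply $\Phi(u_\eps)\to\Phi(u)$, and there is no ``linear-growth convex integral'' to invoke since the integrand is not of linear growth (and need not even be finite). Second, the dichotomy ``superlinear or linear growth'' does not exhaust the paper's hypotheses: $F$ is an arbitrary convex lsc function with values in $[0,+\infty]$, $F(0)=0$ and a linear lower bound, so it may have mixed growth or $\dom F\neq\R^d$. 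In those cases the Anzellotti--Giaquinta plus Reshetnyak route breaks down: $F^\infty$ is $+\infty$ in some directions, so the positively $1$-homogeneous extension is not a finite continuous integrand and Reshetnyak's continuity theorem does not apply. Even in the genuine linear-growth case, plain strict convergence $|Du_\eps|(\Om)\to|Du|(\Om)$ is not enough for energy convergence (one needs area-strict convergence, i.e.\ strict convergence of the measures $(\mathcal{L}^d,Du_\eps)$; a smoothed staircase converging strictly to $u(x)=x$ shows the failure for $F(t)=\sqrt{1+t^2}-1$). You flag the general case yourself as the ``main obstacle'', but the lemma is asserted for general $F$, so the proof is incomplete precisely there. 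Finally, mollification of $u\in W^{1,1}(\Om)$ is only defined on inner subdomains, or requires an extension whose gradient cannot be controlled in terms of $F$ when $\dom F\neq\R^d$; you do not address this boundary issue.

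All of these difficulties are avoided by the paper's argument, which you could adopt to repair Step~2. Using the dual (sup) representation of Theorem~\ref{th:DM} with test fields $\varphi\in C_c^\infty(\Om_\eps;\R^d)$, $\Om_\eps=\{x\in\Om:\dist(x,\partial\Om)>\eps\}$, one moves the mollifier onto $\varphi$ and applies Jensen's inequality pointwise, $F^*\bigl((\rho_\eps*\varphi)(x)\bigr)\le \bigl(\rho_\eps*(F^*\circ\varphi)\bigr)(x)$, to get directly $\int_{\Om_\eps}F(D(u*\rho_\eps))\,dx\le\Phi(u)$ for every $\eps$; combined with the lower semicontinuity of $\Phi(\cdot;A)$ for $A\subset\subset\Om$ and the inner regularity $\Phi(u)=\sup_{A\subset\subset\Om}\Phi(u;A)$, this yields $\lim_{\eps\to0}\int_{\Om_\eps}F(D(u*\rho_\eps))\,dx=\Phi(u)$ for every convex lsc $F\ge 0$ with $F(0)=0$ and linear lower bound --- no growth dichotomy, no Reshetnyak theorem, no extension past $\partial\Om$. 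With that single mollification estimate in place of your Step~2, your Step~1 identity (written on $\Om_\eps$ for the mollified pair) and your Step~3 lower-semicontinuity argument go through and give \eqref{eq:submod}.
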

\begin{proof}
  Let $\rho$ be a symmetric mollifier with support in the unit ball, and $\rho_\eps(x)=\eps^{-d}\rho(x/\eps)$.
  Let also $\Om_\eps=\{x\in\Om:\dist(x,\partial\Om)>\eps\}$.
  Then,
  \begin{multline*}
    \int_{\Om_\eps} F(D (u*\rho_\eps)) dx
    \\=
    \sup  \left\{ -\int_{\Omega} u*\rho_\eps \Div\varphi\,dx -\int_\Omega F^*(\varphi(x))\,dx:
      \varphi\in C_c^\infty(\Om_\eps;\R^d)\right\}
    \\ = 
    \sup  \left\{ -\int_{\Om} u \Div (\rho_\eps*\varphi)\,dx -\int_\Omega F^*(\varphi(x))\,dx:
      \varphi\in C_c^\infty(\Om_\eps;\R^d)\right\}
    \\
    \le
    \sup  \left\{ -\int_{\Omega} u \Div (\rho_\eps*\varphi)\,dx -\int_\Omega F^*(\rho_\eps*\varphi(x))\,dx:
      \varphi\in C_c^\infty(\Om_\eps;\R^d)\right\} \le \Phi(u),
  \end{multline*}
  where we have used Jensen's inequality:
  \[
    F^*\left(\int_{B_1(0)}\rho(z)\varphi(x-\eps z)\,dz\right)\le
    \int_{B_1(0)}\rho(z)F^*\left(\varphi(x-\eps z)\right)\,dz.
  \]
  On the other hand, for $A\subset\subset\Om$,
  \[
    \Phi(u;A)\le \liminf_{\eps\to 0}\int_{A} F(D (u*\rho_\eps)) dx
    \le \liminf_{\eps\to 0}\int_{\Om_\eps} F(D (u*\rho_\eps)) dx
  \]
  by lower-semicontinuity of $\Phi(\cdot;A)$. Since $\Phi(u)=\sup_{A\subset\subset \Om}\Phi(u;A)$, we
  deduce that
  \[
    \lim_{\eps\to 0}\int_{\Om_\eps} F(D (u*\rho_\eps)) dx= \Phi(u).
  \]
  Given $u,v\in L^1(\Om)$, one has (quite obviously, and even if it is
  not finite):
  \[
    \int_{\Om_\eps}\!\! F(D (u*\rho_\eps \wedge v*\rho_\eps))dx
    +     \int_{\Om_\eps}\!\! F(D (u*\rho_\eps \vee v*\rho_\eps))dx
    = 
    \int_{\Om_\eps}\!\! F(D (u*\rho_\eps ))dx
    +     \int_{\Om_\eps}\!\! F(D (v*\rho_\eps))dx.
  \]
  Passing to the limit, and using the lower-semicontinuity of $\Phi$ again, we deduce~\eqref{eq:submod}.
\end{proof}
\begin{lemma}\label{lem:density}
  Let $D_\infty :=\{ u\in L^1(\Om)\cap \dom \Phi: \partial\Phi(u)\cap L^\infty(\Om)\neq\emptyset\}$.
  Then $\overline{D_\infty} \supseteq \dom\Phi$.
\end{lemma}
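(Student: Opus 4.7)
The plan is to exhibit a concrete approximation of any $u\in\dom\Phi$ by elements of $D_\infty$, via a resolvent-type construction built from the very minimizing movement scheme of the paper. Fix $u\in\dom\Phi$ and, for $\tau>0$, let $u_\tau$ be a minimizer of
\[
  \Phi(v) + \frac{1}{2\tau}\|v-u\|_1^2,
\]
whose existence follows from the same direct method argument used for~\eqref{eq:MMPhi}.

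First I would check that $u_\tau\in D_\infty$. By~\eqref{eq:ELcompactform} applied with $u^{n-1}=u$ and $u^n=u_\tau$, there exists a measurable selection $\xi\in\sign(u_\tau-u)$ (so $\|\xi\|_\infty\le 1$) such that
\[
  p_\tau := -\frac{\|u_\tau-u\|_1}{\tau}\,\xi \;\in\;\partial\Phi(u_\tau).
\]
Since $\|p_\tau\|_\infty\le \|u_\tau-u\|_1/\tau<+\infty$, we have $\partial\Phi(u_\tau)\cap L^\infty(\Om)\neq\emptyset$, hence $u_\tau\in D_\infty$.

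Next I would show $u_\tau\to u$ in $L^1(\Om)$ as $\tau\to 0$. Testing the minimality of $u_\tau$ against the competitor $v=u\in\dom\Phi$ yields
\[
  \Phi(u_\tau) + \frac{1}{2\tau}\|u_\tau-u\|_1^2 \;\le\; \Phi(u),
\]
so that $\|u_\tau-u\|_1\le \sqrt{2\tau\,\Phi(u)}\to 0$ as $\tau\to 0$ (and incidentally $\Phi(u_\tau)\le\Phi(u)$ stays bounded). This proves $u\in\overline{D_\infty}$ and, since $u\in\dom\Phi$ was arbitrary, $\dom\Phi\subseteq\overline{D_\infty}$.

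There is no substantial obstacle: existence of $u_\tau$ and the form of its Euler-Lagrange equation have already been established in Section~\ref{sec:existence}, and the $L^1$-convergence follows from the minimality inequality itself. The only mild subtlety worth a line of care is that the bound $\|p_\tau\|_\infty \le \|u_\tau-u\|_1/\tau$ is finite but blows up as $\tau\to 0$; this is consistent with the statement, which only asks for approximating sequences in $D_\infty$, not uniformly bounded subgradients.
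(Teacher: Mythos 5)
Your proof is correct, but it takes a genuinely different route from the paper. You approximate $u$ by the $L^1$-proximal points $u_\tau$ themselves and read off a bounded subgradient directly from the Euler--Lagrange inclusion~\eqref{eq:ELcompactform}, namely $p_\tau=-\tfrac{\|u_\tau-u\|_1}{\tau}\xi\in\partial\Phi(u_\tau)$ with $\|p_\tau\|_\infty\le\|u_\tau-u\|_1/\tau$, and the minimality comparison with $v=u$ gives $\|u_\tau-u\|_1\le\sqrt{2\tau\Phi(u)}\to 0$. This is legitimate: \eqref{eq:ELcompactform} is established in Section~\ref{sec:existence} (via \cite[Prop.~5.6]{EkelandTemam} in the $L^1$--$L^\infty$ duality) for a minimizer of $\Phi(\cdot)+\|\cdot-v\|_1^2/(2\tau)$ with an arbitrary $v\in L^1(\Om)$, it does not rely on Lemma~\ref{lem:density}, so there is no circularity, and existence of $u_\tau$ follows from coercivity of $F$ and lower semicontinuity of $\Phi$ exactly as for~\eqref{eq:MMPhi}. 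The paper instead first truncates, $u_k=(-k)\vee(u\wedge k)$, using the submodularity Lemma~\ref{lem:submod} to get $\Phi(u_k)\le\Phi(u)$, and then takes the $L^2$-resolvent $v_{k,l}$ of $\Phi$ at $u_k$, whose Euler--Lagrange equation gives the subgradient $q_{k,l}=l(u_k-v_{k,l})$ with $\|q_{k,l}\|_\infty\le 2kl$ thanks to the bound $\|v_{k,l}\|_\infty\le k$. Your argument is shorter and stays entirely within the scheme the paper already set up, at the price of leaning on the nontrivial subdifferential sum rule in the non-reflexive $L^1$ setting; the paper's construction avoids that duality argument (using only the elementary $L^2$ prox) and yields additionally approximants that are uniformly bounded in $L^\infty$, which can be convenient elsewhere. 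Your closing remark is also the right one: the blow-up of $\|p_\tau\|_\infty$ as $\tau\to0$ is harmless since only density, not a uniform bound, is claimed.
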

\begin{proof}
  Let $u\in \dom\Phi$ and for $k\ge 1$, $u_k=(-k)\vee(u\wedge k)$. Then $u_k\to u$
  (in $L^1(\Om)$, or $L^p(\Om)$ if $u\in L^p(\Om)$, $p\in [1,+\infty]$; in particular using
  $\Phi(u)<+\infty$ this holds for any $p\le d/(d-1)$). We claim that $\Phi(u_k)\le\Phi(u)$,
  in fact, Lemma~\ref{lem:submod} shows that $\Phi(u\wedge k)+\Phi(u\vee k)
  \le \Phi(u)+\Phi(k)=\Phi(u)$, the claim follows.
  We deduce (thanks to the lower semicontinuity
  of $\Phi$) $\lim_{k\to+\infty}\Phi(u_k)= \Phi(u)$.)
  
  Now, we consider $v_{k,l}$ the minimizer of
  \[
    \frac{l}{2}\int_\Om (v-u_k)^2 dx + \Phi(v).
  \]
  We have that $\|v_{k,l}\|_\infty\le k$, indeed otherwise letting
  $v'=(-k)\vee(v_{k,l}\wedge k)$, one would have $\int_\Om (v'-u_k)^2dx< \int_\Om(v_{k,l}-u_k)^2dx$,
  and $\Phi(v')\le\Phi(v_{k,l})$ (as before).
  The Euler-Lagrange equation for this problem can be written:
  \[
    q_{k,l}:=l(u_k-v_{k,l}) \in \partial\Phi(v_{k,l}),
  \]
  and since $\|q_{k,l}\|_\infty\le 2k l$,
  this shows that $\partial\Phi(v_{k,l})\cap L^\infty(\Om)\neq\emptyset$.
  When $l\to\infty$, $v_{k,l}\to u_k$ (in $L^2(\Om)$, and then also $L^1(\Om)$ or
  $L^p(\Om)$ for any $p<+\infty$), which shows the Lemma.
\end{proof}
We state a last result which
describes the subgradient of $\Phi$ in the simpler
case where $F$ and $F^*$ are superlinear, so that, in particular,
the domain of $\Phi$ is a subset of $W^{1,1}(\Om)$.
The case where $F$ is one-homogeneous is
discussed for instance in~\cite{MollAnisotropic},
and has been recently generalized to the Lipschitz
case in~\cite{gorny2022dualitybased}. 
The fully general case is a difficult issue. 
In the lemma below, one considers $\Phi$ as a functional
in $L^p(\Om)$, $1\le p\le+\infty$, and $\Phi^*$ is defined
in $L^{p'}(\Om)$ with $1/p+1/p'=1$. In the non-reflexive
cases $p\in\{1,+\infty\}$, the convergence $-\Div\vp_n$ to $w$
below has to be understood in the weak or weak-$*$ sense.


\begin{lemma}\label{lem:subgrad}
  Assume $F,F^*$ have full domain (equivalently, $F,F^*$ are superlinear).
  If $w\in\partial \Phi(u)$, there exists $z\in L^1(\Om;\R^d)$
  such that $z(x)\in\partial F(Du(x))$ a.e.~and $w=-\Div z$ in
  the distributional sense (with $z\cdot\nu_\Om=0$ in the weak
  sense on $\partial\Om$, that is, $\int z\cdot Dv\,dx=\int w v\,dx$
  for any $w\in W^{1,\infty}(\Om)$). In particular, if $F$ is $C^1$,
  one has $w = -\Div D F(Du)$.
\end{lemma}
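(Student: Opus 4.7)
The strategy is Fenchel--Rockafellar duality. Under the assumption that both $F$ and $F^*$ are superlinear (equivalently, both have full domain), $F$ is continuous and finite on all of $\R^d$, $\partial F(\xi)\ne\emptyset$ for every $\xi$, and any $u\in\dom\Phi$ lies in $W^{1,1}(\Om)$ with $\Phi(u)=\int_\Om F(\nabla u)\,dx$. The condition $w\in\partial\Phi(u)$ says that $u$ is a minimizer of $v\mapsto\Phi(v)-\int_\Om wv\,dx$, or equivalently that $\Phi(u)+\Phi^*(w)=\int_\Om wu\,dx$. I would identify $\Phi^*(w)$ with a dual problem whose attainment produces the vector field $z$.

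Concretely, I would set up the primal
\[
\mathcal{P}:=\inf_v\Bigl(\Phi(v)-\int_\Om wv\,dx\Bigr)=-\Phi^*(w)
\]
and the dual
\[
\mathcal{D}:=\sup\Bigl\{-\textstyle\int_\Om F^*(z)\,dx:\ z\in L^1(\Om;\R^d),\ \int_\Om z\cdot\nabla v\,dx=\int_\Om wv\,dx\ \forall v\in W^{1,\infty}(\Om)\Bigr\},
\]
the constraint encoding both $-\Div z=w$ distributionally and the weak Neumann condition $z\cdot\nu_\Om=0$. Weak duality $\mathcal{P}\ge\mathcal{D}$ is immediate from the pointwise Fenchel--Young inequality $z\cdot\nabla v\le F(\nabla v)+F^*(z)$. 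The core step is strong duality with attainment in $\mathcal{D}$: I would apply \cite[Prop.~III.4.1]{EkelandTemam} in a reflexive Sobolev space $V=W^{1,r}(\Om)$ with $Y=L^r(\Om;\R^d)$, $\Lambda=\nabla$, $G_1(v)=-\int wv\,dx$ and $G_2(\xi)=\int F(\xi)\,dx$; the qualification condition (continuity of $G_2$ at $\Lambda v_0$ for some admissible $v_0$) is checked at constant test functions using that $F$ is locally bounded thanks to full domain plus convexity.

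From strong duality I get an optimizer $z^*$ of $\mathcal{D}$ with $\int_\Om F^*(z^*)\,dx=-\mathcal{P}=\Phi^*(w)$. Using the constraint, approximating $u$ by smooth bounded functions and passing to the limit (justified because $z^*\cdot\nabla u\in L^1$ by Fenchel--Young, since $\int F(\nabla u)+\int F^*(z^*)<+\infty$), I obtain $\int_\Om wu\,dx=\int_\Om z^*\cdot\nabla u\,dx$. Combining with $\Phi(u)+\Phi^*(w)=\int wu$ yields
\[
\int_\Om\bigl(F(\nabla u)+F^*(z^*)-z^*\cdot\nabla u\bigr)\,dx=0,
\]
and since the integrand is non-negative, it vanishes a.e., forcing $z^*(x)\in\partial F(\nabla u(x))$ for a.e.~$x$. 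When $F\in C^1$, $\partial F$ is single-valued, so $z^*=\nabla F(\nabla u)$ and $w=-\Div\nabla F(\nabla u)$.

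The main obstacle is the strong duality step: picking a function-space framework in which $G_2$ is continuous without imposing a precise growth rate on $F$ beyond superlinearity. If the direct choice of $r$ turns out to be delicate, I would instead regularize by $F_\eps(\xi)=F(\xi)+\tfrac{\eps}{2}|\xi|^2$, for which $F_\eps$ has quadratic coercivity so Fenchel--Rockafellar applies cleanly in $V=H^1(\Om)$, obtain $z^*_\eps$ with the desired properties, and pass to the limit $\eps\to 0$ using the uniform bound $\int F^*(z^*_\eps)\le\Phi(u)+\int wu\,dx$ together with de~la~Vall\'ee Poussin (exploiting superlinearity of $F^*$) to extract a weak $L^1$ limit $z^*$, with lower-semicontinuity preserving all identities.
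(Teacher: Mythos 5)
Your overall strategy (produce $z$ by convex duality and then use the vanishing of the Fenchel--Young gap to get $z\in\partial F(Du)$ a.e.) is the same as the paper's, and the final step of your argument is essentially the paper's Step~2. The genuine gap is in the step you yourself flag as the main obstacle, namely strong duality with dual attainment, and your fallback does not repair it. The qualification hypothesis in \cite[Ch.~III]{EkelandTemam} requires the integral functional $G_2(\xi)=\int_\Om F(\xi)\,dx$ to be finite and continuous on a neighbourhood of some $\Lambda v_0$ in the chosen space $Y=L^r$ (or $L^2$); this is an \emph{upper} growth condition $F(\xi)\le C(1+|\xi|^r)$, which is not implied by ``full domain''. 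For $F(\xi)=e^{|\xi|^2}-1$ (which satisfies all hypotheses of Lemma~\ref{lem:subgrad}) the functional $\xi\mapsto\int_\Om F(\xi)\,dx$ is $+\infty$ on a dense subset of every $L^r$-ball around a constant, hence nowhere continuous, and adding $\tfrac{\eps}{2}|\xi|^2$ only increases $F$: quadratic \emph{coercivity} of $F_\eps$ helps the primal be coercive in $H^1(\Om)$, but it does nothing for the continuity of $G_2$ on $L^2$, so Fenchel--Rockafellar does not ``apply cleanly'' there either. Moreover, even granting duality for the regularized problem, the complementary slackness you would obtain is $z^*_\eps\in\partial F_\eps(\nabla u_\eps)$ at a minimizer $u_\eps$ of the regularized primal (note $\Phi_\eps(u)$ may be $+\infty$ since $\nabla u$ need not lie in $L^2$), not at $u$; and your uniform bound controls $\int F^*_\eps(z^*_\eps)$, not $\int F^*(z^*_\eps)$, so the de~la~Vall\'ee~Poussin argument needs a uniform-in-$\eps$ superlinear bound which is not provided. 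A smaller but real gap of the same kind: ``$z^*\cdot\nabla u\in L^1$ by Fenchel--Young'' is only a one-sided bound ($z^*\cdot\nabla u\le F(\nabla u)+F^*(z^*)$ controls the positive part, not the negative one), so the identity $\int z^*\cdot\nabla u\,dx=\int wu\,dx$ needs a truncation argument, not just this remark.

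For comparison, the paper sidesteps attainment altogether: by Theorem~\ref{th:DM}, $\Phi=H^*$ where $H(w)=\min\{\int_\Om F^*(\vp)\,dx:\vp\in C_c^\infty,\ -\Div\vp=w\}$, so $\Phi^*$ is the lsc envelope of $H$ and one can pick a \emph{minimizing sequence} $\vp_n$ of smooth fields with $-\Div\vp_n\to w$; superlinearity of $F^*$ gives equi-integrability and hence a weak $L^1$ limit $z$ (this replaces dual attainment, and no qualification condition is ever needed). Then $A_n=F(Du)+F^*(\vp_n)-\vp_n\cdot Du\ge0$ has $\int_\Om A_n\,dx\to\Phi(u)+\Phi^*(w)-\int_\Om wu\,dx=0$, and the pairing of the weak limit $z$ with $Du$ is made legitimate by restricting to $E_M=\{|Du|\le M\}$ before letting $M\to\infty$ --- exactly the device that resolves the integrability issue you glossed over. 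If you want to salvage a genuine Fenchel--Rockafellar route you would have to work with $Y=L^\infty(\Om;\R^d)$ (where $G_2$ \emph{is} continuous because $F$ is locally bounded), but then the dual optimizer lives a priori in $(L^\infty)^*$, i.e.\ among finitely additive measures, and showing it is an $L^1$ field again requires the superlinearity of $F^*$ and extra work; the paper's minimizing-sequence argument is the cheaper and correct way around this.
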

\begin{proof} We first give in a first step
  a partial description of the conjugate
  of $\Phi$ (see~\cite{bouchitte2020convex} for a description a general setting); then in Step~2, we characterize the subgradients.

  \paragraph{Step~1 - Description of the conjugate}
  We introduce the convex function:
  \[
    H(w) :=\begin{cases}
      \min \left\{\int_\Om F^*(\vp)\,dx: \vp\in C_c^\infty(\Om;\R^d), -\Div \vp=w\right\}
      & \text{ if  this set is nonempty;}\\
      +\infty& \text{ else.}
    \end{cases}
  \]
  Then one has (Theorem~\ref{th:DM})
  $\Phi(u)=\int_\Om F(Du) = \sup_{q} \int_\Om w u\,dx- H(w) = H^*(u)$.
  Hence, $\Phi^*(w) = H^{**}(w)$ is the lsc.~envelope of $H$ and one has:
  \[
    \Phi^*(w) = \inf\left\{ \liminf_n \int_\Om F^*(\vp_n)\,dx:
      \vp_n\in C_c^\infty(\Om;\R^d),\ 
      -\Div\vp_n\to w \text{ in }L^{p'}(\Om)\right\}.
  \]
  Now, since $F^*$ is superlinear, if $(\vp_n)_n$ is a  minimizing
  sequence in the above infimum, and if the latter is finite,
  $(\vp_n)_n$  has (up to a subsequence)
  a weak limit $z$ in $L^1(\Om;\R^d)$. For any
  $v\in W^{1,\infty}(\Om)$, one has
  \[
    \int_\Om \vp_n\cdot Dv\,dx = -\int_\Om \Div\vp_n v\,dx
  \]
  so that in the limit,
  \begin{equation}\label{eq:divw}
    \forall v\in W^{1,\infty}(\Om)\,, \quad
    \int_\Om z \cdot Dv\,dx = \int_\Om  w v\,dx.
  \end{equation}
  (In particular, of course, $-\Div z=w$ in the sense of distributions.)
  
  In addition, by lower-semicontinuity, one has
  \begin{equation*}
    \int_\Om F^*(z)\,dx \le \liminf_n\int_\Om F^*(\vp_n)\,dx
    = \Phi^*(w).
  \end{equation*}
  
  \paragraph{Step 2 - Subgradient}
  Let now  $w\in\partial\Phi(u)$. Equivalently $u\in\partial\Phi^*(w)$,
  and one has
  \[
    \Phi(u) + \Phi^*(w) = \int_\Om u w\,dx
  \]
  Since $\Phi^*(w)<+\infty$, we may consider $(\vp_n)_n$ and $z$ as built in Step~1,
  with $\vp_n\wto z$ in $L^1(\Om)$.

  One has $u\in W^{1,1}(\Om)$ and $A_n:=F(Du)+F^*(\vp_n)-\vp_n\cdot Du\ge 0$ a.e.,
  while
  \[
    \int_\Om A_n \,dx = \Phi(u)+\int_\Om F^*(\vp_n)\,dx +\int_\Om u\Div\vp_n\,dx
    \to \Phi(u)+\Phi^*(w)-\int_\Om w u\,dx = 0
  \]
  since $-\Div\vp_n\to w$ in $L^{p'}(\Om)$ and $u\in L^p(\Om)$. Hence,
  $A_n\to 0$ in $L^1(\Om)$. In particular, for any $M\ge 0$,
  letting $E_M:=\{x:|Du(x)|\le M\}$, one has
  $\int_{E_M} A_n\,dx \to 0$, $\int_\Om \vp_n\cdot (\chi_{E_M}Du)\,dx\to \int_{E_M} z\cdot Du\,dx$, and $\int_{E_M} F^*(z)\,dx\le\liminf_n\int_{E_M} F^*(\vp_n)$ and
  we deduce
  \[
    \int_{E_M} F^*(z)+F(Du) - z\cdot Du\,dx \le 0.
  \]
  Since $M$ is arbitrary,
  it follows that $F(Du)+F^*(z) = z\cdot Du$ (equivalently $z\in\partial F(Du)$) a.e.~in~$\Om$.
\end{proof}

\subsection{The Dirichlet case}\label{app:Diri}

We describe here how the previous results should be adapted to consider Dirichlet
boundary constraints. In this section, $\Om$ is a bounded, open, Lipschitz-regular
set in $\R^d$. We consider also $u^0\in L^1(\partial\Om)$ and
we assume that
there exists an extension $u^0\in W^{1,1}(\Om)$ and $t>1$ with
\begin{equation}\label{eq:condu0}
  \int_\Om F(tDu^0(x))\,dx < +\infty.
\end{equation}
(If $F$ is Lipschitz this is  a standard result, if $F$ has a higher growth
this imposes an additional constraint on $u^0_{|\partial\Om}$.)

Let us introduce the functional:
\[
  \Psi_0(u) = \begin{cases}
    \Phi(u) \in [0,+\infty] & \text{ if } u\in W^{1,1}(\Om), u=u^0 \text{ on }\partial\Om\,,\\
    +\infty & \text{ else.}
  \end{cases}
\]
In case $F$ is superlinear, it is well known that $\Psi=\Psi_0$ is
lower-semicontinuous, and can be recovered by duality as:
\begin{equation}\label{eq:defPsi}
  \Psi(u) = \sup\left\{ \int_{\partial\Om} u^0\vp\cdot\nu_\Om d\H^{d-1} - 
    \int_\Om (u\Div\vp + F^*(\vp)) dx : \vp\in C^\infty(\overline{\Om};\R^d)\right\}
\end{equation}
(where $\nu_\Om$ is the outer normal to $\partial\Om$)
if $u=u^0$ on $\partial\Om$. Here we have simply used Green's formula and the boundary condition to transform the term $\int_\Om\vp\cdot Du \,dx$. 

In the general case, one checks that the functional $\Psi$ given by~\eqref{eq:defPsi} is
also given, for a general $u\in L^1(\Om)$ by:
\begin{equation}\label{eq:Psi2}
  \Psi(u) =\begin{cases} \int_\Om F(Du) + \int_{\partial\Om} F^\infty((u^0-u)\nu_u) d\H^{d-1} \in [0,+\infty] & \text{ if } u\in BV(\Om)\,,\\
    +\infty & \text{ else.}
  \end{cases}
\end{equation}
This is easily obtained applying Theorem~\ref{th:DM} to the measure $Du\restr\Om + (u-u^0)\nu_\Om\H^{d-1}\restr\partial\Om$ (in a larger domain), when $u$ is $BV$, so that
in particular its trace is well defined on the boundary.

The fact $\Psi$ is then the lower-semicontinuous envelope of $\Psi_0$ is well
known in case $F$ is superlinear (since then $\Psi=\Psi_0$) and in
case $F$ is Lipschitz (approximating $u$ with
$u + \chi_{\{\dist(\cdot,\partial\Om)<1/n\}}(u^0-u)$). The situation where
$F$ is neither Lipschitz nor superlinear remains unclear, we choose
in that case to use~\eqref{eq:defPsi}--\eqref{eq:Psi2} as a definition
for the Dirichlet problem.

We turn ourself to the characterization of the subgradient of $\Psi$.
As before, we restrict ourselves to the Lipschitz case, for
which we refer to~\cite{gorny2022dualitybased}:
in this case, one can show that $w\in\partial\Psi(u)$ if and only
if $w=-\Div z$ with $z\cdot Du=F(Du)+F^*(z)$ in the sense of~\cite{Anzellotti},
and $z\cdot\nu_\Om= F(\sign(u-u^0)\nu_\Om)$ a.e.~on the boundary.
In the superlinear case, the following holds:
\begin{lemma}\label{lem:subgradDiri}
  Assume $F,F^*$ have full domain (equivalently, $F,F^*$ are superlinear).
  If $w\in\partial \Psi(u)$, there exists $z\in L^1(\Om;\R^d)$
  such that $z(x)\in\partial F(Du(x))$ a.e.~and $w=-\Div z$ in
  the distributional sense.
  In particular, if $F$ is $C^1$,   one has $w = -\Div D F(Du)$.
\end{lemma}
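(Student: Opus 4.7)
The plan is to mimic the two-step argument of Lemma~\ref{lem:subgrad}, with the Green's formula correctly incorporating the Dirichlet datum $u^{0}$ on $\partial\Om$. First, I would identify $\Psi^{*}$ as the lower-semicontinuous envelope of the functional
\[
H(w)\,:=\,\inf\left\{\int_\Om F^{*}(\vp)\,dx-\int_{\partial\Om} u^{0}\,\vp\cdot\nu_\Om\,d\H^{d-1}:\vp\in C^\infty(\overline\Om;\R^d),\ -\Div\vp=w\right\},
\]
which follows immediately from the duality formula~\eqref{eq:defPsi}, since that formula shows $\Psi=H^{*}$. Hence $\Psi^{*}(w)=H^{**}(w)$ can be recovered as the infimum of $\liminf_n\!\big[\int_\Om F^{*}(\vp_n)-\int_{\partial\Om} u^{0}\vp_n\cdot\nu_\Om\big]$ over sequences $(\vp_n)_n\subset C^\infty(\overline\Om;\R^d)$ with $-\Div\vp_n\to w$ in $L^{p'}(\Om)$.

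Since $F^{*}$ is superlinear (as $F$ has full domain), the bound on $\int_\Om F^{*}(\vp_n)\,dx$ provides equi-integrability, so up to a subsequence $\vp_n\rightharpoonup z$ weakly in $L^{1}(\Om;\R^d)$. Testing against any $v\in C_c^{\infty}(\Om)$ yields $-\Div z=w$ in the distributional sense. Note also that since $F^\infty\equiv+\infty$ off the origin in the superlinear case, $\Psi(u)<+\infty$ forces $u\in W^{1,1}(\Om)$ with trace $u^{0}$ on $\partial\Om$, so classical Green's identity applies to $\int_\Om\vp_n\cdot Du\,dx$.

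Next, for $w\in\partial\Psi(u)$, one has $\Psi(u)+\Psi^{*}(w)=\int_\Om uw\,dx$. Introduce the nonnegative Fenchel defect $A_n:=F(Du)+F^{*}(\vp_n)-\vp_n\cdot Du\ge0$; applying Green's formula with boundary trace $u^{0}$ gives
\[
\int_\Om A_n\,dx\,=\,\Psi(u)+\int_\Om F^{*}(\vp_n)\,dx+\int_\Om u\,\Div\vp_n\,dx-\int_{\partial\Om} u^{0}\vp_n\cdot\nu_\Om\,d\H^{d-1}.
\]
The middle term converges to $-\int_\Om uw\,dx$ by the $L^{p}$-$L^{p'}$ pairing, while the outer two boundary/energy terms converge to $\Psi^{*}(w)$ by construction, so $\int_\Om A_n\,dx\to 0$, i.e.\ $A_n\to 0$ in $L^{1}(\Om)$.

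Finally, truncating at $E_M=\{|Du|\le M\}$ and using the lower-semicontinuity of the convex integral $\int_{E_M} F^{*}(\cdot)\,dx$ together with $\chi_{E_M}Du\in L^{\infty}$ against $\vp_n\rightharpoonup z$ in $L^{1}$, one obtains $\int_{E_M}\!\big(F(Du)+F^{*}(z)-z\cdot Du\big)\,dx\le 0$ for every $M$, hence $F(Du)+F^{*}(z)=z\cdot Du$ a.e., which is exactly $z\in\partial F(Du)$ a.e. The last assertion for $F\in C^{1}$ is then $z=DF(Du)$, so $w=-\Div DF(Du)$. The main subtlety compared to the Neumann case~Lemma~\ref{lem:subgrad} is correctly handling the boundary contribution $\int_{\partial\Om}u^{0}\vp_n\cdot\nu_\Om$: since we only know $\vp_n\rightharpoonup z$ weakly in $L^{1}$ (without a priori control on the normal trace of the limit), we never pass to the limit in that boundary integral \emph{by itself}; it only enters as part of the combination defining $\Psi^{*}(w)$, which converges by the very selection of the minimizing sequence.
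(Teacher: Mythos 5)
Your overall strategy is the right one and matches the paper's: describe $\Psi^*$ as the lsc envelope of the dual functional $H$ built from fields $\vp\in C^\infty(\overline\Om;\R^d)$ with the boundary term, take a sequence $\vp_n$ realizing $\Psi^*(w)$, extract a weak $L^1$ limit $z$, and kill the Fenchel defect $A_n=F(Du)+F^*(\vp_n)-\vp_n\cdot Du$ using $\Psi(u)+\Psi^*(w)=\int_\Om uw\,dx$, truncating on $E_M=\{|Du|\le M\}$ to conclude $z\in\partial F(Du)$ a.e.

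However, there is a genuine gap at the compactness step. You write that ``the bound on $\int_\Om F^*(\vp_n)\,dx$ provides equi-integrability,'' but no such bound has been established: what is bounded along the sequence is only the combination $\int_\Om F^*(\vp_n)\,dx-\int_{\partial\Om}u^0\vp_n\cdot\nu_\Om\,d\H^{d-1}$, and a priori both terms could blow up simultaneously while their difference stays near $\Psi^*(w)$. This is precisely the point where the Dirichlet case differs from Lemma~\ref{lem:subgrad}, and your closing remark that you ``never pass to the limit in the boundary integral by itself'' does not resolve it, since you do need at least a uniform bound on $\int_\Om F^*(\vp_n)\,dx$ to invoke superlinearity of $F^*$ and extract $\vp_n\rightharpoonup z$ in $L^1$. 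The paper closes this gap using the standing assumption \eqref{eq:condu0}: writing by Green's formula
\[
  \int_{\partial\Om}u^0\vp_n\cdot\nu_\Om\,d\H^{d-1}
  =\int_\Om u^0\Div\vp_n\,dx+\int_\Om \vp_n\cdot Du^0\,dx
  \le C+\frac1t\int_\Om F(tDu^0)\,dx+\frac1t\int_\Om F^*(\vp_n)\,dx,
\]
with $t>1$ from \eqref{eq:condu0} (and $\|\Div\vp_n\|_{p'}$ bounded), so that the term $\frac1t\int_\Om F^*(\vp_n)\,dx$ can be absorbed, yielding $(1-\frac1t)\int_\Om F^*(\vp_n)\,dx\le C'$. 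With this estimate added, the rest of your argument (distributional identity $-\Div z=w$, convergence of $\int_\Om A_n\,dx$ to $0$, truncation and lower semicontinuity on $E_M$) goes through exactly as in the paper.
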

\begin{proof}
  The proof is almost the same as for Lemma~\ref{lem:subgrad} and we
  only sketch it. We introduce again:
  \[
    H(w) :=\begin{cases}
      \min \left\{\int_\Om F^*(\vp)dx -\int_{\partial\Om} u^0\vp\cdot\nu_\Om
        \,d\H^{d-1}: \vp\in C_c^\infty(\overline{\Om};\R^d), -\Div \vp=w\right\}
      \\ \hfill \text{ if  this set is nonempty;} \\
      +\infty \text{ else.}
    \end{cases}
  \]
  and we find that since $\Psi=H^*$, $\Psi^*$ is the convex, l.s.c.~envelope of $H$. Hence if $w\in\partial\Psi(u)$ (where $u=u^0$ on $\partial\Om$,
  since $F^\infty\equiv+\infty$), there is $\vp_n$ with
  $-\Div\vp_n\to w$,
  \[
    \lim_n \int_\Om F^*(\vp_n)dx -\int_{\partial\Om} u^0\vp_n\cdot\nu_\Om
    \,d\H^{d-1} = \Psi^*(w)
  \]
  and
  \[ \Psi^*(w)+\Psi(u) = \int_\Om w u\,dx. \]
  Using
  \[
    \int_{\partial\Om } u^0\vp_n\cdot \nu_\Om \,\H^{d-1}
    = \int_\Om u^0\Div\vp_n + \vp_n\cdot Du^0\,dx
    \le C+\frac{1}{t}\int_\Om F(tDu^0) + F^*(\vp_n)dx,
  \]
  where $t>1$ is from \eqref{eq:condu0}, we deduce that
  $\int F^*(\vp_n)dx$ is bounded and as in the Neumann case
  we may assume, up to a subsequence, that $\vp_n$ converges
  weakly to some vector field $z\in L^1(\Om;\R^n)$ with $-\Div z=w$.
  We let again $A_n= F(Du) + F^*(\vp_n) - \vp_n\cdot Du\ge 0$, and use
   Green's formula to obtain:
  \begin{multline*}
    \int_\Om A_n\,dx 
     = \int_\Om F(Du) + F^*(\vp_n)dx
    + \int_\Om u\Div\vp_n\,dx - \int_{\partial\Om} u_0\vp_n\cdot\nu_\Om\,d\H^{d-1}
    \\= \Psi(u) + \int_\Om F^*(\vp_n)dx
    - \int_{\partial\Om} u_0\vp_n\cdot\nu_\Om\,d\H^{d-1}
    + \int_\Om u\Div\vp_n\,dx\stackrel{n\to\infty}\longrightarrow 0.
  \end{multline*}
  We conclude as in the proof of Lemma~\ref{lem:subgrad}.
\end{proof}



\end{document}